\documentclass[12pt]{amsart}
\textwidth=14.5cm \oddsidemargin=1cm \evensidemargin=1cm
\usepackage{amsmath}
\usepackage{amsxtra}
\usepackage{amscd}
\usepackage{amsthm}
\usepackage{amsfonts}
\usepackage{amssymb}
\usepackage{eucal}
\usepackage{verbatim}
\usepackage[all]{xy}
\usepackage{color}
\definecolor{deepjunglegreen}{rgb}{0.0, 0.29, 0.29}
\definecolor{darkspringgreen}{rgb}{0.09, 0.45, 0.27}

\usepackage{stmaryrd}

\makeatletter
\usepackage{etex,etoolbox,needspace}
\pretocmd\section{\Needspace*{4\baselineskip}}{}{}
\makeatletter

\usepackage[pdftex,bookmarks=false,colorlinks=true,citecolor=darkspringgreen,debug=true,
  naturalnames=true,pdfnewwindow=true]{hyperref}

\newtheorem{thm}{Theorem}[subsection]

\newtheorem{cor}[thm]{Corollary}

\newtheorem{lem}[thm]{Lemma}
\newtheorem{prop}[thm]{Proposition}
\newtheorem{conj}[thm]{Conjecture}

\theoremstyle{definition}
\newtheorem{defn}[thm]{Definition}

\theoremstyle{remark}
\newtheorem{rem}[thm]{Remark}


\newcommand{\nc}{\newcommand}
\nc{\renc}{\renewcommand} \nc{\ssec}{\subsection}
\nc{\sssec}{\subsubsection}

\nc{\on}{\operatorname} \nc{\wh}{\widehat}
\nc\ol{\overline} \nc\ul{\underline} \nc\wt{\widetilde}

\newcommand{\red}[1]{{\color{red}#1}}


\emergencystretch=2cm

\nc{\BA}{{\mathbb{A}}} \nc{\BC}{{\mathbb{C}}} \nc{\BF}{{\mathbb{F}}}
\nc{\BD}{{\mathbb{D}}} \nc{\BG}{{\mathbb{G}}} \nc{\BQ}{{\mathbb{Q}}}
\nc{\BM}{{\mathbb{M}}} \nc{\BN}{{\mathbb{N}}} \nc{\BO}{{\mathbb{O}}}
\nc{\BP}{{\mathbb{P}}} \nc{\BR}{{\mathbb{R}}}
\nc{\BZ}{{\mathbb{Z}}} \nc{\BS}{{\mathbb{S}}} \nc{\BW}{{\mathbb{W}}}

\nc{\CA}{{\mathcal{A}}} \nc{\CB}{{\mathcal{B}}} \nc{\CalC}{{\mathcal{C}}} \nc{\CalD}{{\mathcal{D}}}
\nc{\CE}{{\mathcal{E}}} \nc{\CF}{{\mathcal{F}}}
\nc{\CG}{{\mathcal{G}}} \nc{\CH}{{\mathcal{H}}}
\nc{\CI}{{\mathcal{I}}} \nc{\CK}{{\mathcal{K}}} \nc{\CL}{{\mathcal{L}}}
\nc{\CM}{{\mathcal{M}}} \nc{\CN}{{\mathcal{N}}}
\nc{\CO}{{\mathcal{O}}} \nc{\CP}{{\mathcal{P}}}
\nc{\CQ}{{\mathcal{Q}}} \nc{\CR}{{\mathcal{R}}}
\nc{\CS}{{\mathcal{S}}} \nc{\CT}{{\mathcal{T}}}
\nc{\CU}{{\mathcal{U}}} \nc{\CV}{{\mathcal{V}}}  \nc{\CY}{{\mathcal Y}}
\nc{\CW}{{\mathcal{W}}} \nc{\CZ}{{\mathcal{Z}}}

\nc{\cM}{{\check{\mathcal M}}{}} \nc{\csM}{{\check{\mathcal A}}{}}
\nc{\oM}{{\overset{\circ}{\mathcal M}}{}}
\nc{\obM}{{\overset{\circ}{\mathbf M}}{}}
\nc{\oCA}{{\overset{\circ}{\mathcal A}}{}}
\nc{\obA}{{\overset{\circ}{\mathbf A}}{}}
\nc{\ooM}{{\overset{\circ}{M}}{}}
\nc{\osM}{{\overset{\circ}{\mathsf M}}{}}
\nc{\vM}{{\overset{\bullet}{\mathcal M}}{}}
\nc{\nM}{{\underset{\bullet}{\mathcal M}}{}}
\nc{\oD}{{\overset{\circ}{\mathcal D}}{}}
\nc{\obD}{{\overset{\circ}{\mathbf D}}{}}
\nc{\oA}{{\overset{\circ}{\mathbb A}}{}}
\nc{\op}{{\overset{\bullet}{\mathbf p}}{}}
\nc{\cp}{{\overset{\circ}{\mathbf p}}{}}
\nc{\oU}{{\overset{\bullet}{\mathcal U}}{}}
\nc{\ofZ}{{\overset{\circ}{\mathfrak Z}}{}}

\nc{\ff}{{\mathfrak{f}}} \nc{\fv}{{\mathfrak{v}}}
\nc{\fa}{{\mathfrak{a}}} \nc{\fb}{{\mathfrak{b}}}
\nc{\fd}{{\mathfrak{d}}} \nc{\fe}{{\mathfrak{e}}}
\nc{\fg}{{\mathfrak{g}}} \nc{\fgl}{{\mathfrak{gl}}}
\nc{\fh}{{\mathfrak{h}}} \nc{\fri}{{\mathfrak{i}}}
\nc{\fj}{{\mathfrak{j}}} \nc{\fk}{{\mathfrak{k}}} \nc{\fl}{{\mathfrak{l}}}
\nc{\fm}{{\mathfrak{m}}} \nc{\fn}{{\mathfrak{n}}}
\nc{\ft}{{\mathfrak{t}}} \nc{\fu}{{\mathfrak{u}}}
\nc{\fw}{{\mathfrak{w}}} \nc{\fz}{{\mathfrak{z}}}
\nc{\fp}{{\mathfrak{p}}} \nc{\fq}{{\mathfrak{q}}} \nc{\frr}{{\mathfrak{r}}}
\nc{\fs}{{\mathfrak{s}}} \nc{\fsl}{{\mathfrak{sl}}}
\nc{\fso}{{\mathfrak{so}}} \nc{\fsp}{{\mathfrak{sp}}} \nc{\osp}{{\mathfrak{osp}}}
\nc{\hsl}{{\widehat{\mathfrak{sl}}}}
\nc{\hgl}{{\widehat{\mathfrak{gl}}}}
\nc{\hg}{{\widehat{\mathfrak{g}}}}
\nc{\chg}{{\widehat{\mathfrak{g}}}{}^\vee}
\nc{\hn}{{\widehat{\mathfrak{n}}}}
\nc{\chn}{{\widehat{\mathfrak{n}}}{}^\vee}

\nc{\fA}{{\mathfrak{A}}} \nc{\fB}{{\mathfrak{B}}} \nc{\fC}{{\mathfrak{C}}}
\nc{\fD}{{\mathfrak{D}}} \nc{\fE}{{\mathfrak{E}}}
\nc{\fF}{{\mathfrak{F}}} \nc{\fG}{{\mathfrak{G}}} \nc{\fH}{{\mathfrak{H}}}
\nc{\fI}{{\mathfrak{I}}} \nc{\fJ}{{\mathfrak{J}}}
\nc{\fK}{{\mathfrak{K}}} \nc{\fL}{{\mathfrak{L}}}
\nc{\fM}{{\mathfrak{M}}} \nc{\fN}{{\mathfrak{N}}}
\nc{\frP}{{\mathfrak{P}}} \nc{\fQ}{{\mathfrak{Q}}}
\nc{\fS}{{\mathfrak{S}}} \nc{\fT}{{\mathfrak{T}}} \nc{\fU}{{\mathfrak{U}}}
\nc{\fV}{{\mathfrak{V}}} \nc{\fW}{{\mathfrak{W}}}
\nc{\fX}{{\mathfrak{X}}} \nc{\fY}{{\mathfrak{Y}}}
\nc{\fZ}{{\mathfrak{Z}}}

\nc{\ba}{{\mathbf{a}}}
\nc{\bb}{{\mathbf{b}}} \nc{\bc}{{\mathbf{c}}} \nc{\be}{{\mathbf{e}}}
\nc{\bg}{{\mathbf{g}}} \nc{\bj}{{\mathbf{j}}} \nc{\bm}{{\mathbf{m}}}
\nc{\bn}{{\mathbf{n}}} \nc{\bp}{{\mathbf{p}}}
\nc{\bq}{{\mathbf{q}}} \nc{\br}{{\mathbf{r}}} \nc{\bt}{{\mathbf{t}}}
\nc{\bfu}{{\mathbf{u}}} \nc{\bv}{{\mathbf{v}}}
\nc{\bx}{{\mathbf{x}}} \nc{\by}{{\mathbf{y}}} \nc{\bz}{{\mathbf{z}}}
\nc{\bw}{{\mathbf{w}}} \nc{\bA}{{\mathbf{A}}}
\nc{\bB}{{\mathbf{B}}} \nc{\bC}{{\mathbf{C}}}
\nc{\bD}{{\mathbf{D}}} \nc{\bF}{{\mathbf{F}}} \nc{\bG}{{\mathbf{G}}}
\nc{\bH}{{\mathbf{H}}} \nc{\bI}{{\mathbf{I}}} \nc{\bJ}{{\mathbf{J}}}
\nc{\bK}{{\mathbf{K}}} \nc{\bM}{{\mathbf{M}}} \nc{\bN}{{\mathbf{N}}}
\nc{\bO}{{\mathbf{O}}} \nc{\bS}{{\mathbf{S}}} \nc{\bT}{{\mathbf{T}}}
\nc{\bU}{{\mathbf{U}}} \nc{\bV}{{\mathbf{V}}} \nc{\bW}{{\mathbf{W}}}
\nc{\bX}{{\mathbf{X}}}
\nc{\bY}{{\mathbf{Y}}} \nc{\bP}{{\mathbf{P}}}
\nc{\bZ}{{\mathbf{Z}}} \nc{\bh}{{\mathbf{h}}}

\nc{\sA}{{\mathsf{A}}} \nc{\sB}{{\mathsf{B}}}
\nc{\sC}{{\mathsf{C}}} \nc{\sD}{{\mathsf{D}}}
\nc{\sE}{{\mathsf{E}}} \nc{\sF}{{\mathsf{F}}} \nc{\sG}{{\mathsf{G}}}
\nc{\sI}{{\mathsf{I}}} \nc{\sK}{{\mathsf{K}}} \nc{\sL}{{\mathsf{L}}}
\nc{\sfm}{{\mathsf{m}}} \nc{\sM}{{\mathsf{M}}} \nc{\sN}{{\mathsf{N}}}
\nc{\sO}{{\mathsf{O}}} \nc{\sQ}{{\mathsf{Q}}} \nc{\sP}{{\mathsf{P}}}
\nc{\sT}{{\mathsf{T}}} \nc{\sZ}{{\mathsf{Z}}}
\nc{\sV}{{\mathsf{V}}} \nc{\sW}{{\mathsf{W}}}
\nc{\sfp}{{\mathsf{p}}} \nc{\sq}{{\mathsf{q}}} \nc{\sr}{{\mathsf{r}}}
\nc{\sfs}{{\mathsf{s}}} \nc{\st}{{\mathsf{t}}} \nc{\sfb}{{\mathsf{b}}}
\nc{\sfc}{{\mathsf{c}}} \nc{\sd}{{\mathsf{d}}}
\nc{\sz}{{\mathsf{z}}}

\nc{\tA}{{\widetilde{\mathbf{A}}}}
\nc{\tB}{{\widetilde{\mathcal{B}}}}
\nc{\tg}{{\widetilde{\mathfrak{g}}}} \nc{\tG}{{\widetilde{G}}}
\nc{\TM}{{\widetilde{\mathbb{M}}}{}}
\nc{\tO}{{\widetilde{\mathsf{O}}}{}}
\nc{\tU}{{\widetilde{\mathfrak{U}}}{}} \nc{\TZ}{{\tilde{Z}}}
\nc{\tx}{{\tilde{x}}} \nc{\tbv}{{\tilde{\bv}}}
\nc{\tfP}{{\widetilde{\mathfrak{P}}}{}} \nc{\tz}{{\tilde{\zeta}}}
\nc{\tmu}{{\tilde{\mu}}}

\nc{\urho}{\underline{\rho}} \nc{\uB}{\underline{B}}
\nc{\uC}{{\underline{\mathbb{C}}}} \nc{\ui}{\underline{i}}
\nc{\uj}{\underline{j}} \nc{\ofP}{{\overline{\mathfrak{P}}}}
\nc{\oB}{{\overline{\mathcal{B}}}}
\nc{\og}{{\overline{\mathfrak{g}}}} \nc{\oI}{{\overline{I}}}

\nc{\eps}{\varepsilon} \nc{\hrho}{{\hat{\rho}}}
\nc{\blambda}{{\boldsymbol{\lambda}}} \nc{\bmu}{{\boldsymbol{\mu}}} \nc{\bnu}{{\boldsymbol{\nu}}}
\nc{\btheta}{{\boldsymbol{\theta}}} \nc{\bzeta}{{\boldsymbol{\zeta}}} \nc{\bta}{{\boldsymbol{\eta}}}

\nc{\one}{{\mathbf{1}}} \nc{\two}{{\mathbf{t}}}

\nc{\Sym}{\mathop{\operatorname{\rm Sym}}}
\nc{\Tot}{{\mathop{\operatorname{\rm Tot}}}}
\nc{\Spec}{\mathop{\operatorname{\rm Spec}}}
\nc{\Ker}{{\mathop{\operatorname{\rm Ker}}}}
\nc{\Isom}{{\mathop{\operatorname{\rm Isom}}}}
\nc{\Hilb}{{\mathop{\operatorname{\rm Hilb}}}}
\nc{\deeq}{{\mathop{\operatorname{\rm deeq}}}}
\nc{\End}{{\mathop{\operatorname{\rm End}}}}
\nc{\Ext}{{\mathop{\operatorname{\rm Ext}}}}
\nc{\Hom}{{\mathop{\operatorname{\rm Hom}}}}
\nc{\CHom}{{\mathop{\operatorname{{\mathcal{H}}\it om}}}}
\nc{\GL}{{\mathop{\operatorname{\rm GL}}}}
\nc{\SL}{{\mathop{\operatorname{\rm SL}}}}
\nc{\SO}{{\mathop{\operatorname{\rm SO}}}}
\nc{\Sp}{{\mathop{\operatorname{\rm Sp}}}}
\nc{\OSp}{{\mathop{\operatorname{\rm SOSp}}}}
\nc{\gr}{{\mathop{\operatorname{\rm gr}}}}
\nc{\Id}{{\mathop{\operatorname{\rm Id}}}}
\nc{\perf}{{\mathop{\operatorname{\rm perf}}}}
\nc{\defi}{{\mathop{\operatorname{\rm def}}}}
\nc{\length}{{\mathop{\operatorname{\rm length}}}}
\nc{\supp}{{\mathop{\operatorname{\rm supp}}}}
\nc{\HC}{{\mathcal H}{\mathcal C}}

\nc{\pr}{{\operatorname{pr}}}
\nc{\Cliff}{{\mathsf{Cliff}}}
\nc{\loc}{{\operatorname{loc}}}
\nc{\Fl}{{\mathbf{Fl}}} \nc{\Ffl}{{\mathcal{F}\ell}}
\nc{\Fib}{{\mathsf{Fib}}}
\nc{\Coh}{{\mathsf{Coh}}} \nc{\FCoh}{{\mathsf{FCoh}}}
\nc{\Perf}{{\mathsf{Perf}}}
\nc{\wtimes}{\mathbin{\widetilde\times}}

\nc{\reg}{{\text{\rm reg}}}
\nc{\self}{{\text{\rm self}}}
\nc{\gvee}{{\mathfrak g}^{\!\scriptscriptstyle\vee}}
\nc{\tvee}{{\mathfrak t}^{\!\scriptscriptstyle\vee}}
\nc{\nvee}{{\mathfrak n}^{\!\scriptscriptstyle\vee}}
\nc{\bvee}{{\mathfrak b}^{\!\scriptscriptstyle\vee}}
       \nc{\rhovee}{\rho^{\!\scriptscriptstyle\vee}}

\nc{\cplus}{{\mathbf{C}_+}} \nc{\cminus}{{\mathbf{C}_-}}
\nc{\cthree}{{\mathbf{C}_*}} \nc{\Qbar}{{\bar{Q}}}


\newcommand\iso{\mathbin{\vphantom{j^{X^2}}\smash{\overset{\sim}{\vphantom{\rule{0pt}{0.20em}}\smash{\longrightarrow}}}}}
\nc{\Gtimes}{\vphantom{j^{X^2}}\smash{\overset{G}{\vphantom{\rule{0pt}{0.30em}}\smash{\times}}}}
\nc{\sGtimes}{\vphantom{j^{X^2}}\smash{\overset{\mathsf G}{\vphantom{\rule{0pt}{0.30em}}\smash{\times}}}}

\nc{\bOmega}{{\overline{\Omega}}}

\nc{\seq}[1]{\stackrel{#1}{\sim}}

\nc{\aff}{{\operatorname{aff}}}
\nc{\fin}{{\operatorname{fin}}}
\nc{\mir}{{\operatorname{mir}}}
\nc{\triv}{{\operatorname{triv}}}
\nc{\ext}{{\operatorname{ext}}}
\nc{\righ}{{\operatorname{right}}}
\nc{\lef}{{\operatorname{left}}}
\nc{\forg}{{\operatorname{forg}}}
\nc{\fid}{{\operatorname{fd}}}
\nc{\odd}{{\operatorname{odd}}}
\nc{\even}{{\operatorname{even}}}
\nc{\modu}{{\operatorname{-mod}}}
\nc{\Gr}{{\mathbf{Gr}}}
\nc{\FT}{{\operatorname{FT}}}
\nc{\Mat}{{\operatorname{Mat}}}
\nc{\MSt}{{\operatorname{MSt}}}
\nc{\sph}{{\operatorname{sph}}}
\nc{\GR}{{\mathbf{Gr}}}
\nc{\Perv}{{\operatorname{Perv}}}
\nc{\Rep}{{\operatorname{Rep}}}
\nc{\Ind}{{\operatorname{Ind}}}
\nc{\IC}{{\operatorname{IC}}}
\nc{\Bun}{{\operatorname{Bun}}}
\nc{\Proj}{{\operatorname{Proj}}}
\nc{\Stab}{{\operatorname{Stab}}}
\nc{\pt}{{\operatorname{pt}}}
\nc{\bfmu}{{\boldsymbol{\mu}}}
\nc{\bfomega}{{\boldsymbol{\omega}}}
\nc{\calM}{\mathcal M}
\nc{\calA}{\mathcal A}
\nc{\calO}{\mathcal O}
\nc{\CC}{\mathbb C}
\nc{\calN}{\mathcal N}
\nc{\grg}{\mathfrak g}
\nc{\dslash}{/\!\!/}
\nc{\tslash}{/\!\!/\!\!/}
\nc\grt{\mathfrak t}
\nc\bfM{\mathbf M}
\nc\bfN{\mathbf N}
\nc\Sig{\Sigma}
\nc\ZZ{\mathbb{Z}}
\nc\calC{\mathcal C}
\nc\calF{\mathcal F}
\nc\calX{\mathcal X}
\nc\calY{\mathcal Y}
\nc\QCoh{\operatorname{QCoh}}
\nc\IndCoh{\operatorname{IndCoh}}
\nc\Maps{\operatorname{Maps}}
\nc\Dmod{D-\operatorname{mod}}
\newcommand\Hecke{\operatorname{Hecke}}
\nc{\calD}{\mathcal D}
\nc\bfO{\mathbf O}

\nc\GG{\mathbb G}
\nc\calK{\mathcal K}
\nc{\calG}{\mathcal G}
\nc\RHom{\operatorname{RHom}}
\nc\Res{\operatorname{Res}}
\nc\Av{\operatorname{Av}}
\nc\grs{\mathfrak s}
\nc{\tilX}{\widetilde X}
\nc\calB{\mathcal B}
\nc\calS{\mathcal S}
\nc\calT{\mathcal T}
\nc\calZ{\mathcal Z}
\nc\LS{\operatorname{LocSys}}
\nc\bfL{\on{\mathbf L}}

\newcommand*\circled[1]
{*{#1}}
%
%
%
%
%
%

\makeatletter
\newcommand{\raisemath}[1]{\mathpalette{\raisem@th{#1}}}
\newcommand{\raisem@th}[3]{\raisebox{#1}{$#2#3$}}
\nc{\binlim}[2][]{\def\@tempa{#1}\@ifnextchar^{\@binlim{#2}}{\@binlim{#2}^{}}}
\def\@binlim#1^#2{\mathbin{\@ifempty{#2}{\mathop{#1}}{\mathop{#1}\@xp\displaylimits\@tempa^{#2}}}}

\makeatother

\nc\cX{{\mathcal X}}

\newcommand{\dbkts}[1]{[\![#1]\!]}
\newcommand{\dprts}[1]{(\!(#1)\!)}
\nc\Gm{{\mathbb G_m}}
\nc{\isoto}
    {\stackrel{\displaystyle\raisemath{-.2ex}{\sim}}\to}
\nc\cD\CalD
\nc\setm\setminus
\nc\cE\CE
\renc\Hecke{\mathit{\CH\kern-.2ex ecke}}
\nc\bsl\backslash
\nc\Fq{\mathbb F_q}
\nc\x\times
\nc\bGO{{\bG_\bO}}
\nc\bla\blambda
\nc\blt\bullet
\nc\opp{{\on{op}}}
\nc\srel\stackrel
\nc\oast\circledast
\nc\tbx{\binlim{\widetilde\boxtimes{}}}
\nc\into\hookrightarrow
\nc\otto\leftrightarrow
\renc\setminus\smallsetminus
\nc\phitau{\varphi\tau}
\newenvironment{i-ii-iii}{%
\begin{enumerate}
}%
{\end{enumerate}}
\nc\ceil[1]{\lceil#1\rceil}  \nc\floor[1]{\lfloor#1\rfloor}
\nc\Lie{\on{Lie}}
\nc\sS{{\mathsf S}}
\nc\vvv{\ensuremath{\red\surd}}
\nc\la\lambda
 \let\arXiv\arxiv
\nc\kap{\kappa}
\nc\gra{\mathfrak a}
\nc\gl{\mathfrak{gl}}
\nc\sTr{\operatorname{sTr}}
\nc\hatG{\widehat{G}}
\nc\calL{\mathcal L}
\nc\Whit{\operatorname{Whit}}
\nc\KL{\operatorname{KL}}

\newcommand\Killing{\operatorname{Killing}}

\makeatletter
\renewcommand{\subsection}{\@startsection{subsection}{2}{0pt}{-3ex
plus -1ex minus -0.2ex}{-2mm plus -0pt minus
-2pt}{\normalfont\bfseries}} \makeatother

\numberwithin{equation}{subsection}
\allowdisplaybreaks
\nc\mto{\mapsto }
\nc\en{\enspace }

\begin{document}

\author[A.Braverman]{Alexander Braverman}
\address{Department of Mathematics, University of Toronto and Perimeter Institute
of Theoretical Physics, Waterloo, Ontario, Canada, N2L 2Y5;
\newline Skolkovo Institute of Science and Technology}
\email{braval@math.toronto.edu}

\author[M.Finkelberg]{Michael Finkelberg}
\address{National Research University Higher School of Economics, Russian Federation,
  Department of Mathematics, 6 Usacheva st, 119048 Moscow;
\newline Skolkovo Institute of Science and Technology;
\newline Institute for the Information Transmission Problems}
\email{fnklberg@gmail.com}


\author[R.Travkin]{Roman Travkin}
\address{Skolkovo Institute of Science and Technology, Moscow, Russia}
\email{roman.travkin2012@gmail.com}

\title
{Orthosymplectic Satake equivalence}
\dedicatory{To the memory of Elena V.~Glivenko}




\begin{abstract}
  This is a companion paper of~\cite{bfgt}.
  We prove an equivalence relating representations of a degenerate orthosymplectic supergroup
  with the category of $\SO(N-1,\BC[\![t]\!])$-equivariant perverse sheaves on the affine
  Grassmannian of $\SO_N$. We explain how this equivalence fits into a more general framework
  of conjectures due to Gaiotto and to Ben-Zvi, Sakellaridis and Venkatesh.
\end{abstract}

\maketitle

\tableofcontents

\section{Introduction}
\label{intro}

\subsection{Reminder on~\cite{bfgt}}
\label{recall}
Recall one of the results of~\cite{bfgt}. We consider the Lie superalgebra $\fgl(N-1|N)$
of endomorphisms of a super vector space
$\BC^{N-1|N}$, and the corresponding algebraic supergroup $\GL(N-1|N)=\on{Aut}(\BC^{N-1|N})$.
We also consider a degenerate version $\ul\fgl(N-1|N)$ where the
supercommutator of the even elements (with even or odd elements) is the same as in $\fgl(N-1|N)$,
while the supercommutator of any two odd elements is set to be zero.
In other words, the even part $\ul\fgl(N-1|N){}_{\bar0}=\fgl_{N-1}\oplus\fgl_N$ acts naturally on the
odd part $\ul\fgl(N-1|N){}_{\bar1}=\Hom(\BC^{N-1},\BC^N)\oplus\Hom(\BC^N,\BC^{N-1})$, but the
supercommutator $\ul\fgl(N-1|N){}_{\bar1}\times\ul\fgl(N-1|N){}_{\bar1}\to\ul\fgl(N-1|N){}_{\bar0}$
equals zero.

The category of
finite dimensional representations of the corresponding supergroup $\ul\GL(N-1|N)$ (in vector
superspaces) is denoted $\Rep(\ul\GL(N-1|N))$, and its bounded derived category is denoted
$D^b\Rep(\ul\GL(N-1|N))$. In~\cite{bfgt} we construct an equivalence $\Psi$ from
$D^b\Rep(\ul\GL(N-1|N))$ to
the bounded equivariant derived constructible category $SD^b_{\GL(N-1,\bO)}(\Gr_{\GL_N})$ with
coefficients in vector superspaces.
Here $\bO=\BC\dbkts{t}\subset\BC\dprts{t}=\bF$, and $\Gr_{\GL_N}=\GL(N,\bF)/\GL(N,\bO)$.
This equivalence enjoys the following favorable properties, reminiscent of the classical
geometric Satake equivalence (e.g.\ $\Rep(\GL_N)\iso\Perv_{\GL(N,\bO)}(\Gr_{\GL_N})$):

(i) $\Psi$ is exact with respect to the tautological $t$-structure on $D^b\Rep(\ul\GL(N-1|N))$ with
the heart $\Rep(\ul\GL(N-1|N))$ and the perverse $t$-structure on $SD^b_{\GL(N-1,\bO)}(\Gr_{\GL_N})$
with the heart $S\Perv_{\GL(N-1,\bO)}(\Gr_{\GL_N})$.

(ii) $\Psi$ takes the tensor product of $\ul\GL(N-1|N)$-modules to the fusion product~$\star$
on $SD^b_{\GL(N-1,\bO)}(\Gr_{\GL_N})$.

As a corollary, we derive an equivalence $SD^b_{\GL(N-1,\bO)}(\Gr_{\GL_N})\simeq
D^b\big(S\Perv_{\GL(N-1,\bO)}(\Gr_{\GL_N})\big)$ in sharp contrast with the classical geometric
Satake category, where e.g.\ $\Perv_{\GL(N,\bO)}(\Gr_{\GL_N})$ is semisimple, and its derived
category $D^b\big(\Perv_{\GL(N,\bO)}(\Gr_{\GL_N})\big)$ is not equivalent to
$D^b_{\GL(N,\bO)}(\Gr_{\GL_N})$.

The equivalence $\Psi$ was obtained in~\cite{bfgt} as a byproduct of a construction of a
similar equivalence for the mirabolic affine Grassmannian. In case $N=2$, the equivalence
$\Psi$ was constructed earlier in~\cite{br} in a much more direct way.

\subsection{Orthosymplectic Satake equivalence}
\label{OSE}
One of the goals of the present paper is to generalize the direct approach of~\cite{br} to
the study of $SD^b_{\SO(N-1,\bO)}(\Gr_{\SO_N})$
(note that $\SO_2\simeq\GL_1$, and $\SO_3\simeq\on{PGL}_2$).\footnote{In fact, this generalization
  works similarly for the original problem: for the general linear group $\GL$ in place of the
  special orthogonal group $\SO$.}
The corresponding supergroup turns out to be a degeneration $\ul\sG$ of an orthosymplectic algebraic
supergroup $\sG$ whose even part $\sG_{\bar0}$ is the Langlands dual of $\SO_{N-1}\times\SO_N$.
In order to describe it explicitly we will distinguish two cases, depending on parity of $N$.
Throughout the paper we assume $N\geq3$.

(a) {\em odd:} If $N=2n+1$, we set
$V_0=\BC^{2n}$ equipped with a nondegenerate symmetric bilinear form $(,)$, and
$V_1=\BC^{2n}$ equipped with a nondegenerate skew-symmetric bilinear form $\langle,\rangle$.

(b) {\em even:} If $N=2n$, we set
$V_0=\BC^{2n}$ equipped with a nondegenerate symmetric bilinear form $(,)$, and
$V_1=\BC^{2n-2}$ equipped with a nondegenerate skew-symmetric bilinear form $\langle,\rangle$.

We consider the Lie superalgebra $\fgl(V_0|V_1)$ of endomorphisms of a super vector space
$V_0\oplus\Pi V_1$, and the corresponding algebraic supergroup $\GL(V_0|V_1)$. The super vector space
$V_0\oplus\Pi V_1$ is equipped with the bilinear form $(,)\oplus\langle,\rangle$, and the
orthosymplectic Lie superalgebra $\bg:=\osp(V_0|V_1)\subset\fgl(V_0|V_1)$ is formed by all the
endomorphisms preserving the above bilinear form
(in  the Lie superalgebra sense). The corresponding algebraic  supergroup
$\sG:=\OSp(V_0|V_1)\subset\GL(V_0|V_1)$, by definition, has the even part
$\sG_{\bar0}=\SO(V_0)\times\Sp(V_1)$. Accordingly, the even part $\bg_{\bar0}=\fso(V_0)\oplus\fsp(V_1)$
acts naturally on the odd part $\bg_{\bar1}=V_0\otimes\Pi V_1$.

We also consider a degenerate version $\ul\bg=\ul\osp(V_0|V_1)$ where the supercommutator of the
even elements (with even or odd elements) is the same as in $\osp(V_0|V_1)$, while the
supercommutator of any two odd elements is set to be zero. The corresponding Lie supergroup
is denoted $\ul\sG=\ul\OSp(V_0|V_1)$; its even part is equal to
$\ul\sG{}_{\bar0}=\sG_{\bar0}=\SO(V_0)\times\Sp(V_1)$.

The category of finite dimensional representations of $\ul\sG$ (in super vector spaces) is
denoted $\Rep(\ul\sG)$, and its bounded derived category is denoted $D^b\Rep(\ul\sG)$.

In our main Theorem~\ref{main} we construct an equivalence $\Xi$ from $D^b\Rep(\ul\OSp(V_0|V_1))$ to
the bounded equivariant derived constructible category $SD^b_{\SO(N-1,\bO)}(\Gr_{\SO_N})$ with
coefficients in vector superspaces. This equivalence enjoys the favorable properties similar
to the properties of the equivalence $\Psi$ of~\S\ref{recall}:

(i) $\Xi$ is exact with respect to the tautological $t$-structure on $D^b\Rep(\ul\OSp(V_0|V_1))$
with the heart $\Rep(\ul\OSp(V_0|V_1))$ and the perverse $t$-structure on
$SD^b_{\SO(N-1,\bO)}(\Gr_{\SO_N})$ with the heart $S\Perv_{\SO(N-1,\bO)}(\Gr_{\SO_N})$.

(ii) $\Xi$ takes the tensor product of $\ul\OSp(V_0|V_1)$-modules to the fusion product~$\star$
on $SD^b_{\SO(N-1,\bO)}(\Gr_{\SO_N})$.

\begin{rem}
  One of the key ingredients in the proof of Theorem~\ref{main} is Ginzburg's theorem~\cite{g2}
  identifying the (equivariant) Exts between IC-sheaves on a variety $X$ with the
  homomorphisms over the (equivariant) cohomology ring of $X$ between the (equivariant)
  cohomology of $X$ with coefficients in the above IC-sheaves. One of the necessary conditions
  for Ginzburg's theorem is the existence of a cellular decomposition of $X$ such that the
  IC-sheaves in question are smooth along cells. A standard application of Ginzburg's theorem
  is to $\SO(N,\bO)$-equivariant IC-sheaves on $\Gr_{\SO_N}$. But in our situation there
  is {\em no} cellular decomposition of $\Gr_{\SO_N}$ such that all the $\SO(N-1,\bO)$-equivariant
  IC-sheaves are smooth along cells. However, our proof of~Theorem~\ref{main}
  establishes along the way Ginzburg's theorem {\em a posteriori}.
\end{rem}

\subsection{Conjectures of Ben-Zvi, Sakellaridis and Venkatesh}
By definition of the degenerate orthosymplectic algebra $\ul\bg=\ul\osp(V_0|V_1)$,
its odd part $\bg_{\bar1}$ is a Lie superalgebra with trivial supercommutator, so that its
universal enveloping algebra is a (finite-dimensional) exterior algebra $\Lambda$.
The derived category $D\Rep(\ul\sG)$ is nothing but the derived category
$SD_\fid^{\sG_{\bar0}}(\Lambda)$ of finite dimensional $\sG_{\bar0}$-equivariant super dg-modules
over $\Lambda$ (viewed as a dg-algebra with trivial differential).
There is a Koszul equivalence
$SD_\perf^{\sG_{\bar0}}(\fG^\bullet)\iso SD_\fid^{\sG_{\bar0}}(\Lambda)\cong D^b\Rep(\ul\sG)$
where $\fG^\bullet=\Sym(\bg_{\bar1}[-1])$
(we use  the trace paring to identify $\bg_{\bar1}$ with $\bg_{\bar1}^*$)
 is a dg-algebra with trivial differential,
and $SD_\perf^{\sG_{\bar0}}(\fG^\bullet)$ stands for the derived category of $\sG_{\bar0}$-equivariant
perfect dg-modules over $\fG^\bullet$. Precomposing the equivalence
$\Xi\colon D^b\Rep(\ul\OSp(V_0|V_1))\iso SD^b_{\SO(N-1,\bO)}(\Gr_{\SO_N})$ with the Koszul equivalence,
we obtain an equivalence
$\Phi\colon SD_\perf^{\sG_{\bar0}}(\fG^\bullet)\iso SD^b_{\SO(N-1,\bO)}(\Gr_{\SO_N})$.

One advantage of $\Phi$ (over $\Xi$) is that it admits a straightforward quantization $\Phi_\hbar$
describing the category $SD^b_{\SO(N-1,\bO)\rtimes\BC^\times}(\Gr_{\SO_N})$ with equivariance extended by the
loop rotations, see~Theorem~\ref{quantum}.

Another advantage is that the subcategory of
$SD^b_{\SO(N-1,\bO)\rtimes\BC^\times}(\Gr_{\SO_N})$ formed by all the objects that are compact as the
objects of {\em unbounded} category $SD_{\SO(N-1,\bO)\rtimes\BC^\times}(\Gr_{\SO_N})$ is obtained by
applying $\Phi$ to the subcategory of $SD_\perf^{\sG_{\bar0}}(\fG^\bullet)$ formed by all the
objects with the nilpotent support condition, see~Theorem~\ref{main}.

In yet another direction, as explained in~\cite[\S1.7]{bfgt}, this equivalence is an instance of
the Periods---$L$-functions duality conjectures of D.~Ben-Zvi, Y.~Sakellaridis and A.~Venkatesh.
Their conjectures predict, among other things, that given a reductive group $\on{G}$ and its
spherical homogeneous variety $X=\on{G}/\on{H}$, there is a subgroup
$\on{G}^\vee_X\subset\on{G}^\vee$, its
graded representation $V^\vee_X=\bigoplus_{i\in\BZ}V^\vee_{X,i}[i]$, and an equivalence
$D\Coh(V^\vee_X/\on{G}^\vee_X)=D\Coh\big((\bigoplus_{i\in\BZ}V^\vee_{X,i}[i])/\on{G}^\vee_X\big)
\simeq D_{\on{G}(\bO)}(X(\bF))$.
For a partial list of examples, see the table at the end of~\cite{sa}. The relevant
representations $V^\vee_X$ (constructed in terms of the Luna diagram of $X$) can be
read off from the 4-th column of the table.

It turns out that the case of Example~14 of~\cite{sa} is the above equivalence $\Phi$,
or rather its version with coefficients in usual vector spaces (as opposed to super vector
spaces) $D_\perf^{\sG_{\bar0}}(\fG^\bullet)\iso D^b_{\SO(N-1,\bO)}(\Gr_{\SO_N})$.
To explain this, let $\on{G}:=\SO_{N-1}\times\SO_N$ and
$\on{H}:=\SO_{N-1}$. We view $\on{H}$ as a block-diagonal subgroup of $\on{G}$
and put $X=\on{G}/\on{H}$.
Then loosely speaking we have $D_{\SO(N-1,\bO)}(\Gr_{\SO_{N}})\simeq
D\big(\SO(N-\nolinebreak1,\bO)\backslash\SO(N,\bF)/\SO(N,\bO)\big)\simeq
D\big(\on{G}(\bO)\backslash\on{G}(\bF)/\on{H}(\bF)\big)\simeq
D\big(\on{G}(\bO)\backslash X(\bF)\big)\simeq D_{\on{G}(\bO)}(X(\bF))$.
On the other hand, note that $\on{G}^\vee=\SO(V_0)\times\Sp(V_1)$.
We consider a graded $\on{G}^\vee$-module
$V^\vee_X:=(V_0\otimes V_1)[1]$ (we view $V^\vee_X$ as an
  {\em odd} vector space placed in cohomological degree $-1$).
Hence, the equivalence $\Phi$ takes the form
$D\Coh(V^\vee_X/\on{G}^\vee)\simeq D_{\on{G}(\bO)}(X(\bF))$.

\subsection{Conjectural Iwahori-equivariant version}
Similarly to~\cite[\S1.4]{bfgt} we propose the following conjecture.
Let $\Ffl_1$ denote the variety of complete self-orthogonal flags in $V_1$, and let
$\Ffl_0$ denote a connected component of the variety of complete self-orthogonal flags in $V_0$
(there are two canonically isomorphic connected components, and we choose one).
We consider a dg-scheme with trivial differential
\[\on{H}_\osp:=(V_0\otimes V_1)[1]\times\Ffl_0\times\Ffl_1.\]
Here we view $V_0\otimes V_1$ as an {\em odd} vector space, so that the functions on
$(V_0\otimes V_1)[1]$ (with grading disregarded) form really a symmetric (infinite-dimensional)
algebra, not an exterior algebra. We will write $A$ for an element of
$V_0\otimes V_1\cong\Hom(V_0,V_1)$, and $A^t$ for the adjoint operator in $\Hom(V_1,V_0)$.
We will also write
$F_i=(F_i^{(1)}\subset F_i^{(2)}\subset\ldots\subset F_i^{(\dim V_i)}=V_i)$ for an element of
$\Ffl_i,\ i=0,1$.

We define the {\em orthosymplectic Steinberg scheme} to be a dg-subscheme $\on{St}_\osp$ of
$\on{H}_\osp$ cut out by the equations saying that the flag $F_0$ is stable under the composition
$A^tA$ and  the flag $F_1$ is stable under the composition
$AA^t$. Thus the orthosymplectic Steinberg scheme is a shifted variety of triples:
\[
\on{St}_\osp=\{(A,F_0,F_1)\in \on{H}_\osp\mid
A^tA(F^{(r)}_0)\subseteq F^{(r)}_0\en \& \en
AA^t(F^{(r)}_1)\subseteq F^{(r)}_1,\ \forall r\}.
\]
Let $\bI_{N-1}\subset\SO(N-1,\bO)$ (resp.\ $\bI_N\subset\SO(N,\bO)$) be an Iwahori subgroup
and let $\Fl_{\SO_N}:=\SO(N,\bF)/\bI_N$ be the affine flag variety.
Let $D^b_{\bI_{N-1}}(\Fl_{\SO_N})$ be the bounded $\bI_{N-1}$-equivariant constructible derived
category of $\Fl_{\SO_N}$. We propose the following
\begin{conj} There exists an equivalence of triangulated categories
\[
D^{\SO(V_0)\times\Sp(V_1)}  \on{Coh}(\on{St}_\osp)
\cong D^b_{\bI_{N-1}}(\Fl_{\SO_N}).
\]
\end{conj}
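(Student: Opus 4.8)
The plan is to deduce the Conjecture from the spherical equivalence $\Xi$ (equivalently $\Phi$) of \thmref{main} by upgrading it to the Iwahori level, just as Bezrukavnikov's theorem on the affine Hecke category --- which realizes $D^b_{\bI}(\Fl_G)$ as $G^\vee\times\Gm$-equivariant coherent sheaves on the Steinberg variety $\widetilde{\fg}^\vee\times_{\fg^\vee}\widetilde{\fg}^\vee$ --- refines geometric Satake; the present statement is the ``$X=\on{G}/\on{H}$''-counterpart of that picture, and an analogue of the conjecture of \cite[\S1.4]{bfgt} for the general linear group. The first task is to recognise $\on{St}_\osp$ as the correct relative Steinberg scheme: it fibers over $(V_0\otimes V_1)[1]=V^\vee_X$, which under $\Phi$ is the coherent incarnation of $\Gr_{\SO_N}$ with its $\SO(N-1,\bO)$-equivariance; its fiber over the origin is $\Ffl_0\times\Ffl_1=\CB_{\sG_{\bar0}}$; and the two flag factors $\Ffl_0=\CB_{\SO(V_0)}$ and $\Ffl_1=\CB_{\Sp(V_1)}$ are the flag varieties of the Langlands duals of $\SO_{N-1}$ and $\SO_N$, matching the left and right sides of $\bI_{N-1}\backslash\SO(N,\bF)/\bI_N$ (which factor corresponds to which side depends on the parity of $N$).

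I would construct the equivalence in two stages. \emph{Stage 1} adds the flag matched with the fibration $\Fl_{\SO_N}\to\Gr_{\SO_N}$: combining $\Phi$ with Gaitsgory's central (nearby-cycle) functor for $\SO_N$, one identifies $SD^b_{\SO(N-1,\bO)}(\Fl_{\SO_N})$ with $D^{\sG_{\bar0}}\on{Coh}$ of the dg-scheme $\on{St}'_\osp$ obtained from $\on{St}_\osp$ by dropping the other flag and its stability condition (e.g.\ $\on{St}'_\osp=\{(A,F_1)\mid AA^t(F_1^{(r)})\subseteq F_1^{(r)}\ \forall r\}$ when $N$ is odd); the Wakimoto sheaves go over to line bundles pulled back from the retained flag factor, while monoidality of $\Phi$ (property (ii) of \thmref{main}) supplies the convolution structure needed to build and control the functor. \emph{Stage 2} refines the equivariance from $\SO(N-1,\bO)$ to $\bI_{N-1}$: this is the Arkhipov--Bezrukavnikov--Ginzburg / Bezrukavnikov ``two realizations'' mechanism applied now to the other dual pair, attached to $\SO_{N-1}$, and on the coherent side it amounts to reinstating the dropped flag, i.e.\ passing from $\on{St}'_\osp$ back to $\on{St}_\osp$. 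Along the way one matches the standard and costandard objects $\Delta_w,\nabla_w$ of $D^b_{\bI_{N-1}}(\Fl_{\SO_N})$ with line-bundle twists of structure sheaves of the ``Schubert'' sub-dg-schemes of $\on{St}_\osp$, and compares the resulting $\Ext$-algebras on the two sides.

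As in the proof of \thmref{main}, the equivalence will \emph{not} be perverse-$t$-exact: instead one transports Bezrukavnikov's perverse-coherent $t$-structure on $D^{\sG_{\bar0}}\on{Coh}(\on{St}_\osp)$ (for the stratification by $\sG_{\bar0}$-orbit type) to an exotic $t$-structure on $D^b_{\bI_{N-1}}(\Fl_{\SO_N})$, and the heart of this $t$-structure has to be identified with the relevant abelian category. Since there is no cellular decomposition of $\Fl_{\SO_N}$ along which all $\bI_{N-1}$-equivariant IC-sheaves are smooth, Ginzburg's theorem \cite{g2} is not available as an input and, just as in the spherical case, must be obtained \emph{a posteriori} by computing $\Ext$-algebras of IC-sheaves through the equivalence. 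It is probably cleanest to prove the loop-rotation-equivariant statement first: the enhancement $\Phi_\hbar$ of \thmref{quantum} provides the $\Gm$-action on $\on{St}_\osp$, and the graded side is more rigid, so one proves the $\Gm$-equivariant equivalence and then specialises.

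The main obstacle is Stage 2 together with the compatibility of the two stages. One must run the affine-Hecke-category technology in the genuinely \emph{relative} situation, where the left group $\SO_{N-1}$ and the right group $\SO_N$ are different --- so that $\Ffl_0$ and $\Ffl_1$ are flag varieties of different types --- and check that the $\SO_N$-central construction of Stage 1 commutes with the $\SO_{N-1}$-monodromy construction of Stage 2 over the common base $(V_0\otimes V_1)[1]/\sG_{\bar0}$, all compatibly with the exterior-algebra (super) structure coming from the degeneration of $\sG$ to $\ul\sG$. Even in the classical, non-relative situation this circle of ideas is a hard theorem; here one must in addition bootstrap Ginzburg's theorem at the Iwahori level, where the failure of the smooth-along-cells hypothesis is more severe than in the spherical case --- so the equivalence and the relevant $\Ext$-computations have to be established simultaneously rather than in linear order.
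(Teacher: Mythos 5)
The statement you are addressing is not proved in the paper at all: it appears there as a Conjecture, and the authors' only further comment on it is that it would give an alternative proof of \thmref{ic stalk} via the orthosymplectic Kostka polynomials of \S\ref{kostka}. So there is no proof of record to compare against; the only question is whether your argument closes the conjecture, and it does not. What you have written is a program --- a plausible and well-motivated one, correctly modelled on the Arkhipov--Bezrukavnikov--Ginzburg and Bezrukavnikov two-realizations picture, and consistent with how the authors set up the geometry ($\on{St}_\osp$ as a relative Steinberg scheme over $(V_0\otimes V_1)[1]/\sG_{\bar0}$, the two flag factors $\Ffl_0,\Ffl_1$ matched with the two sides of $\bI_{N-1}\backslash\SO(N,\bF)/\bI_N$, the $\Gm$-deformation via \thmref{quantum}) --- but every step that carries actual mathematical weight is deferred.

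Concretely, the gaps are these. In Stage 1, identifying $SD^b_{\SO(N-1,\bO)}(\Fl_{\SO_N})$ with equivariant coherent sheaves on the partial Steinberg scheme is not a formal consequence of \thmref{main} plus Gaitsgory's central functor: the proof of \thmref{main} runs through the formality and injectivity statements (Lemma~\ref{formality}, Lemma~\ref{inj}), whose purity inputs rest on resolving $\IC^{\lambda_\sfs}_{\lambda_\sfb}$ by convolutions of minuscule spherical IC-sheaves and on Bialynicki-Birula cell decompositions of convolution fibers; none of this transfers to the Iwahori-equivariant setting, and you supply no substitute computation of the relevant deequivariantized Ext-algebra. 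Stage 2 is the full relative two-realizations theorem for the spherical pair $\SO_{N-1}\subset\SO_{N-1}\times\SO_N$, with flag varieties of different types on the two sides; you give no construction of the functor, no identification of generators (Wakimoto or otherwise) beyond asserting that one should exist, and no mechanism for matching $\Delta_w,\nabla_w$ with specified complexes on $\on{St}_\osp$. Your own closing admission --- that the equivalence and the Ext-computations must be ``established simultaneously'' and that Ginzburg's theorem must be bootstrapped a posteriori at the Iwahori level --- is precisely the content of the conjecture, not a step toward its proof. As it stands, the statement remains open, and your text should be presented as a strategy, not as a proof.
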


This conjecture would give an alternative proof of~Theorem~\ref{ic stalk} expressing the stalks of
$\SO(N-1,\bO)$-equivariant IC-sheaves on $\Gr_{\SO_N}$ in terms of {\em orthosymplectic
  Kostka polynomials} introduced in~\S\ref{kostka} as a particular case of general
construction due to D.~Panyushev~\cite{p}.

\subsection{Gaiotto conjectures}
One may wonder if there is a geometric realization of representations of {\em nondegenerate}
orthosymplectic supergroups. It turns out that such a realization exists (conjecturally) for
the categories of integrable representations of {\em quantized} type $D$ orthosymplectic algebras
$U_q(\osp(2k|2l))$.
First of all, similarly to the
classical Kazhdan-Lusztig equivalence, it is expected that
$U_q(\osp(2k|2l))\on{-mod}\cong\on{KL}_c(\widehat\osp(2k|2l))$, where $q=\exp(\pi\sqrt{-1}/c)$,
and $\on{KL}_c(\widehat\osp(2k|2l))$ stands for the derived category of
$\SO(2k,\bO)\times\Sp(2l,\bO)$-equivariant
$\widehat\osp(2k|2l)$-modules of the central charge corresponding to the invariant bilinear form
$(X,Y)=c\cdot\on{sTr}(XY)-\frac12\Killing_{\osp(2k|2l)}(X,Y)$ on $\osp(2k|2l)$.
Second, it is expected that
the category $\on{KL}_c(\widehat\osp(2n|2n))$ is equivalent to the $q$-monodromic
$\SO(2n,\bO)$-equivariant derived constructible category of the complement $\CL^\bullet_{2n+1}$ of the
zero section of the determinant line bundle on $\Gr_{\SO_{2n+1}}$, and this equivalence takes the
standard $t$-structure of $\on{KL}_c(\widehat\osp(2n|2n))$ to the perverse $t$-structure.

Further, it is expected that the category $\on{KL}_c(\widehat\osp(2n|2n-2))$ is equivalent
to the $q$-monodromic $\SO(2n-1,\bO)$-equivariant derived constructible category of the
complement of the zero section $\CL^\bullet_{2n}$ of the determinant line bundle on $\Gr_{\SO_{2n}}$,
and this equivalence takes the
standard $t$-structure of $\on{KL}_c(\widehat\osp(2n|2n-2))$ to the perverse $t$-structure.
For other values of $(2k|2l)$ the situation depends on the dichotomy $2k-1<2l$ or
$2k-1>2l$.
In case $2k-1<2l$ it is expected that $\on{KL}_c(\widehat\osp(2k|2l))$ is equivalent to the
$q$-monodromic $\SO(2k,\bO)$-equivariant derived constructible category of
$\CL^\bullet_{2l+1}$ with certain Whittaker conditions, cf.~\S\ref{gaiotto} for more details.
In case $2k\nolinebreak-\nolinebreak 1>\nolinebreak 2l$ it is expected that
$\on{KL}_c(\widehat\osp(2k|2l))$ is equivalent
to the $q$-monodromic $\SO(2l+1,\bO)$-equivariant derived constructible category of
$\CL^\bullet_{2k}$ with certain Whittaker conditions, cf.~\S\ref{gaiotto} for more details.
In particular, the special cases $k=0$ or $l=0$ of this conjecture follow from the
Fundamental Local Equivalence of the geometric Langlands program, see~\cite[\S2]{bfgt}.

In the case $(2k|2l)=(4|2)$, each connected component of $\Gr_{\SO_4}$ is isomorphic to
$\Gr_{\on{SL}_2}\times\Gr_{\on{SL}_2}$, so that the Picard group of each connected component is
generated by {\em two} determinant line bundles, and we have one extra degree of freedom
in twisting parameters. It is expected that the corresponding
categories of equivariant monodromic perverse sheaves are equivalent to the Kazhdan-Lusztig
categories for the affine Lie superalgebras $D(2,1;\alpha)^{(1)}$, cf.~Remark~\ref{d21}.

\subsection{Acknowledgments}
We are grateful to A.~Berezhnoy, R.~Bezrukavnikov, I.~Entova-Aizenbud, P.~Etingof, B.~Feigin,
D.~Gaiotto, D.~Gaitsgory, D.~Leites, I.~Motorin,
Y.~Sakellaridis, V.~Serganova, A.~Venkatesh and E.~B.~Vinberg for very useful discussions.
Above all, we are indebted to V.~Ginzburg: it should be clear from the above that this paper
is but an outgrowth of the project initiated by him, worked out by the tools developed by him.
A.B.\ was partially supported by NSERC. M.F.\ was partially funded within the framework of the HSE
University Basic Research Program and the Russian Academic Excellence Project `5-100'.

\section{A coherent realization of $D^b_{\SO(N-1,\bO)}(\Gr_{\SO_N})$}
\label{2}\nopagebreak
\subsection{Orthogonal and symplectic Lie algebras}
\label{setup}
In both cases~\ref{OSE}(a,b) the tensor product space $V_0\otimes V_1$ is equipped with a
nondegenerate skew-symmetric bilinear form $(,)\otimes\langle,\rangle$. It is preserved by the
action of the group $\SO(V_0)\times\Sp(V_1)$. The corresponding moment map is described as follows.

Our nondegenerate bilinear forms on $V_0,V_1$ define identifications
$V_0\cong V_0^*,\ V_1\cong V_1^*$. In particular, $V_0\otimes V_1$ is identified with
$V_0^*\otimes V_1=\Hom(V_0,V_1)$. Given $A\in\Hom(V_0,V_1)$ we have the adjoint operator
$A^t\in\Hom(V_1,V_0)$. We have the moment maps
\[\bq_0\colon V_0\otimes V_1\to\fso(V_0)^*,\ A\mapsto A^tA,\ \operatorname{and}\
\bq_1\colon V_0\otimes V_1\to\fsp(V_1)^*,\ A\mapsto AA^t,\]
where we make use of the identification $\fso(V_0)\cong\fso(V_0)^*$
(resp.\ $\fsp(V_1)\cong\fsp(V_1)^*$)
via the trace form (resp.\ {\em negative} trace form) of the defining representation.
Note also that the complete moment map $(\bq_0,\bq_1)$ coincides with the ``square''
(half-self-supercommutator) map on the odd part $\bg_{\bar1}$ of the orthosymplectic Lie
superalgebra $\bg$. We define the odd nilpotent cone $\CN_{\bar1}\subset V_0\otimes V_1$
as the {\em reduced} subscheme cut out by
the condition of nilpotency of $A^tA$ (equivalently, by the condition of nilpotency of $AA^t$).

We choose Cartan subalgebras $\ft_0\subset\fso(V_0)$ and $\ft_1\subset\fsp(V_1)$.
We choose a basis $\varepsilon_1,\ldots,\varepsilon_n$ in $\ft_0^*$ such that the Weyl group
$W_0=W(\fso(V_0),\ft_0)$ acts by permutations of basis elements and by the sign changes
of an even number of basis elements, and the roots of $\fso(V_0)$ are given by $\{\pm\eps_i\pm\eps_j,\ i\neq j\}$. We set $\Sigma_0=\ft_0^*\dslash W_0$.
We also choose a basis $\delta_1,\ldots,\delta_n$ in $\ft_1^*$ in the odd case
(resp.\ $\delta_1,\ldots,\delta_{n-1}$ in the even case) such that the Weyl group
$W_1=W(\fsp(V_1),\ft_1)$ acts by permutations of basis elements and by the sign changes
of basis elements, and the roots of $\fsp(V_1)$ are given by $\{\pm\delta_i\pm\delta_j,\ i\neq j;\ \pm2\delta_i\}$. We set $\Sigma_1=\ft_1^*\dslash W_1$.

In the odd case we identify $\ft_0^*\cong\ft_1^*,\ \varepsilon_i\mapsto\delta_i$,
and this identification gives rise
to a two-fold cover $\varPi_{01}\colon \Sigma_0\to\Sigma_1$. Similarly, in the even case
we identify $\ft_1^*$ with a hyperplane in $\ft_0^*,\ \delta_i\mapsto\varepsilon_i$,
and this identification gives rise
to a closed embedding $\varPi_{10}\colon \Sigma_1\hookrightarrow\Sigma_0$.

Recall (see e.g.~\cite[\S\S2.1,2.6]{bf}) that $\Sigma_0$ is embedded as a Kostant slice
into the open set of regular elements $(\fso(V_0)^*)^\reg\subset\fso(V_0)^*$, and $\Sigma_1$
is embedded into $(\fsp(V_1)^*)^\reg$. Furthermore, these slices $\Sigma_0,\Sigma_1$ carry the
universal centralizer sheaves of abelian Lie algebras $\fz_0,\fz_1$. Given an $\SO(V_0)$-module $V$
(resp.\ an $\Sp(V_1)$-module $V'$), we have the corresponding graded
$\Gamma(\Sigma_0,\fz_0)$-module $\kappa_0(V)$ (resp.\ the
$\Gamma(\Sigma_1,\fz_1)$-module $\kappa_1(V')$)
(the {\em Kostant functor} of {\em loc.~cit.}).
Since the universal enveloping algebra $U(\fz_0)$ (resp.\ $U(\fz_1)$) is identified in
{\em loc.\ cit.} with the sheaf of functions on the tangent bundle $T\Sigma_0$ (resp.\
$T\Sigma_1$), we will use the same notation $\kappa_0(V),\kappa_1(V')$ for the corresponding
coherent sheaves on $T\Sigma_0, T\Sigma_1$.
Finally, according to the previous paragraph, we have the morphisms
$d\varPi_{01}\colon T\Sigma_0\to T\Sigma_1$ in the odd case and
$d\varPi_{10}\colon T\Sigma_1\to T\Sigma_0$ in the even case.

We choose Borel subalgebras $\ft_0\subset\fb_0\subset\fso(V_0)$ corresponding to the
choice of positive roots $R_0^+=\{\varepsilon_i\pm\varepsilon_j,\ i<j\}$
and $\ft_1\subset\fb_1\subset\fsp(V_1)$ corresponding to the choice of positive roots
$R_1^+=\{\delta_i\pm\delta_j,\ i<j;\ 2\delta_i\}$. We set $\rho_0=\frac12\sum_{\alpha\in R_0^+}\alpha$
and $\rho_1=\frac12\sum_{\alpha\in R_1^+}\alpha$.
We denote by $\Lambda_0$ (resp.\ $\Lambda_1$) the weight
lattice of $\SO(V_0)$ (resp.\ of $\Sp(V_1)$). We denote by $\Lambda_0^+\subset\Lambda_0$
(resp.\ $\Lambda_1^+\subset\Lambda_1$)
the monoids of dominant weights. For $\lambda\in\Lambda_0^+$ (resp.\ $\lambda\in\Lambda_1^+$)
we denote by $V_\lambda$ the irreducible representation of $\SO(V_0)$ (resp.\ of $\Sp(V_1)$)
with highest weight $\lambda$.

\medskip

In what follows $\SO(V_0)$ will play the role of the Langlands dual group of
$\SO_{N-1}$ (resp.\ of $\SO_N$) in the odd (resp.\ even) case, while $\Sp(V_1)$ will play the role
of the Langlands dual group of $\SO_N$ (resp.\ of $\SO_{N-1}$) in the odd (resp.\ even) case.
For this reason we will need various claims that are formulated and even proved similarly
in the odd/even cases up to replacing symplectic groups with special orthogonal groups
(especially in~\S\ref{IT}).
In order to save space and not to duplicate numerous claims, we introduce the following
`blinking' notation. We set $G_1=\Sp(V_1),\ G_0=\SO(V_0)$ (not to be confused with $\sG_{\bar0}$!),
and let $(\sfb,\sfs)=(1,0)$ (resp.\ $(\sfb,\sfs)=(0,1)$) in the odd (resp.\ even) case.
Then $G_\sfb=\SO_N^\vee$ is the group of bigger dimension, and $G_\sfs=\SO_{N-1}^\vee$ is the
group of smaller dimension. Accordingly, we set $\fg_1=\fsp(V_1),\ \fg_0=\fso(V_0)$
(not to be confused with $\bg_{\bar1},\bg_{\bar0}$!), and get $\dim\fg_\sfb>\dim\fg_\sfs$.
Similarly, we have $\dim V_\sfb\geq\dim V_\sfs$ and $\varPi_{\sfs\sfb}\colon\Sigma_\sfs\to\Sigma_\sfb$
(but we do {\em not} have $\varPi_{\sfb\sfs}$), etc.

\subsection{The main theorem}
\label{osp}
Recall the orthosymplectic Lie superalgebra $\bg=\osp(V_0|V_1)$ of~\S\ref{OSE}.
We consider the dg-algebra\footnote{We view $\bg_{\bar1}$ as an {\em odd} vector space, so that
  $\Sym(\bg_{\bar1}[-1])$ (with grading disregarded) is really a symmetric
  (infinite-dimensional) algebra, not an exterior algebra.} $\fG^\bullet=\Sym(\bg_{\bar1}[-1])$
with trivial differential,
and the triangulated category $D^{\sG_{\bar0}}_\perf(\fG^\bullet)$ obtained by localization (with
respect to quasi-isomorphisms) of the category of perfect $\sG_{\bar0}$-equivariant
dg-$\fG^\bullet$-modules. We also consider the corresponding category $SD^{\sG_{\bar0}}_\perf(\fG^\bullet)$
with coefficients in super vector spaces. Since $\fG^\bullet$ is super-commutative, we have
a symmetric monoidal structure $\otimes_{\fG^\bullet}$ on the category $SD^{\sG_{\bar0}}_\perf(\fG^\bullet)$.

The action of the central element $(\Id_{V_0},-\Id_{V_1})\in\sG_{\bar0}$ on an object of
$D^{\sG_{\bar0}}_\perf(\fG^\bullet)$ equips this object with an extra $\BZ/2\BZ$-grading, and thus
defines a fully faithful functor
$D^{\sG_{\bar0}}_\perf(\fG^\bullet)\to SD^{\sG_{\bar0}}_\perf(\fG^\bullet)$ of ``superization'',
such that its essential image is
closed under the monoidal structure $\otimes_{\fG^\bullet}$. This defines the
monoidal structure $\otimes_{\fG^\bullet}$ on the category $D^{\sG_{\bar0}}_\perf(\fG^\bullet)$.

We consider the following complex $H^\bullet$ of {\em odd} vector spaces living in degrees $0,1\colon
\bg_{\bar1}\stackrel{\Id}{\longrightarrow}\bg_{\bar1}$.
We define the Koszul complex $K^\bullet$ as the symmetric algebra $\Sym(H^\bullet)$. The degree
zero part \[K^0=\Lambda(V_0\otimes V_1)=:\Lambda\] (as a vector space,
with a super-structure disregarded). We turn $K^\bullet$ into a  dg-$\fG^\bullet-\Lambda$-bimodule
by letting $\fG^\bullet$ act by multiplication, and $\Lambda$ by differentiation.
Note that $K^\blt$ is quasi-isomorphic to $\BC$ in degree 0 as a complex of vector spaces,
but {\em not} as a dg-$\fG^\bullet-\Lambda$-bimodule.

We consider the derived category $D_\fid^{\sG_{\bar0}}(\Lambda)$ of finite dimensional complexes of
$\sG_{\bar0}\ltimes\Lambda$-modules. If we remember the super-structure of $\Lambda$, we obtain
the corresponding category of super dg-modules $SD_\fid^{\sG_{\bar0}}(\Lambda)=D^b\Rep(\ul\sG)$.
We have the Koszul equivalence functors
\[\varkappa\colon D_\fid^{\sG_{\bar0}}(\Lambda)\iso D^{\sG_{\bar0}}_\perf(\fG^\bullet),\
D\Rep(\ul\sG)=SD_\fid^{\sG_{\bar0}}(\Lambda)\iso SD^{\sG_{\bar0}}_\perf(\fG^\bullet),\
\CM\mapsto K^\bullet\otimes_\Lambda\CM.\]
The Koszul equivalence
$\varkappa\colon D^b\Rep(\ul\sG)\iso SD^{\sG_{\bar0}}_\perf(\fG^\bullet)$ is monoidal
with respect to the usual tensor structure on the LHS and $\otimes_{\fG^\bullet}$ on the RHS.

The action of $(\Id_{V_0},-\Id_{V_1})\in\sG_{\bar0}$ gives rise to a
fully faithful ``superization'' functor
$D_\fid^{\sG_{\bar0}}(\Lambda)\to SD_\fid^{\sG_{\bar0}}(\Lambda)=D^b\Rep(\ul\sG)$ with the essential image
closed under the tensor structure.
This defines the tensor structure on $D_\fid^{\sG_{\bar0}}(\Lambda)$ such that the Koszul
equivalence $\varkappa\colon D_\fid^{\sG_{\bar0}}(\Lambda)\iso D^{\sG_{\bar0}}_\perf(\fG^\bullet)$ is
monoidal.

\medskip

Recall the quadratic moment maps
$\fso(V_0)^*\stackrel{\bq_0}{\longleftarrow}V_0\otimes V_1\stackrel{\bq_1}{\longrightarrow}\fsp(V_1)^*$
of~\S\ref{setup}. They give rise to homomorphisms
\[\Sym\!\big(\fso(V_0)[-2]\big)\xrightarrow{\bq_0^*}\fG^\bullet
=\Sym\!\big(\Pi(V_0\otimes V_1)[-1]\big)\xleftarrow{\bq_1^*}\Sym\!\big(\fsp(V_1)[-2]\big)\]
and to the corresponding induction functors
\[D^{\SO(V_0)}_\perf\Big(\Sym\!\big(\fso(V_0)[-2]\big)\Big)\xrightarrow{\bq_0^*}
D^{\sG_{\bar0}}_\perf(\fG^\bullet)\xleftarrow{\bq_1^*}
D^{\Sp(V_1)}_\perf\Big(\Sym\!\big(\fsp(V_1)[-2]\big)\Big).\]
Thus the category $D^{\sG_{\bar0}}_\perf(\fG^\bullet)$ acquires a module structure over the
monoidal category $D^{\SO(V_0)}_\perf\Big(\Sym\!\big(\fso(V_0)[-2]\big)\Big)\otimes
D^{\Sp(V_1)}_\perf\Big(\Sym\!\big(\fsp(V_1)[-2]\big)\Big)$. Recall the `blinking' notation
of~\S\ref{setup}, so that the latter monoidal category is denoted
$D^{G_\sfs}_\perf\big(\Sym(\fg_\sfs[-2])\big)\otimes D^{G_\sfb}_\perf\big(\Sym(\fg_\sfb[-2])\big)$.
Also recall the equivalences
\[D^{G_\sfs}_\perf\big(\Sym(\fg_\sfs[-2])\big)\xrightarrow[\boldsymbol\beta]{\sim}
D^b_{\SO(N-1,\bO)}(\Gr_{\SO_{N-1}}),\]
\[D^{G_\sfb}_\perf\big(\Sym(\fg_\sfb[-2])\big)\xrightarrow[\boldsymbol\beta]{\sim}
D^b_{\SO(N,\bO)}(\Gr_{\SO_N})\] of~\cite[Theorem 5]{bf}.

\medskip

Finally recall the odd nilpotent cone $\CN_{\bar1}\subset V_0\otimes V_1$ of~\S\ref{setup}.
We denote by $D^{\sG_{\bar0}}_\perf(\fG^\bullet)_{\CN_{\bar1}}$ the full subcategory of
$D^{\sG_{\bar0}}_\perf(\fG^\bullet)$ formed by
complexes with cohomology set-theoretically supported at $\CN_{\bar1}$. We also
denote by $D^{\on{comp}}_{\SO(N-1,\bO)}(\Gr_{\SO_N})$ the full subcategory of
$D^b_{\SO(N-1,\bO)}(\Gr_{\SO_N})$ formed by the objects compact as the objects of the {\em unbounded}
category $D_{\SO(N-1,\bO)}(\Gr_{\SO_N})$.

\bigskip

Our goal is the following

\begin{thm}
\label{main}
  \textup{(a)} There exists an equivalence of triangulated categories
  $\Phi\colon D^{\sG_{\bar0}}_\perf(\fG^\bullet)\iso D^b_{\SO(N-1,\bO)}(\Gr_{\SO_N})$
  commuting with the left convolution action of the monoidal
  spherical Hecke category $D^{G_\sfs}_\perf\big(\Sym(\fg_\sfs[-2])\big)\cong D^b_{\SO(N-1,\bO)}(\Gr_{\SO_{N-1}})$
  and with the right convolution action of the monoidal
  spherical Hecke category $D^{G_\sfb}_\perf\big(\Sym(\fg_\sfb[-2])\big)\cong D^b_{\SO(N,\bO)}(\Gr_{\SO_N})$.

\textup{(b)} The composed equivalence
  \[\Phi\circ\varkappa\colon
  D_\fid^{\sG_{\bar0}}(\Lambda)\iso D^b_{\SO(N-1,\bO)}(\Gr_{\SO_N})\]
  is exact with respect to the tautological $t$-structure on
  $D_\fid^{\sG_{\bar0}}(\Lambda)$ and the perverse $t$-structure on $D^b_{\SO(N-1,\bO)}(\Gr_{\SO_N})$.

  \textup{(c)} This equivalence is monoidal with respect to the tensor structure on
  $D_\fid^{\sG_{\bar0}}(\Lambda)$ and the fusion $\star$ on $D^b_{\SO(N-1,\bO)}(\Gr_{\SO_N})$.

  \textup{(d)} The equivalence of \textup{(b)} extends to a monoidal equivalence from
  $SD_\fid^{G_{\bar0}}(\Lambda)=D^b\Rep(\ul\sG)$ to the equivariant derived constructible category
  with coefficients in super vector spaces $SD^b_{\SO(N-1,\bO)}(\Gr_{\SO_N})$.

  \textup{(e)} The equivariant derived category $D^b_{\SO(N-1,\bO)}(\Gr_{\SO_N})$ is equivalent to the
  bounded derived category of the abelian category $\Perv_{\SO(N-1,\bO)}(\Gr_{\SO_N})$.

  \textup{(f)} $\Phi$ induces an equivalence
  $D^{\sG_{\bar0}}_\perf(\fG^\bullet)_{\CN_{\bar1}}\iso D^{\on{comp}}_{\SO(N-1,\bO)}(\Gr_{\SO_N})$.
  In particular, $\Phi$ extends to an equivalence
  \[\QCoh_{\CN_{\bar1}}\big(\Pi(V_0\otimes V_1)[1]/\sG_{\bar0}\big)\iso D_{\SO(N-1,\bO)}(\Gr_{\SO_N}).\]
  Also, a sheaf $\CF\in D^b_{\SO(N-1,\bO)}(\Gr_{\SO_N})$ lies in $D^{\on{comp}}_{\SO(N-1,\bO)}(\Gr_{\SO_N})$
  iff $\dim H^\bullet_{\SO(N-1,\bO)}(\Gr_{\SO_N},\CF)<\infty$.
\end{thm}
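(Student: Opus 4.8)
The plan is to build the equivalence $\Phi$ by following the blueprint of~\cite{br} in the case $N=2$, now treated uniformly in the odd/even cases via the `blinking' notation. The starting point is the pair of Bezrukavnikov–Finkelberg equivalences $\boldsymbol\beta$ from~\cite[Theorem~5]{bf}, which identify each of the two spherical Hecke categories $D^b_{\SO(N-1,\bO)}(\Gr_{\SO_{N-1}})$ and $D^b_{\SO(N,\bO)}(\Gr_{\SO_N})$ with $D^{G_\bullet}_\perf(\Sym(\fg_\bullet[-2]))$. First I would produce, on the geometric side, the bimodule category $D^b_{\SO(N-1,\bO)}(\Gr_{\SO_N})$ with its commuting left convolution action of $D^b_{\SO(N-1,\bO)}(\Gr_{\SO_{N-1}})$ (coming from the embedding $\SO_{N-1}\hookrightarrow\SO_N$ and the ind-proper convolution maps) and right convolution action of $D^b_{\SO(N,\bO)}(\Gr_{\SO_N})$; on the algebraic side the analogous bimodule is $D^{\sG_{\bar0}}_\perf(\fG^\bullet)$ with the two induction functors $\bq_0^*,\bq_1^*$ along the quadratic moment maps. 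The task is to construct an equivalence intertwining these module structures, and then to check properties (b)–(f).

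The key step — and the main obstacle — is the actual construction of $\Phi$ and the proof that it is an equivalence. Here the strategy is the one advertised in the Remark: use Ginzburg's theorem~\cite{g2} to compute $\Ext$-algebras between $\SO(N-1,\bO)$-equivariant IC-sheaves on $\Gr_{\SO_N}$ as $\Hom$'s over the equivariant cohomology ring, except that now there is \emph{no} cellular stratification making all the relevant IC-sheaves smooth along cells, so Ginzburg's theorem is not available a priori and must be bootstrapped. Concretely, I would proceed by dévissage along the order filtration on dominant coweights of $\SO_N$: for the largest stratum one identifies the relevant $\Ext$'s and cohomology groups directly, uses the module structure over the two honest spherical categories to pin down the answer (this is where $\bq_0^*,\bq_1^*$ enter — the geometric cohomology is a module over $H^\bullet_{\SO(N-1,\bO)}(\pt)\otimes H^\bullet_{\SO(N,\bO)}(\pt)$, matching $\Sym(\fg_\sfs[-2])\otimes\Sym(\fg_\sfb[-2])$ acting on $\fG^\bullet$), and then one propagates to all strata by induction, simultaneously deducing Ginzburg's formality/purity statement as a byproduct. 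A parallel input is the computation of the stalks of these IC-sheaves in terms of the orthosymplectic Kostka polynomials of~\S\ref{kostka}; matching these against the graded characters of the $\fG^\bullet$-modules $\bq_0^*(V_\lambda)$ (equivalently the modules $\kappa_\sfb(V_\lambda)$ pushed to $T\Sigma$) gives the numerical control needed to show that $\Phi$ is essentially surjective and fully faithful on a generating set.

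Once $\Phi$ is in hand, parts (b)–(e) follow by fairly standard arguments. For (b), exactness of $\Phi\circ\varkappa$: the Koszul equivalence $\varkappa$ sends the tautological heart $\Rep(\ul\sG)$ to objects of $D^{\sG_{\bar0}}_\perf(\fG^\bullet)$ whose image under $\Phi$ can be identified with the IC-sheaves (and their Tate twists/shifts assembled correctly), and one checks that $\Phi$ of a module-generator lands in a perverse sheaf, then uses that both sides are generated under the triangulated structure and the $t$-structures are compatible with the module actions; the weight/parity bookkeeping is the routine-but-delicate part. For (c), monoidality under fusion: the fusion product $\star$ is computed via the Beilinson–Drinfeld degeneration over the curve, and on the algebraic side $\otimes_{\fG^\bullet}$ is the tensor product of $\ul\sG$-modules transported through $\varkappa$; one identifies the two by a factorization argument, exactly as for the classical Satake equivalence and as in~\cite{bfgt} for $\Psi$. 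Part (d) is formal: the action of the central involution $(\Id_{V_0},-\Id_{V_1})\in\sG_{\bar0}$ furnishes the extra $\BZ/2\BZ$-grading, and the ``superization'' functors on both sides, already set up in~\S\ref{osp}, intertwine through $\Phi\circ\varkappa$.

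For (e), that $D^b_{\SO(N-1,\bO)}(\Gr_{\SO_N})\cong D^b(\Perv_{\SO(N-1,\bO)}(\Gr_{\SO_N}))$: transport the question through $\Phi\circ\varkappa$ to the statement that $D_\fid^{\sG_{\bar0}}(\Lambda)\cong D^b\Rep(\ul\sG)$ is the derived category of its heart, which holds because $\Rep(\ul\sG)$ has enough projectives/injectives and finite cohomological dimension (the enveloping algebra of $\ul\sG$ being, up to the reductive factor $\sG_{\bar0}$, the finite-dimensional exterior algebra $\Lambda$), so that the realization functor is an equivalence — in contrast to the classical semisimple situation. Finally, (f) is the support-condition refinement: $\CN_{\bar1}\subset V_0\otimes V_1$ is exactly the vanishing locus of the augmentation ideal image of $\Sym(\fg_\sfs)\otimes\Sym(\fg_\sfb)_{>0}$ after inverting nothing — more precisely, an object of $D^{\sG_{\bar0}}_\perf(\fG^\bullet)$ has cohomology supported on $\CN_{\bar1}$ iff it is a perfect complex whose restriction along the two maps to $\fso(V_0)^*,\fsp(V_1)^*$ is torsion, iff its total equivariant cohomology $H^\bullet_{\SO(N-1,\bO)}(\Gr_{\SO_N},-)$ (a module over the polynomial ring $H^\bullet_{\SO(N-1,\bO)}(\pt)\otimes H^\bullet_{\SO(N,\bO)}(\pt)$ via the module structures transported by $\Phi$) is finite-dimensional. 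That finiteness is precisely the compactness criterion in the unbounded equivariant category $D_{\SO(N-1,\bO)}(\Gr_{\SO_N})$, giving $D^{\sG_{\bar0}}_\perf(\fG^\bullet)_{\CN_{\bar1}}\iso D^{\on{comp}}_{\SO(N-1,\bO)}(\Gr_{\SO_N})$; passing to ind-completions turns the left side into $\QCoh_{\CN_{\bar1}}\big(\Pi(V_0\otimes V_1)[1]/\sG_{\bar0}\big)$ and the right into $D_{\SO(N-1,\bO)}(\Gr_{\SO_N})$, which is the displayed equivalence.
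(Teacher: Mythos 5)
Your outline of parts (b)--(f) is broadly consistent with what the paper does (those parts are handled by transport of structure, by the kernel/formality argument for fusion, and by the Arinkin--Gaitsgory-style generation argument for the compact objects), but the core of the theorem --- the actual construction of $\Phi$ in part (a) --- is where your proposal has a genuine gap, on two counts.

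First, the mechanism you propose (a d\'evissage over the stratification by orbits $\BO^{\lambda_\sfs}_{\lambda_\sfb}$, ``propagating'' Ginzburg's theorem stratum by stratum) is not an argument, and it omits the structural device that makes the paper's proof work: one passes to the \emph{deequivariantized} category $D^\deeq_{\SO(N-1,\bO)}(\Gr_{\SO_N})$, which is generated by the single object $E_0=\IC^0_0$, and one identifies the dg-algebra $\RHom_{D^\deeq_{\SO(N-1,\bO)}(\Gr_{\SO_N})}(E_0,E_0)$ with $\fG^\bullet=\Sym(\Pi(V_0\otimes V_1)[-1])$. Concretely this requires: (i) formality, proved via purity of $i_0^!$ of convolutions of minuscule IC-sheaves using a Bialynicki--Birula cell decomposition of the fiber $\bm^{-1}(0)$ (Lemma~\ref{formality}); (ii) an \emph{injection} of the topological Ext-algebra into $\Hom$'s over equivariant cohomology (Lemma~\ref{inj}) --- only the injectivity half of Ginzburg's theorem is used, which is why the absence of a cellular decomposition is not fatal; (iii) explicit degree-one generators $h\otimes V_0\otimes V_1\subset\fE^1$ coming from the normal bundle of $Q^{N-3}\subset Q^{N-2}$; and (iv) an invariant-theoretic computation (first fundamental theorems for $\Sp(V_1)$ and $\SO(V_0)$, Kostant slices, codimension $\geq 2$ of the non-regular locus) identifying the target of the injection with $\fG^\bullet$, so that the sandwich $\fG^\bullet\to\fE^\bullet\hookrightarrow\fG^\bullet$ is the identity on generators. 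None of these steps appears in your plan, and ``identify the relevant Ext's directly and induct'' does not substitute for them.

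Second, your proposed numerical input is circular relative to the paper: the computation of the stalks of $\IC^{\lambda_\sfs}_{\lambda_\sfb}$ in terms of orthosymplectic Kostka polynomials is Theorem~\ref{ic stalk}, which is \emph{deduced from} the equivalence $\Phi$ (its proof identifies $\Phi(\CalC^{\mu_0}_{\mu_1})$ with the $!$-extension $C^{\mu_0}_{\mu_1}$); the only independent route to those stalks mentioned in the paper is the Iwahori-equivariant statement, which is a conjecture. So you cannot feed the Kostka-polynomial stalk formula into the proof of essential surjectivity and full faithfulness of $\Phi$ without first establishing it by other means.
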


The proof will be given in~\S\ref{monoidal} after some preparations
in~\S\S\ref{deeq}--\ref{nilpotent}.

\subsection{$\SO(N-1,\bO)$-orbits in $\Gr_{\SO_N}$}
\label{orb}
The following lemma is well known to the experts; we learned it from Y.~Sakellaridis.

\begin{lem}
  \label{sake}
  There is a natural bijection between the set of $\SO(N-1,\bO)$-orbits on $\Gr_{\SO_N}$
  and the monoid of dominant coweights of $\SO_{N-1}\times\SO_N$.
\end{lem}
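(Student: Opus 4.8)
The plan is to reduce to a double-coset description and then build an explicit complete invariant by lattice-theoretic bookkeeping, following the pattern of the rank-one computation of~\cite{br} and its $\GL$-counterpart in~\cite{bfgt}.

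First I would identify the set of $\SO(N-1,\bO)$-orbits on $\Gr_{\SO_N}$ with the double cosets $\SO(N-1,\bO)\backslash\SO(N,\bF)/\SO(N,\bO)$, equivalently with the $\SO(N,\bO)$-orbits on $\SO(N,\bF)/\SO(N-1,\bO)$. Fix a nondegenerate symmetric form on $\bF^N$, a vector $v_0$ with $q(v_0)=1$ whose stabilizer in $\SO_N$ is the block subgroup $\SO_{N-1}$, and the orthogonal decomposition $\bF^N=\bF v_0\perp W$ with $W:=v_0^\perp$; take the standard self-dual lattice $L_0=\bO v_0\perp W_0$, so that $v_0$ is primitive in $L_0$ and $\SO(N-1,\bO)=\on{Stab}_{\SO(N,\bO)}(v_0)$. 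In the lattice model of the affine Grassmannian, a point of $\SO(N,\bF)/\SO(N-1,\bO)$ is a pair $(L,v)$ with $L$ a self-dual $\bO$-lattice in $\bF^N$ and $v\in L$ a primitive vector of norm $1$; here $\SO(N,\bO)$ acts diagonally, fixing $L_0$, while $\SO(N-1,\bO)$ acts on the lattices preserving the splitting $\bF v_0\perp W$. (There is a $\BZ/2\BZ$-worth of connected components of the space of self-dual lattices; I would pin this down at the outset, either by passing to the orthogonal group and cutting out components or by carrying a discriminant-type datum. The small cases $N=2,3$ are treated directly, so one assumes $N\ge4$ in the inductive part.)

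Second, to the $\SO(N,\bO)$-orbit of a pair $(L,v)$ I would attach two discrete invariants. From the position of $L$ relative to the distinguished splitting --- the sublattices $L\cap W\subseteq\on{pr}_W(L)$ of $W$, the integers $a\le b$ with $\on{pr}_{\bF v_0}(L)=t^{a}\bO v_0$ and $L\cap\bF v_0=t^{b}\bO v_0$, and the pairing induced between $\on{pr}_W(L)/(L\cap W)$ and the cyclic module $t^{a}\bO v_0/t^{b}\bO v_0$ --- one uses the Cartan decomposition for $\SO(W)=\SO_{N-1}$ together with the residual $\SO(N-1,\bO)$-action to reach a normal form, producing a dominant coweight $\mu$ of $\SO_{N-1}$. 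Independently, a refined Cartan-type position of $L$ relative to $L_0$ (equivalently, the $\SO(N,\bO)$-orbit of $v$ on the quadric $\{q=1\}$, refined by $L$) yields a dominant coweight $\lambda$ of $\SO_N$. The content of the lemma is that the assignment $(L,v)\mapsto(\mu,\lambda)$ descends to a bijection from $\SO(N-1,\bO)$-orbits onto the monoid of dominant coweights of $\SO_{N-1}\times\SO_N$; surjectivity is proved by exhibiting, for each pair, an explicit self-dual lattice with a primitive norm-one vector realizing it, and it is here that one must check that the \emph{entire} monoid occurs and not merely a proper submonoid.

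The main obstacle --- essentially the whole content --- is the completeness of the invariant: that two pairs with equal $(\mu,\lambda)$ lie in one $\SO(N-1,\bO)$-orbit. This is a several-variable elementary-divisor computation in the orthogonal setting (normal forms for the triple ``self-dual lattice $L$, distinguished hyperplane $W$, primitive vector $v$''), where all the genuine casework lives: odd versus even $N$; the parity and discriminant subtleties of unimodular quadratic lattices over $\bO=\BC\dbkts t$; and the interplay of the two Weyl groups. It is the direct analogue of the rank-one computation of~\cite{br} and the mirabolic $\GL$-computation of~\cite{bfgt}, but heavier. As conceptual context, this bijection is the ``relative Cartan decomposition'' attached to the spherical variety $X=\on{G}/\on{H}$ with $\on{G}=\SO_{N-1}\times\SO_N$ and $\on{H}=\SO_{N-1}$ embedded diagonally, whose dual group is the whole of $\SO(V_0)\times\Sp(V_1)$; one can phrase matters on that side, but since $X$ is not wavefront the identification of the orbit set with all dominant coweights of $\SO_{N-1}\times\SO_N$ still requires the explicit lattice analysis above --- which, moreover, furnishes the orbit representatives needed later (for instance in~\thmref{ic stalk}).
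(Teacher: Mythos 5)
Your reduction to double cosets and the strategy of attaching a pair of dominant coweights as a complete invariant of a pair (self-dual lattice, primitive norm-one vector) is a legitimate route, and it is close in spirit to the paper's \emph{second} parametrization (Lemma~\ref{sake?}). But as written the proposal has a genuine gap: the two steps you yourself identify as ``essentially the whole content'' --- that the invariant $(\mu,\lambda)$ is complete (injectivity on orbits) and that every dominant pair is realized (surjectivity onto the full monoid rather than a proper submonoid) --- are never carried out; they are only described as ``a several-variable elementary-divisor computation in the orthogonal setting'' with unresolved casework (odd versus even $N$, components, discriminants). A normal-form theorem for triples $(L, W, v)$ over $\bO$ is precisely the hard part here, and no argument is given for why such a normal form exists or is unique. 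Until that computation is actually done, this is a plan, not a proof.

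It is worth seeing how the paper avoids this. For Lemma~\ref{sake} itself the argument is soft: the orbit set is identified with the monoid $\Lambda_X^+$ of invariant valuations of the spherical variety $X=(\SO_{N-1}\times\SO_N)/\SO_{N-1}$ via Luna--Vust and \cite[Theorem~8.2.9]{gn}; the Luna diagram of $X$ (from \cite{bp}) shows that the little Weyl group $W_X$ is the full Weyl group $W_0\times W_1$, so the rational cone is the full dominant cone, and a single explicit genericity check (trivial stabilizer of a generic complete flag) pins down the lattice inside that cone. Note in particular that your parenthetical ``$X$ is not wavefront'' is mistaken: since $W_X$ is the whole Weyl group, $X$ is wavefront, and this is exactly why the general theory applies and the heavy lattice analysis can be bypassed. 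Where explicit representatives \emph{are} needed (Lemma~\ref{sake?}, and later for Theorem~\ref{ic stalk}), the paper does not redo elementary-divisor theory for orthogonal lattices from scratch: it realizes $\SO_M$ as a component of the fixed points of an involution of $\GL_M$, imports the known classification of $\GL(N-1,\bO)$-orbits in $\Gr_{\GL_N}$ from \cite{fgt}, and takes connected components of fixed-point sets. If you want to salvage your approach, that reduction to the $\GL$ case is the practical way to make your ``completeness of the invariant'' step actually go through.
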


\begin{proof}
We consider the block-diagonal embedding $\SO_{N-1}\hookrightarrow\SO_{N-1}\times\SO_N$.
Then the set of orbits of $\SO(N-1,\bO)$ in $\Gr_{\SO_N}$ is in natural bijection with the set of
orbits of $\SO(N-1,\bF)$ in $\Gr_{\SO(N-1,\bO)}\times\Gr_{\SO(N,\bO)}$. Furthermore,
$X=(\SO_{N-1}\times\SO_N)/\SO_{N-1}$ is a homogeneous spherical variety of
$\on{G}:=\SO_{N-1}\times\SO_N$,
and the latter set of orbits is identified with the monoid $\Lambda_X^+$ of
$\on{G}$-invariant valuations on $\BC(X)$.
The proof goes back to~\cite[\S8]{lv}; for a modern exposition
see e.g.~\cite[Theorem~8.2.9]{gn}. Furthermore, the monoid $\Lambda_X^+$ coincides with
the monoid of dominant weights of the Gaitsgory-Nadler group $\on{G}_X^\vee$.
In our case $\on{G}_X^\vee$ coincides with the Langlands dual group
$\on{G}^\vee=\SO_{N-1}^\vee\times\SO_N^\vee$.

Indeed, the corresponding rational cone $\Lambda_{X,\BQ}^+$ can be computed from
the Luna diagram (aka Luna spherical system) of our spherical variety. In our case,
the Luna diagram is described e.g.\ in~\cite[(46),(50)]{bp}, and it follows that all the
simple roots of $\on{G}$ are spherical roots for $X$, i.e.\
the little Weyl group $W_X$ coincides with the Weyl group $W_0\times W_1$ of
$\SO_{N-1}\times\SO_N$. Hence $\Lambda_{X,\BQ}^+=\Lambda_{0,\BQ}^+\times\Lambda_{1,\BQ}^+$
(notation of~\S\ref{setup}).
In order to identify the monoid of dominant weights inside the rational cone
it suffices to check that the stabilizer in $\SO_{N-1}$ of a general point in the flag variety
of $\on{G}$ is trivial.

In the odd case~\ref{OSE}(a) we choose a basis $v_1,v_2,\ldots,v_{2n},v_{2n+1}$ in a vector
space $V$ equipped with symmetric bilinear form such that $v_{2n+1},v_{2n},\ldots,v_2,v_1$ is the
dual basis, and $\SO_{2n}\subset\SO_{2n+1}$ is the stabilizer of $v_{n+1}$.
We define a complete isotropic flag $U_1\subset U_2\subset\ldots\subset U_n\subset(\BC v_{n+1})^\perp$
and a complete isotropic flag $U'_1\subset U'_2\subset\ldots\subset U'_n\subset V$ as follows:
\[U_i:=\BC v_1\oplus\ldots\oplus\BC v_i,\
U'_i:=\BC v'_{2n+1}\oplus\ldots\oplus\BC v'_{2n+2-i},\]
where $v'_{2n+2-i}=v_{2n+2-i}-v_{n+1}-\frac12(v_1+v_2+\ldots+v_n)$.
It is immediate to see that $\on{Stab}_{\SO_{N-1}}(U_\bullet,U'_\bullet)$ is trivial.
In the even case~\ref{OSE}(b) the argument is similar.
\end{proof}

Note that in the odd case~\ref{OSE}(a),
$\SO_{N-1}^\vee\cong\SO(V_0),\ \SO_N^\vee\cong\Sp(V_1)$, while in the even case~\ref{OSE}(b),
$\SO_{N-1}^\vee\cong\Sp(V_1),\ \SO_N^\vee\cong\SO(V_0)$.
We will use another construction of bijection
$\Lambda_0^+\times\Lambda_1^+\cong\SO(N-1,\bO)\backslash\Gr_{\SO_N}$ (presumably it coincides
with the bijection of~Lemma~\ref{sake}, but we did not check this).
In the blinking notation of~\S\ref{setup}, given dominant coweights
$\lambda_\sfs\in\Lambda_\sfs^+,\ \lambda_\sfb\in\Lambda_\sfb^+$ we denote by
$\ol\Gr{}^{\lambda_\sfs}_{\SO_{N-1}}\wtimes\ol\Gr{}^{\lambda_\sfb}_{\SO_N}
\stackrel{\bm}{\longrightarrow}\Gr_{\SO_N}$ the convolution diagram of spherical Schubert varieties.
The convolution morphism $\bm$ is clearly $\SO(N-1,\bO)$-equivariant, so there is a well defined
$\SO(N-1,\bO)$-orbit in $\Gr_{\SO_N}$ open in the image of $\bm$. We will denote this orbit
$\BO^{\lambda_\sfs}_{\lambda_\sfb}\subset\Gr_{\SO_N}$.

\begin{lem}
  \label{sake?}
  The map $(\lambda_\sfs,\lambda_\sfb)\mapsto\BO^{\lambda_\sfs}_{\lambda_\sfb}$ is a bijection
  \[\Lambda_\sfs^+\times\Lambda_\sfb^+\iso\SO(N-1,\bO)\backslash\Gr_{\SO_N}.\]
\end{lem}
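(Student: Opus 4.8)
The plan is to construct the map explicitly, check injectivity by analyzing the image of the convolution morphism, and then deduce surjectivity by a counting/exhaustion argument using Lemma~\ref{sake}. First I would fix a pair of dominant coweights $(\lambda_\sfs,\lambda_\sfb)$ and examine the convolution variety $\ol\Gr{}^{\lambda_\sfs}_{\SO_{N-1}}\wtimes\ol\Gr{}^{\lambda_\sfb}_{\SO_N}$ together with the map $\bm$ to $\Gr_{\SO_N}$: the first factor is acted on by $\SO(N-1,\bO)$, and the second factor lives over $\Gr_{\SO_N}$ in such a way that $\bm$ is $\SO(N-1,\bO)$-equivariant, with the $\SO(N-1,\bO)$-orbit through a generic point of the image being well-defined since the source is irreducible. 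So $\BO^{\lambda_\sfs}_{\lambda_\sfb}$ is genuinely well-defined, and its closure is $\bm$ of the whole convolution variety.

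For injectivity I would extract two pieces of combinatorial data from the orbit $\BO^{\lambda_\sfs}_{\lambda_\sfb}$ that recover $\lambda_\sfs$ and $\lambda_\sfb$ separately. The coweight $\lambda_\sfb$ is recovered as the image of $\BO^{\lambda_\sfs}_{\lambda_\sfb}$ under the (coarser) $\SO(N,\bO)$-orbit map $\Gr_{\SO_N}\to\SO(N,\bO)\backslash\Gr_{\SO_N}=\Lambda_\sfb^+$: indeed a generic point of the convolution variety maps into the open $\SO(N,\bO)$-orbit $\Gr^{\lambda_\sfb}_{\SO_N}$ inside $\ol\Gr{}^{\lambda_\sfb}_{\SO_N}$ because the convolution morphism $\ol\Gr{}^{\mu}_{\SO_{N-1}}\wtimes\ol\Gr{}^{\nu}_{\SO_N}\to\Gr_{\SO_N}$ (induced from the embedding $\SO_{N-1}\hookrightarrow\SO_N$) is birational onto $\ol\Gr{}^{\nu}_{\SO_N}$ for $\mu=0$ and semismall in general, so the image of the interior is dense in $\ol\Gr{}^{\nu}$; pushing the $\SO(N-1,\bO)$-orbit structure forward recovers the $\SO(N-1,\bF)$-orbit on $\Gr_{\SO_{N-1}}\times\Gr_{\SO_N}$ lying over $\Gr^{\mu}_{\SO_{N-1}}\times\Gr^{\nu}_{\SO_N}$, which by sphericity of $X$ pins down $\mu=\lambda_\sfs$. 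More concretely, passing through the identification $\SO(N-1,\bO)\backslash\Gr_{\SO_N}\cong\SO(N-1,\bF)\backslash(\Gr_{\SO_{N-1}}\times\Gr_{\SO_N})$ used in Lemma~\ref{sake}, a generic point of $\BO^{\lambda_\sfs}_{\lambda_\sfb}$ corresponds to a lattice pair whose relative positions are exactly $(\lambda_\sfs,\lambda_\sfb)$, so the pair is recoverable from the orbit. This gives injectivity.

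For surjectivity I would then invoke Lemma~\ref{sake}: the set $\SO(N-1,\bO)\backslash\Gr_{\SO_N}$ is in bijection with the monoid of dominant coweights of $\SO_{N-1}\times\SO_N$, which is precisely $\Lambda_\sfs^+\times\Lambda_\sfb^+$ (using the blinking identification of \S\ref{setup} and the remark that $\SO_{N-1}^\vee,\SO_N^\vee$ are $\SO(V_0),\Sp(V_1)$ in the appropriate order). Since $(\lambda_\sfs,\lambda_\sfb)\mapsto\BO^{\lambda_\sfs}_{\lambda_\sfb}$ is an injection between two sets both indexed bijectively by $\Lambda_\sfs^+\times\Lambda_\sfb^+$, and $\Lambda_\sfs^+\times\Lambda_\sfb^+$ has no proper self-injections that are not bijections only if we know the map is "eventually onto" — so to be safe I would instead argue surjectivity directly: any $\SO(N-1,\bO)$-orbit on $\Gr_{\SO_N}$ has a point in some $\ol\Gr{}^{\lambda_\sfb}_{\SO_N}$, and applying a suitable element of $\SO(N-1,\bF)$ (i.e. an element of the loop group of the smaller group) one can move it into $\Gr^{0}_{\SO_{N-1}}\times\Gr^{\lambda_\sfb}_{\SO_N}$ composed with an $\SO(N-1,\bO)$-translate landing it in the image of $\bm$ for the appropriate $\lambda_\sfs$; this realizes every orbit as some $\BO^{\lambda_\sfs}_{\lambda_\sfb}$. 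The main obstacle I anticipate is the injectivity step — specifically, verifying cleanly that the generic point of the convolution variety lands in the open $\SO(N,\bO)$-orbit of the second factor and that the induced orbit datum on the first factor is exactly $\lambda_\sfs$ rather than something smaller; this requires a genericity argument about the fiber of $\bm$, essentially the statement that $\bm$ restricted to the product of open Schubert cells is dominant onto $\BO^{\lambda_\sfs}_{\lambda_\sfb}$, which I would handle by a dimension count combined with irreducibility of the convolution variety.
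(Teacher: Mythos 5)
Your strategy (extract $(\lambda_\sfs,\lambda_\sfb)$ as invariants of the orbit, then count) is genuinely different from the paper's, which reduces everything to the general linear case: it imports from \cite{fgt} an explicit complete set of lattice representatives $L_{\bmu,\bnu}$ for the $\GL(N-1,\bO)$-orbits on $\Gr_{\GL_N}$, notes that $\BO_{\bmu,\bnu}$ is open in the image of the corresponding convolution, and then realizes each $\SO(N-1,\bO)$-orbit as a connected component of the $\varsigma$-fixed locus of some $\BO_{\bmu,\bnu}$, matching components of the fixed locus of the convolution diagram with orthogonal and symplectic Schubert varieties. Your injectivity step has a genuine gap. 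The claim that a generic point of $\BO^{\lambda_\sfs}_{\lambda_\sfb}$ lies in the $\SO(N,\bO)$-orbit $\Gr^{\lambda_\sfb}_{\SO_N}$ is false: the image of $\bm$ is not contained in $\ol\Gr{}^{\lambda_\sfb}_{\SO_N}$. Already for $\lambda_\sfb=0$ the image of $\bm$ is $\ol\Gr{}^{\lambda_\sfs}_{\SO_{N-1}}\subset\Gr_{\SO_N}$, whose generic point lies in the $\SO(N,\bO)$-orbit attached to (the dominant $\SO_N$-coweight corresponding to) $\lambda_\sfs$, not to $0$; in general the $\SO(N,\bO)$-relative position of a generic point of $\BO^{\lambda_\sfs}_{\lambda_\sfb}$ from the base point is the generic composition of $\lambda_\sfs$ and $\lambda_\sfb$ (compare the signature $\bta$ computed in the proof of Lemma~\ref{connected}). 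Your fallback, that the corresponding pair in $\Gr_{\SO_{N-1}}\times\Gr_{\SO_N}$ ``has relative positions exactly $(\lambda_\sfs,\lambda_\sfb)$,'' does not repair this: the distances of $L_1$ and $L_2$ from the respective base points are not invariants of the $\SO(N-1,\bF)$-orbit of the pair, since $\SO(N-1,\bF)$ acts transitively on the first factor. So no invariant separating $\BO^{\lambda_\sfs}_{\lambda_\sfb}$ from $\BO^{\mu_\sfs}_{\mu_\sfb}$ has actually been produced.

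The surjectivity half is also not closed. You correctly observe that an injection of the countable set $\Lambda_\sfs^+\times\Lambda_\sfb^+$ into itself need not be onto, so Lemma~\ref{sake} cannot be used as a pure counting statement (and the paper explicitly declines to identify its valuation-theoretic bijection with the convolution one). But the direct argument you substitute --- moving a point into $\Gr^{0}_{\SO_{N-1}}\times\Gr^{\lambda_\sfb}_{\SO_N}$ ``for the appropriate $\lambda_\sfs$'' --- never explains how $\lambda_\sfs$ and $\lambda_\sfb$ are read off from a given orbit, which is precisely what must be proved. A workable repair is essentially the paper's route: exhibit an explicit representative of every orbit (the fixed-point/lattice representatives $(L'_\bmu,L_\bnu)$) and verify directly that it is generic in the image of $\bm$ for exactly one pair $(\lambda_\sfs,\lambda_\sfb)$.
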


\begin{proof}
  We start with a similar parametrization of the set of $\GL(N-1,\bO)$-orbits in $\Gr_{\GL_N}$
  or equivalently, of the set of $\GL(N-1,\bF)$-orbits in $\Gr_{\GL_{N-1}}\times\Gr_{\GL_N}$.
  We choose a basis $e_1,\ldots,e_N$
  in the defining representation $\BC^N$ of $\GL_N$, so that the defining representation of
  $\GL_{N-1}$ is spanned by $e_1,\ldots,e_{N-1}$. Then one can choose the following set of
  representatives of $\GL(N-1,\bF)$-orbits in $\Gr_{\GL_{N-1}}\times\Gr_{\GL_N}$, as follows from the
  proof of~\cite[Proposition 8]{fgt}.
  Recall that $\Gr_{\GL_{N-1}}$ (resp.\ $\Gr_{\GL_N}$) is the moduli space of lattices
  in $\bF\otimes\BC^{N-1}$ (resp.\ in $\bF\otimes\BC^N$). Given signatures (non-increasing
  sequences of integers) \[\bmu=(\mu_1\geq\mu_2\geq\ldots\geq\mu_{N-1}),\
  \bnu=(\nu_1\geq\nu_2\geq\ldots\geq\nu_N)\] we consider the lattices
  \[L'_\bmu:=\bO t^{\mu_1}e_1\oplus\ldots\oplus\bO t^{\mu_{N-1}}e_{N-1}\subset\bF\otimes\BC^{N-1},\]
  \[L_\bnu:=\bO t^{-\nu_1}(e_1+e_N)\oplus\ldots\oplus\bO t^{-\nu_{N-1}}(e_{N-1}+e_N)\oplus \bO t^{-\nu_N}e_N
  \subset\bF\otimes\BC^N.\]
  Such pairs form a complete set of representatives of $\GL(N-1,\bF)$-orbits on
  $\Gr_{\GL_{N-1}}\times\Gr_{\GL_N}$ as $\bmu$ (resp.\ $\bnu$) runs through the set of all length $N-1$
  (resp.\ length $N$) signatures. Hence the following set of lattices in $\bF\otimes\BC^N$
  \[\{L_{\bmu,\bnu}:=\bO(t^{-\mu_1-\nu_1}e_1+t^{-\nu_1}e_N)\oplus\ldots\oplus
  \bO(t^{-\mu_{N-1}-\nu_{N-1}}e_{N-1}+t^{-\nu_{N-1}}e_N)\oplus\bO t^{-\nu_N}e_N\}\]
  is a complete set of representatives of $\GL(N-1,\bO)$-orbits in $\Gr_{\GL_N}$.
  Clearly, $L_{\bmu,\bnu}$ lies in the image of the convolution morphism
  $\bm\colon \ol\Gr{}^{\bmu}_{\GL_{N-1}}\wtimes\ol\Gr{}^{\bnu}_{\GL_N}\to\Gr_{\GL_N}$,
  and the orbit $\BO_{\bmu,\bnu}:=\GL(N-1,\bO)\cdot L_{\bmu,\bnu}$ is open in the image of $\bm$.

We return back to special orthogonal groups, and realize $\SO_M$ as the connected component
of invariants of an involution of $\GL_M$. Accordingly, $\Gr_{\SO_M}$ is a union of connected
components of the fixed point set of the corresponding involution $\varsigma$ of $\Gr_{\GL_M}$.
It follows that any $\SO(N-1,\bO)$-orbit in $\Gr_{\SO_N}$ is a connected component of the
fixed point set $\BO_{\bmu,\bnu}^\varsigma$ of an appropriate $\GL(N-1,\bO)$-orbit in $\Gr_{\GL_N}$.
Recall that the convolution diagram $\ol\Gr{}^{\bmu}_{\GL_{N-1}}\wtimes\ol\Gr{}^{\bnu}_{\GL_N}$
is a fibre bundle over $\ol\Gr{}^{\bmu}_{\GL_{N-1}}$ with fibers isomorphic to $\ol\Gr{}^{\bnu}_{\GL_N}$,
and the convolution morphism
$\bm\colon \ol\Gr{}^{\bmu}_{\GL_{N-1}}\wtimes\ol\Gr{}^{\bnu}_{\GL_N}\to\ol\BO_{\bmu,\bnu}$ is
a birational isomorphism (more precisely, $\bm$ is an isomorphism over
$\BO_{\bmu,\bnu}\subset\ol\BO_{\bmu,\bnu}$). It follows that for a connected component
$\BO_{\bmu,\bnu}^{\varsigma,0}$ of
$\BO_{\bmu,\bnu}^\varsigma$ there are appropriate irreducible components of the fixed point sets
$(\ol\Gr{}^{\bmu}_{\GL_{N-1}})^{\varsigma,0}\subset(\ol\Gr{}^{\bmu}_{\GL_{N-1}})^\varsigma,\
(\ol\Gr{}^{\bnu}_{\GL_N})^{\varsigma,0}\subset(\ol\Gr{}^{\bnu}_{\GL_N})^\varsigma$ such that
$\bm$ induces a birational isomorphism to the closure $\ol\BO{}_{\bmu,\bnu}^{\varsigma,0}$ from the fibre
bundle over $(\ol\Gr{}^{\bmu}_{\GL_{N-1}})^{\varsigma,0}$ with fibers isomorphic to
$(\ol\Gr{}^{\bnu}_{\GL_N})^{\varsigma,0}$. However, any irreducible component
$(\ol\Gr{}^{\bmu}_{\GL_{N-1}})^{\varsigma,0}$ (resp.\ $(\ol\Gr{}^{\bnu}_{\GL_N})^{\varsigma,0}$) coincides with
$\ol\Gr{}^{\lambda_\sfs}_{\SO_{N-1}}$ (resp.\ with $\ol\Gr{}^{\lambda_\sfb}_{\SO_N}$) for appropriate
coweights $\lambda_\sfs,\lambda_\sfb$.

The lemma is proved.
\end{proof}

We denote by $\IC^{\lambda_\sfs}_{\lambda_\sfb}\in\Perv_{\SO(N-1,\bO)}(\Gr_{\SO_N})$ the intermediate extension
of the constant local system on $\BO^{\lambda_\sfs}_{\lambda_\sfb}$. We will denote $\IC^0_0$ by $E_0$ for
short.

\begin{lem}
  \label{connected}
  Any $\SO(N-1,\bO)$-equivariant irreducible perverse sheaf on $\Gr_{\SO_N}$ is of the form
  $\IC^{\lambda_\sfs}_{\lambda_\sfb}$.
\end{lem}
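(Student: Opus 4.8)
By standard equivariant perverse sheaf theory, together with \lemref{sake?}, any $\SO(N-1,\bO)$-equivariant irreducible perverse sheaf on $\Gr_{\SO_N}$ is the intermediate extension $\IC(\BO^{\lambda_\sfs}_{\lambda_\sfb},\CL)$ of an irreducible $\SO(N-1,\bO)$-equivariant local system $\CL$ on a unique orbit $\BO^{\lambda_\sfs}_{\lambda_\sfb}$. So the plan is to show that on each orbit the only such $\CL$ is the constant sheaf $\BC$, in which case $\IC(\BO^{\lambda_\sfs}_{\lambda_\sfb},\BC)=\IC^{\lambda_\sfs}_{\lambda_\sfb}$ and we are done. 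Since $\SO(N-1,\bO)$ is connected (it is an extension of $\SO(N-1,\BC)$ by a pro-unipotent group), $\SO(N-1,\bO)$-equivariant local systems on $\BO^{\lambda_\sfs}_{\lambda_\sfb}\cong\SO(N-1,\bO)/\mathsf H$ are the same as finite-dimensional representations of the component group $\pi_0(\mathsf H)$ of the stabilizer $\mathsf H=\Stab_{\SO(N-1,\bO)}(x)$ of a point $x$ of the orbit. Thus the whole statement is reduced to the claim that every such stabilizer $\mathsf H$ is connected.

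To prove connectedness of $\mathsf H$ I would work in the lattice model used in the proof of \lemref{sake?}. There, $\BO^{\lambda_\sfs}_{\lambda_\sfb}$ is a connected component of the $\varsigma$-fixed locus $\BO_{\bmu,\bnu}^\varsigma$ of a $\GL(N-1,\bO)$-orbit $\BO_{\bmu,\bnu}\subset\Gr_{\GL_N}$, for suitable signatures $\bmu,\bnu$. First one checks that $\mathsf P:=\Stab_{\GL(N-1,\bO)}(L_{\bmu,\bnu})$ is connected: since $\bm\colon\ol\Gr{}^{\bmu}_{\GL_{N-1}}\wtimes\ol\Gr{}^{\bnu}_{\GL_N}\to\ol\BO_{\bmu,\bnu}$ is an isomorphism over $\BO_{\bmu,\bnu}$ and restricts, over the Schubert cell $\Gr{}^{\bmu}_{\GL_{N-1}}$, to an $\ol\Gr{}^{\bnu}_{\GL_N}$-bundle, $\mathsf P$ is an extension of $\GL(N-1,\bO)\cap t^{\bmu}\GL(N-1,\bO)t^{-\bmu}$ (connected, with reductive quotient a product of general linear groups) by the analogous connected stabilizer of a point in a single fibre; this is essentially the content of the analysis in \cite[proof of Proposition~8]{fgt}. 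Then $\mathsf H=\SO(N-1,\bO)\cap\mathsf P$, and to see that this closed subgroup is connected I would compute its pro-unipotent radical and its reductive quotient directly from the $\bO$-module filtration of $L_{\bmu,\bnu}$ together with the bilinear form: the reductive quotient is a product of Levi subgroups of parabolics in $\SO(V_0)$ and $\Sp(V_1)$, hence a product of groups of type $\SO_k$, $\Sp_{2l}$, $\GL_m$, all of which are connected; exactly as for the honest parahoric stabilizers of $\SO(N,\bO)$ acting on $\Gr_{\SO_N}$. Equivalently, one may descend the connectedness of $\mathsf P$ through $\varsigma$, using this explicit shape of $\mathsf P$.

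The only real obstacle is this last step: fixed points of an involution on a connected reductive group need not be connected (e.g.\ $\mathrm O_n\subset\GL_n$), so the argument must genuinely exploit the structure of $\mathsf P$ coming from the lattice/convolution picture — concretely, the fact that only special orthogonal and symplectic groups, never full orthogonal groups, appear among the reductive quotients. An alternative route that bypasses the involution entirely is to exhibit an $\SO(N-1,\bO)$-equivariant tower of affine-space bundles on $\BO^{\lambda_\sfs}_{\lambda_\sfb}$, read off from the convolution diagram $\ol\Gr{}^{\lambda_\sfs}_{\SO_{N-1}}\wtimes\ol\Gr{}^{\lambda_\sfb}_{\SO_N}\xrightarrow{\bm}\Gr_{\SO_N}$; a simply connected orbit again forces $\pi_0(\mathsf H)=1$, and the crux is then the verification of the affine-bundle structure.
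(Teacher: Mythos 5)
Your proposal follows essentially the same route as the paper: reduce to connectedness of stabilizers via the standard dictionary between equivariant irreducible perverse sheaves on an orbit and representations of the component group of a point stabilizer, then compute that stabilizer in the lattice model of Lemma~\ref{sake?} by first treating the $\GL(N-1)$-stabilizer and descending through the involution $\varsigma$. The paper handles the crux you flag by choosing representatives $(L'_\bmu,L_\bnu)$ with $L'_\bmu$ dominant and $L_\bnu$ antidominant (so that the $\SO(N-1,\bF)$-stabilizer automatically lies in $\SO(N-1,\bO)$) and reading off the reductive quotient as $\SO_{m_0}\times\prod_{i>0}\GL_{m_i}$ from the multiplicities of the shuffled sequence $\boldsymbol{\beta}$ --- only special orthogonal and general linear factors appear, exactly as you predict.
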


\begin{proof}
  We have to check that the stabilizer in $\SO(N-1,\bO)$ of a point in
  $\Gr_{\SO_N}$ is connected. Equivalently,
  we have to check that the stabilizer in $\SO(N-1,\bF)$ of a point in
  $\Gr_{\SO_{N-1}}\times\Gr_{\SO_N}$ is connected.
  It follows from the proof of~Lemma~\ref{sake?} that the following list of pairs
  $(L'_\bmu,L_\bnu)$ forms a complete set of representatives of $\SO(N-1,\bF)$-orbits
  in $\Gr_{\SO_{N-1}}\times\Gr_{\SO_N}$ (for an appropriate choice of an involution of $\GL_M$
  producing $\SO_M$ as the connected component of the fixed point set):

  In the odd case~\ref{OSE}(a)
  \[\bnu=(\nu_1\geq\nu_2\geq\ldots\geq\nu_n\geq0\geq-\nu_n\geq-\nu_{n-1}\ldots\geq-\nu_1),\]
  \[\bmu=(\mu_1\geq\mu_2\geq\ldots\geq\mu_{n-1}\geq\mu_n\geq-\mu_n\geq-\mu_{n-1}\geq\ldots\geq-\mu_1),\]
  also we allow sequences ({\em not} signatures) $\bmu$ such that
\[\bmu=(\mu_1\geq\mu_2\geq\ldots\geq\mu_{n-1}\geq-\mu_n\leq\mu_n\geq-\mu_{n-1}\geq\ldots\geq-\mu_1),\]
 where $(\mu_1\geq\mu_2\geq\ldots\geq\mu_n>0)$ is a partition. 
In the even case~\ref{OSE}(b)
  \[\bmu=(\mu_1\geq\mu_2\geq\ldots\geq\mu_{n-1}\geq0\geq-\mu_{n-1}\geq\ldots\geq-\mu_1),\]
\[\bnu=(\nu_1\geq\nu_2\geq\ldots\geq\nu_{n-1}\geq\nu_n\geq-\nu_n\geq-\nu_{n-1}\geq\ldots\geq-\nu_1),\]
  also we allow sequences ({\em not} signatures) $\bnu$ such that
\[\bnu=(\nu_1\geq\nu_2\geq\ldots\geq\nu_{n-1}\geq-\nu_n\leq\nu_n\geq-\nu_{n-1}\geq\ldots\geq-\nu_1),\]
where $(\nu_1\geq\nu_2\geq\ldots\geq\nu_n>0)$ is a partition.

  Note that in the odd case the pair $(L'_\bmu,L_\bnu)$ lies in the $\GL(N,\bF)$-orbit
  in the ambient product
  $\Gr_{\GL_N}\times\Gr_{\GL_N}\supset\Gr_{\GL_{N-1}}\times\Gr_{\GL_N}$ corresponding to a signature
  $\bta$, where
  \[\bta:=(\mu_1+\nu_1\geq\ldots\geq\mu_{n-1}+\nu_{n-1}\geq|\mu_n|+\nu_n\geq0\geq-|\mu_n|-\nu_n
  \geq\ldots\geq-\mu_1-\nu_1),\]
  and in the even case the pair $(L'_\bmu,L_\bnu)$ lies in the $\GL(N,\bF)$-orbit
  in the ambient product
  $\Gr_{\GL_N}\times\Gr_{\GL_N}\supset\Gr_{\GL_{N-1}}\times\Gr_{\GL_N}$ corresponding to a signature
  $\bta$, where
  \[\bta:=(\mu_1+\nu_1\geq\ldots\geq\mu_{n-1}+\nu_{n-1}\geq|\nu_n|\geq-|\nu_n|\geq-\mu_{n-1}-\nu_{n-1}
  \geq\ldots\geq-\mu_1-\nu_1).\]

  In all the cases listed, $L'_\bmu$ corresponds to a dominant coweight of $\SO_{N-1}$,
  while $L_\bnu$ corresponds to an {\em anti}\!\! dominant coweight of $\SO_N$. It follows that
  $\on{Stab}_{\SO(N-1,\bF)}(L'_\bmu,L_\bnu)\subset\SO(N-1,\bO)$.
  Similarly, $\on{Stab}_{\GL(N-1,\bF)}(L'_\bmu,L_\bnu)\subset\GL(N-1,\bO)$. The latter stabilizer
  has the connected unipotent radical and the reductive quotient
  $\on{Stab}^{\on{red}}_{\GL(N-1,\bF)}(L'_\bmu,L_\bnu)\simeq\prod_{i\in\BZ}\GL_{m_i}$, where
  $m_i$ is defined as follows. We consider a sequence $\boldsymbol{\alpha}$ of length $2N-1$
  obtained as a shuffle of $\bnu$ and $\bmu$, i.e.\ in the odd case
  \[\boldsymbol{\alpha}=(\nu_1,\mu_1,\nu_2,\mu_2,\ldots,\nu_n,|\mu_n|,0,-|\mu_n|,-\nu_n,\ldots,
  -\mu_1,-\nu_1),\]
  while in the even case
  \[\boldsymbol{\alpha}=(\nu_1,\mu_1,\ldots,\nu_{n-1},\mu_{n-1},|\nu_n|,0,-|\nu_n|,-\mu_{n-1},-\nu_{n-1},
  \ldots,-\mu_1,-\nu_1).\]
  Now we consider a signature $\boldsymbol{\beta}$ of length $2N-2$ formed by the sums of two
  consecutive terms of $\boldsymbol{\alpha}$:
  \[(\beta_1=\nu_1+\mu_1,\ \beta_2=\mu_1+\nu_2,\ \beta_3=\nu_2+\mu_2,\ \ldots,\
  \beta_{2N-2}=-\mu_1-\nu_1).\]
  Let $n_i$ be the multiplicity of an integer $i$ in the sequence $\boldsymbol{\beta}$.
  Finally, $m_i:=\lfloor n_i/2\rfloor$.
  
  We see in particular that
  $\on{Stab}^{\on{red}}_{\GL(N-1,\bF)}(L'_\bmu,L_\bnu)$ and $\on{Stab}_{\GL(N-1,\bF)}(L'_\bmu,L_\bnu)$
  are both connected. Viewing $\SO_M$ as the connected component of an involution of $\GL_M$,
  we see that $\on{Stab}_{\SO(N-1,\bF)}(L'_\bmu,L_\bnu)$ has the connected unipotent radical and
  the reductive quotient
  $\on{Stab}^{\on{red}}_{\SO(N-1,\bF)}(L'_\bmu,L_\bnu)\simeq\SO_{m_0}\times\prod_{i>0}\GL_{m_i}$ that
  is connected as well.
\end{proof}

\subsection{Deequivariantized Ext algebra}
\label{deeq}
In the blinking notation of~\S\ref{setup} let $\IC_{\lambda_\sfs}$ (resp.\ $\IC_{\lambda_\sfb}$)
stand for the IC-sheaf of the orbit closure
$\IC(\ol\Gr{}_{\SO_{N-1}}^{\lambda_\sfs})$ (resp.\ $\IC(\ol\Gr{}_{\SO_N}^{\lambda_\sfb})$). Then the
convolution $\IC_{\lambda_\sfs}*\IC_{\lambda_\sfb}=\IC^{\lambda_\sfs}_0\star\IC^0_{\lambda_\sfb}$ (the fusion)
is the direct sum of $\IC^{\lambda_\sfs}_{\lambda_\sfb}$ and some sheaves with support in the boundary
of $\BO^{\lambda_\sfs}_{\lambda_\sfb}$. Actually we will see in~Corollary~\ref{small} below that
$\IC_{\lambda_\sfs}*\IC_{\lambda_\sfb}=\IC^{\lambda_\sfs}_0\star\IC^0_{\lambda_\sfb}=\IC^{\lambda_\sfs}_{\lambda_\sfb}$.

We restrict the left action of $D^b_{\SO(N-1,\bO)}(\Gr_{\SO_{N-1}})$
(resp.\ the right action of $D^b_{\SO(N,\bO)}(\Gr_{\SO_N})$) on $D^b_{\SO(N-1,\bO)}(\Gr_{\SO_N})$ to
the left action of $\Perv_{\SO(N-1,\bO)}(\Gr_{\SO_{N-1}})\cong\Rep(\SO_{N-1}^\vee)$
(resp.\ to the right action of $\Perv_{\SO(N,\bO)}(\Gr_{\SO_N})\cong\Rep(\SO_N^\vee)$).
Let $D_{\SO(N-1,\bO)}^\deeq(\Gr_{\SO_N})$ denote the corresponding deequivariantized category
(see~\cite{ag} in the setting of abelian categories and~\cite{1-aff} in the setting
of dg-categories).
We have

\begin{multline}
  \label{def deeq a}
  \RHom_{D^\deeq_{\SO(N-1,\bO)}(\Gr_{\SO_N})}(\CF,\CG)\\
  =\bigoplus_{\lambda_\sfs\in\Lambda_\sfs^+,\ \lambda_\sfb\in\Lambda_\sfb^+}\RHom_{D^b_{\SO(N-1,\bO)}(\Gr_{\SO_N})}
  (\CF,\IC_{\lambda_\sfs}*\CG*\IC_{\lambda_\sfb})\otimes V_{\lambda_\sfs}^*\otimes V_{\lambda_\sfb}^*
\end{multline}
(recall that the geometric Satake equivalence takes $\IC_{\lambda_\sfs}$ to $V_{\lambda_\sfs}$,
and $\IC_{\lambda_\sfb}$ to $V_{\lambda_\sfb}$, notations of~\S\ref{setup}).

\begin{lem}
  \label{formality}
  The dg-algebra $\RHom_{D^\deeq_{\SO(N-1,\bO)}(\Gr_{\SO_N})}(E_0,E_0)$ is formal, i.e.\ it is
  quasiisomorphic to the graded algebra $\Ext_{D^\deeq_{\SO(N-1,\bO)}(\Gr_{\SO_N})}^\bullet(E_0,E_0)$ with trivial
  differential.
\end{lem}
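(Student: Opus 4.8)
The plan is to establish formality by exhibiting a $\BZ$-grading (a "weight" grading, extra to the cohomological one) on the Ext algebra $\Ext^\bullet_{D^\deeq}(E_0,E_0)$ that is \emph{internal} — i.e.\ arises geometrically — and then invoking the standard principle that a dg-algebra carrying a grading such that the $i$-th cohomology is pure of weight $i$ (or more generally, a grading for which only the "diagonal" bidegrees survive in cohomology) is automatically formal. Concretely, I would first use the equivalences $\boldsymbol\beta$ of \cite[Theorem 5]{bf} recalled in~\S\ref{osp}, which identify $D^b_{\SO(N-1,\bO)}(\Gr_{\SO_{N-1}})$ and $D^b_{\SO(N,\bO)}(\Gr_{\SO_N})$ with categories of perfect dg-modules over the \emph{formal} graded algebras $\Sym(\fg_\sfs[-2])$ and $\Sym(\fg_\sfb[-2])$; the key point is that these algebras live purely in even cohomological degrees, and their gradings match a loop-rotation (or cohomological-shift) weight grading on the geometric side. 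Pulling this structure back through~\eqref{def deeq a}, the summands $\RHom_{D^b}(E_0,\IC_{\lambda_\sfs}*E_0*\IC_{\lambda_\sfb})\otimes V_{\lambda_\sfs}^*\otimes V_{\lambda_\sfb}^*$ acquire a weight grading, and I would check that on each summand the cohomology is concentrated in a single weight (equal to the cohomological degree), using the parity vanishing of stalks/costalks of the relevant IC-sheaves along the cellular-type stratification coming from the $\SO(N-1,\bO)$-orbits of Lemmas~\ref{sake?} and~\ref{connected}.

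The mechanism I would use for the last implication is Ginzburg's theorem \cite{g2}, as flagged in the Remark after Theorem~\ref{main}: over a space with an affine paving along which the sheaves are smooth, the equivariant Ext algebra between IC-sheaves computes as $\Hom$ over the equivariant cohomology ring between equivariant cohomologies of the IC-sheaves, and the latter is manifestly a \emph{graded} (hence formal) object. The subtlety — and this is exactly the obstacle the Remark warns about — is that there is \emph{no} stratification of $\Gr_{\SO_N}$ by affine cells along which \emph{all} the $\SO(N-1,\bO)$-equivariant IC-sheaves are simultaneously smooth, so Ginzburg's theorem does not apply off the shelf here. The way around this is to work not with the full equivariant category but with $E_0=\IC^0_0$ alone, and to use that $\Ext^\bullet_{D^\deeq}(E_0,E_0)$ is built, via~\eqref{def deeq a}, out of $\RHom$'s \emph{into convolutions} $\IC_{\lambda_\sfs}*E_0*\IC_{\lambda_\sfb}$; because $E_0$ is the unit (the IC-sheaf of the point stratum) up to the convolution action, these convolutions are direct sums of the $\IC^{\lambda_\sfs}_{\lambda_\sfb}$ (cf.\ the discussion preceding Corollary~\ref{small}), and one reduces to computing stalks of $\IC^{\lambda_\sfs}_{\lambda_\sfb}$ — a \emph{parity} statement — rather than needing a global affine paving.

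So the key steps, in order, are: (1) transport the weight/loop-rotation grading from the $\Gr_{\SO_{N-1}}$- and $\Gr_{\SO_N}$-sides to a bigrading on the dg-algebra $\RHom_{D^\deeq}(E_0,E_0)$ via~\eqref{def deeq a} and the $\boldsymbol\beta$-equivalences; (2) prove a parity-vanishing statement for the stalks and costalks of the sheaves $\IC^{\lambda_\sfs}_{\lambda_\sfb}$ appearing in $\IC_{\lambda_\sfs}*E_0*\IC_{\lambda_\sfb}$ — here one can induct on the partial order of orbits, using the convolution/birational-isomorphism structure from the proof of Lemma~\ref{sake?} together with the known parity of stalks of ordinary spherical IC-sheaves on $\Gr_{\SO_{N-1}}$ and $\Gr_{\SO_N}$; (3) conclude that in each graded summand the cohomology sits in a single weight matching the cohomological degree, so that the differential of the dg-algebra, being weight-preserving, must vanish on cohomology, whence the dg-algebra is formal by the usual spectral-sequence / minimal-model argument. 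I expect step (2), the parity vanishing for the non-spherical sheaves $\IC^{\lambda_\sfs}_{\lambda_\sfb}$, to be the main obstacle, precisely because (as the Remark notes) these sheaves are \emph{not} smooth along any affine paving and one cannot simply quote the semismallness/parity results available in the classical spherical setting; the induction on orbits via the convolution morphism $\bm$ is the tool I would lean on to get around this, with Corollary~\ref{small} (that $\IC_{\lambda_\sfs}*\IC_{\lambda_\sfb}=\IC^{\lambda_\sfs}_{\lambda_\sfb}$, a smallness statement) doing much of the work.
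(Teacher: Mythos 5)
Your overall strategy --- purity of the relevant Ext groups implies formality --- is the same as the paper's, and you correctly identify the purity/parity statement as the crux. But the mechanism you propose for establishing it has two problems. First, you lean on Corollary~\ref{small} (that $\IC_{\lambda_\sfs}*\IC_{\lambda_\sfb}=\IC^{\lambda_\sfs}_{\lambda_\sfb}$) to ``do much of the work,'' but that corollary is deduced in the paper \emph{from} the equivalence $\Phi$, whose construction rests on Lemma~\ref{formality}; at the point where formality must be proved, only the weaker fact that $\IC^{\lambda_\sfs}_{\lambda_\sfb}$ is a direct summand of $\IC_{\lambda_\sfs}*\IC_{\lambda_\sfb}$ is available, so your argument as stated is circular. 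Second, your step (2) --- parity vanishing of the stalks and costalks of all the sheaves $\IC^{\lambda_\sfs}_{\lambda_\sfb}$ at all orbits, by induction on the orbit order --- is far more than is needed and is essentially the content of Theorem~\ref{ic stalk}, which the paper can only prove \emph{after} the main theorem is established. You give no workable base case or inductive mechanism for it, and as the Remark you cite warns, there is no affine paving along which these sheaves are smooth, so the classical parity arguments do not transfer.

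The paper sidesteps all of this by exploiting that $E_0=\IC^0_0$ is supported at the base point $0\in\Gr_{\SO_N}$, so one only needs purity of the single costalk $i_0^!(\IC_{\lambda_\sfs}*\IC_{\lambda_\sfb})$. Reducing to minuscule coweights (every $\IC_{\lambda_\sfs}$, resp.\ $\IC_{\lambda_\sfb}$, is a summand of a convolution power of $\IC_{\varpi_{N-1}}$, resp.\ $\IC_{\varpi_N}$), the convolution becomes the pushforward of the constant sheaf from a smooth convolution diagram of quadrics, and purity of $i_0^!$ follows once the fiber $\bm^{-1}(0)$ is shown to be a union of cells. That cell decomposition is the real geometric input, obtained by two applications of Bialynicki--Birula: the loop-rotation $\BG_m$ identifies $\bm^{-1}(0)$ with the attractor to the fixed-point components $F_0$ over $0$, and a Cartan torus of $\SO_{N-1}$ with finitely many fixed points shows $F_0$ is itself a union of cells. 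This localization-at-the-base-point step, and the Bialynicki--Birula argument replacing any global paving, are the missing ideas in your proposal.
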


\begin{proof}
  The argument essentially repeats the one in the proof of~\cite[Lemma 3.9.1]{bfgt}.
  The desired result follows from the purity of
  $\Ext_{D^b_{\SO(N-1,\bO)}(\Gr_{\SO_N})}(E_0,\IC^{\lambda_\sfs}_{\lambda_\sfb})$.
  We know that $\IC^{\lambda_\sfs}_{\lambda_\sfb}$ is a direct summand in
  $\IC_{\lambda_\sfs}*\IC_{\lambda_\sfb}$, and it suffices
  to prove the purity of $i_0^!(\IC_{\lambda_\sfs}*\IC_{\lambda_\sfb})$ where $i_0$ stands for the closed
  embedding of the base point $0$ into $\Gr_{\SO_N}$.

Assume first that $N\geq4$.
  Let $\varpi_{N-1}$ (resp.\ $\varpi_N$) denote the minuscule fundamental coweight of $\SO_{N-1}$
  (resp.\ of $\SO_N$). The corresponding closed $\SO(N-1,\bO)$-orbit
  $\Gr^{\varpi_{N-1}}_{\SO_{N-1}}\subset\Gr_{\SO_{N-1}}$ (resp.\ closed $\SO(N,\bO)$-orbit
  $\Gr^{\varpi_N}_{\SO_N}\subset\Gr_{\SO_N}$) is isomorphic to a smooth $(N-3)$-dimensional quadric
  $Q^{N-3}$ (resp.\ to a smooth $(N-2)$-dimensional quadric $Q^{N-2}$). It is well known that
  for any $\lambda_\sfs\in\Lambda_\sfs^+$ (resp.\ $\lambda_\sfb\in\Lambda_\sfb^+$),
  $\IC_{\lambda_\sfs}$ is a
  direct summand in a suitable convolution power $\IC_{\varpi_{N-1}}*\cdots*\IC_{\varpi_{N-1}}$
  (resp.\ $\IC_{\lambda_\sfb}$ is a direct summand in $\IC_{\varpi_N}*\cdots*\IC_{\varpi_N}$)
  (equivalently, by the geometric Satake equivalence, the defining representation of a
  symplectic group (resp.\ of a special orthogonal group) generates its representations' category
  with respect to tensor products and direct summands~\cite{w}).
  Thus it suffices to prove the purity of
  \[i_0^!(\IC_{\varpi_{N-1}}*\cdots*\IC_{\varpi_{N-1}}*\IC_{\varpi_N}*\cdots*\IC_{\varpi_N}).\]
  The latter convolution is the direct image of the constant sheaf on the smooth convolution
  diagram $\Gr^{\varpi_{N-1}}_{\SO_{N-1}}\wtimes\cdots\wtimes\Gr^{\varpi_{N-1}}_{\SO_{N-1}}
  \wtimes\Gr^{\varpi_N}_{\SO_N}\wtimes\cdots\wtimes\Gr^{\varpi_N}_{\SO_N}
  \stackrel{\bm}{\longrightarrow}\Gr_{\SO_N}$. Hence it suffices to check that the fiber $\bm^{-1}(0)$
  over the base point is a union of cells. Now under the action of the loop rotation $\BG_m$,
  every point in an open neighbourhood of $0\in\Gr_{\SO_N}$ flows away from $0$. It follows that
  $\bm^{-1}(0)$ coincides with the $\BG_m$-attractor to the union $F_0$ of the $\BG_m$-fixed
  point components in the above convolution diagram lying over $0\in\Gr_{\SO_N}$.
  By the classical Bialynicki-Birula argument, this attractor is a union of cells if $F_0$
  itself is a union of cells. Finally, a Cartan subgroup of $\SO_{N-1}$ has finitely many
  fixed points in the above convolution diagram, and the same Bialynicki-Birula argument implies
  that $F_0$ is a union of cells.

  The proof for $N=3$ is essentially the same.  The only difference is that the standard
  (2-dimensional) representation of $G_\sfs=\SO_2\cong\BG_m$ corresponds to $\IC(Q^0)$ which
  is the sum of two skyscrapers of the two points of the ``$0$-dimensional quadric'' $Q^0$.
  After replacing $\IC_{\varpi_{N-1}}$ with $\IC(Q^0)$, the same argument goes through. Alternatively,
  since $\SO_2\cong\GL_1$, and $\SO_3\cong\on{PGL}_2$, our lemma in case $N=3$ directly follows
  from~\cite[Lemma 3.9.1]{bfgt}.
\end{proof}

We denote the dg-algebra
$\Ext^\bullet_{D^\deeq_{\SO(N-1,\bO)}(\Gr_{\SO_N})}(E_0,E_0)$ (with trivial differential)
by $\fE^\bullet$. Since it is an Ext-algebra in the deequivariantized category between objects
induced from the original category, it is automatically
equipped with an action of $\SO(V_0)\times\Sp(V_1)=\sG_{\bar0}$ (notations of~\S\ref{osp}),
and we can consider the corresponding triangulated category $D^{\sG_{\bar0}}_\perf(\fE^\bullet)$.

\begin{lem}
  \label{purity}
There is a canonical equivalence $D^{\sG_{\bar0}}_\perf(\fE^\bullet)\iso D^b_{\SO(N-1,\bO)}(\Gr_{\SO_N})$.
\end{lem}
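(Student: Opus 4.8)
The plan is to deduce \lemref{purity} from a general Koszul-duality / formality package applied to the IC-generator $E_0$, exactly in the spirit of \cite{bf} and \cite[\S3]{bfgt}, so I will first reduce to a statement about modules over the graded Ext-algebra $\fE^\bullet$. Concretely, one observes that $E_0=\IC^0_0$ generates $D^b_{\SO(N-1,\bO)}(\Gr_{\SO_N})$ as a triangulated category under the left $\Rep(\SO_{N-1}^\vee)$-action and right $\Rep(\SO_N^\vee)$-action: every $\IC^{\lambda_\sfs}_{\lambda_\sfb}$ is a summand of $\IC_{\lambda_\sfs}*E_0*\IC_{\lambda_\sfb}$, and by \lemref{connected} these $\IC^{\lambda_\sfs}_{\lambda_\sfb}$ exhaust the simple objects, hence $E_0$ is a (pro-)generator of the deequivariantized category $D^\deeq_{\SO(N-1,\bO)}(\Gr_{\SO_N})$. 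The first step, therefore, is to invoke the standard ``one-object'' reconstruction: a compactly generated dg-category with a single compact generator $X$ is equivalent to the derived category of dg-modules over $\RHom(X,X)$. Applied here, $D^\deeq_{\SO(N-1,\bO)}(\Gr_{\SO_N})\iso D_\perf\big(\RHom_{D^\deeq}(E_0,E_0)\big)$, and by \lemref{formality} the right-hand side is $D_\perf(\fE^\bullet)$ with $\fE^\bullet$ carrying its trivial differential.

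The second step is to reinstate equivariance. Deequivariantization with respect to the $\sG_{\bar0}=\SO(V_0)\times\Sp(V_1)$-action (using the geometric Satake identifications $\Perv_{\SO(N-1,\bO)}(\Gr_{\SO_{N-1}})\cong\Rep(G_\sfs)$ and $\Perv_{\SO(N,\bO)}(\Gr_{\SO_N})\cong\Rep(G_\sfb)$) is an equivalence onto $\sG_{\bar0}$-equivariant objects in the deequivariantized category; this is the content of \cite{ag} in the abelian setting and \cite{1-aff} in the dg setting. Thus one has $D^b_{\SO(N-1,\bO)}(\Gr_{\SO_N})\iso \big(D^\deeq_{\SO(N-1,\bO)}(\Gr_{\SO_N})\big)^{\sG_{\bar0}}$. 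Transporting the $\sG_{\bar0}$-action across the equivalence $D^\deeq\iso D_\perf(\fE^\bullet)$, it becomes the action on $\fE^\bullet$ already noted in the paragraph preceding the lemma (coming from the fact that $\fE^\bullet$ is an Ext-algebra between induced objects). Taking $\sG_{\bar0}$-equivariant objects on both sides then yields the desired $D^{\sG_{\bar0}}_\perf(\fE^\bullet)\iso D^b_{\SO(N-1,\bO)}(\Gr_{\SO_N})$. One must check that the equivariant structure on the generator $E_0$ is the trivial/tautological one, so that ``equivariant modules over $\fE^\bullet$'' in the $\sG_{\bar0}$-sense matches the category $D^{\sG_{\bar0}}_\perf(\fE^\bullet)$ as defined in \S\ref{osp}; this is automatic since $E_0$ descends from an honest $\SO(N-1,\bO)$-equivariant perverse sheaf on $\Gr_{\SO_N}$.

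The main obstacle is compatibility and finiteness bookkeeping rather than any single hard input. In the unbounded world the reconstruction is clean, but one needs to know that the compact objects of $D_\perf(\fE^\bullet)$ under the equivalence correspond precisely to the \emph{bounded} equivariant constructible category $D^b_{\SO(N-1,\bO)}(\Gr_{\SO_N})$, not something larger; here one uses that $\fE^\bullet$ is a graded algebra which is finitely generated and lives in nonnegative cohomological degrees (it is a quotient/subalgebra of a polynomial-type algebra governed by the quadratic moment maps), so its perfect derived category is generated by $\fE^\bullet$ itself and matches the constructible heart's derived category. In addition one must verify that the monoidal actions of the two spherical Hecke categories are carried through the chain of equivalences — deequivariantization is by construction compatible with the residual Hecke-module structures — but this is needed only later (for \thmref{main}), so for \lemref{purity} it suffices to track the underlying triangulated equivalence. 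I expect the delicate point to be the passage between the abelian-categorical deequivariantization of \cite{ag} and the dg-categorical one of \cite{1-aff}, together with the formality statement of \lemref{formality}, which is exactly what makes $\RHom(E_0,E_0)$ collapse to the graded algebra $\fE^\bullet$ and the reconstruction go through with trivial differential.
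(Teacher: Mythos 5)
Your proposal is correct and follows essentially the same route as the paper: the paper's proof of \lemref{purity} is literally ``same as \cite[Lemma 3.9.2]{bfgt}'', and that argument is exactly the package you describe --- $E_0$ generates the deequivariantized category via its Hecke translates (using \lemref{sake?} and \lemref{connected}), the one-generator reconstruction identifies it with dg-modules over $\RHom_{D^\deeq}(E_0,E_0)$, \lemref{formality} replaces this by $\fE^\bullet$ with trivial differential, and re-equivariantization with respect to $\sG_{\bar0}$ recovers $D^{\sG_{\bar0}}_\perf(\fE^\bullet)\iso D^b_{\SO(N-1,\bO)}(\Gr_{\SO_N})$.
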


\begin{proof}
  Same as the one of~\cite[Lemma 3.9.2]{bfgt}.
\end{proof}

\subsection{Equivariant cohomology}
\label{equi coh}
The affine Grassmannian $\Gr_{\SO_N}$ has two connected components $\Gr_{\SO_N}^\odd$ and
$\Gr_{\SO_N}^\even$ (recall that $N>2$).
 In the blinking notation of~\S\ref{setup}, the equivariant cohomology ring
$H^\bullet_{\SO(N,\bO)}(\Gr_{\SO_N}^\odd)=H^\bullet_{\SO(N,\bO)}(\Gr_{\SO_N}^\even)\cong\BC[T\Sigma_\sfb]$.
 This is a theorem of V.~Ginzburg~\cite{g1} (for a published
account see e.g.~\cite[Theorem 1]{bf}). It follows that
\[H^\bullet_{\SO(N-1,\bO)}(\Gr_{\SO_N}^\odd)=
H^\bullet_{\SO(N-1,\bO)}(\Gr_{\SO_N}^\even)\cong\BC[\Sigma_\sfs\times_{\Sigma_\sfb}T\Sigma_\sfb]\]
(with respect to the morphism $\varPi_{\sfs\sfb}\colon\Sigma_\sfs\to\Sigma_\sfb$,
notations of~\S\ref{setup}).

\begin{lem}
  \label{inj}
  For any $\lambda_\sfs\in\Lambda_\sfs^+,\ \lambda_\sfb\in\Lambda_\sfb^+$, the natural morphism
  \begin{multline*}\Ext_{D^b_{\SO(N-1,\bO)}(\Gr_{\SO_N})}(E_0,\IC^{\lambda_\sfs}_{\lambda_\sfb})\\
  \to\Hom_{H^\bullet_{\SO(N-1,\bO)}(\Gr_{\SO_N})}\left(H^\bullet_{\SO(N-1,\bO)}(\Gr_{\SO_N},E_0),
  H^\bullet_{\SO(N-1,\bO)}(\Gr_{\SO_N},\IC^{\lambda_\sfs}_{\lambda_\sfb})\right)
  \end{multline*}
  is injective.
  \end{lem}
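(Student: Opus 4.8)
The plan is to recast the ``natural morphism'' as a concrete map on (equivariant) Borel--Moore homology and then invoke the cellular structure already exhibited in the proof of~\lemref{formality}. First I would use that $E_0=\IC^0_0$ is the skyscraper $i_{0*}\BC$ at the base point $0\in\Gr_{\SO_N}$, so that adjunction gives a functorial identification $\Ext^\bullet_{D^b_{\SO(N-1,\bO)}(\Gr_{\SO_N})}(E_0,\CG)\cong H^\bullet_{\SO(N-1,\bO)}(i_0^!\CG)$, while $H^\bullet_{\SO(N-1,\bO)}(\Gr_{\SO_N},E_0)$ is the cyclic $H^\bullet_{\SO(N-1,\bO)}(\Gr_{\SO_N})$-module $H^\bullet_{\SO(N-1,\bO)}(\pt)$, generated by the unit (restriction to $0$ is a split surjection of rings). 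Hence an $H^\bullet_{\SO(N-1,\bO)}(\Gr_{\SO_N})$-module map out of it is determined by the image of $1$, and a chase of the adjunction identifies the composite of the comparison map with ``evaluation at $1$'' with the map
\[
H^\bullet_{\SO(N-1,\bO)}(i_0^!\CG)\longrightarrow H^\bullet_{\SO(N-1,\bO)}(\Gr_{\SO_N},\CG)
\]
induced by the counit $i_{0*}i_0^!\CG\to\CG$. Since evaluation at $1$ is injective, it suffices to prove that this counit-induced map is injective for $\CG=\IC^{\lambda_\sfs}_{\lambda_\sfb}$.

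Next, exactly as in the proof of~\lemref{formality}, I would write $\IC^{\lambda_\sfs}_{\lambda_\sfb}$ as a direct summand of a convolution $\IC_{\varpi_{N-1}}*\cdots*\IC_{\varpi_{N-1}}*\IC_{\varpi_N}*\cdots*\IC_{\varpi_N}$ of minuscule generators (replacing $\IC_{\varpi_{N-1}}$ by $\IC(Q^0)$ when $N=3$), which equals $\bm_*\underline\BC_{\mathcal Z}[\dim\mathcal Z]$ for the smooth proper convolution diagram $\bm\colon\mathcal Z\to\Gr_{\SO_N}$. The counit is a natural transformation, hence commutes with the idempotent projecting onto a direct summand, so injectivity for the whole convolution implies it for $\IC^{\lambda_\sfs}_{\lambda_\sfb}$. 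For $\CG=\bm_*\underline\BC_{\mathcal Z}[\dim\mathcal Z]$, proper base change identifies $i_0^!\CG$ with $\bm_{0*}$ of the restriction of $\underline\BC_{\mathcal Z}[\dim\mathcal Z]$ to $\bm^{-1}(0)$; since $\mathcal Z$ is smooth this identifies, up to shift, $H^\bullet_{\SO(N-1,\bO)}(i_0^!\CG)$ with the equivariant Borel--Moore homology $H^{\mathrm{BM}}_\bullet(\bm^{-1}(0))$ and, by Poincar\'e duality, $H^\bullet_{\SO(N-1,\bO)}(\Gr_{\SO_N},\CG)=H^\bullet_{\SO(N-1,\bO)}(\mathcal Z)[\dim\mathcal Z]$ with $H^{\mathrm{BM}}_\bullet(\mathcal Z)$, the counit-induced map becoming the proper pushforward $H^{\mathrm{BM}}_\bullet(\bm^{-1}(0))\to H^{\mathrm{BM}}_\bullet(\mathcal Z)$ along the closed embedding.

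Finally I would conclude by the Bialynicki-Birula argument already used for~\lemref{formality}: there is an affine paving of $\mathcal Z$ by cells stable under a maximal torus $T\subset\SO_{N-1}$ (take a generic one-parameter subgroup of $T$ together with the loop rotation), and $\bm^{-1}(0)$ is a union of cells of this paving. It then suffices to prove injectivity of the $T$-equivariant pushforward, since for these cellular spaces the $T$-equivariant Borel--Moore homology is free over $H^\bullet_T(\pt)$ and $\SO(N-1)$-equivariant Borel--Moore homology embeds into it as the $W$-invariants, so injectivity descends. But $H^{\mathrm{BM},T}_\bullet(\mathcal Z)$ is free over $H^\bullet_T(\pt)$ with basis the classes of the closures of the cells, $H^{\mathrm{BM},T}_\bullet(\bm^{-1}(0))$ is free with basis the classes of the closures of the cells lying in $\bm^{-1}(0)$ (for a cell $C\subseteq\bm^{-1}(0)$ one has $\ol C{}^{\mathcal Z}=\ol C{}^{\bm^{-1}(0)}\subseteq\bm^{-1}(0)$, as $\bm^{-1}(0)$ is closed), and the pushforward carries the second basis bijectively onto a subset of the first; hence it is split injective, and the lemma follows.

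I expect the only genuinely delicate step to be the first reduction --- unwinding the abstract comparison map into the counit-induced map on the (equivariant) costalk and then into the Borel--Moore pushforward --- since the remaining injectivity is formal once one knows that $\bm^{-1}(0)$ is paved by $T$-stable cells of $\mathcal Z$; that cellularity is precisely what the proof of~\lemref{formality} provides, so, as promised in the introductory remark, no cellular decomposition of $\Gr_{\SO_N}$ itself is needed. A secondary point to handle carefully is the passage from $T$-equivariant to $\SO(N-1)$-equivariant homology, but this is standard given the freeness of the torus-equivariant groups involved.
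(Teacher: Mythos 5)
Your proof is correct and follows essentially the same route as the paper's: reduce to the iterated convolution of minuscule IC-sheaves, reinterpret the comparison map as the costalk-to-global-cohomology map $H^\bullet(\bm^{-1}(0))\to H^\bullet(\mathcal Z)$ for the smooth convolution diagram $\mathcal Z$, and conclude from the compatible cellular decompositions produced in the proof of Lemma~\ref{formality}. Your Borel--Moore, cell-closure-basis justification of the last step (and the descent from $T$-equivariant to $\SO(N-1)$-equivariant homology) is simply a more detailed rendering of the paper's one-line assertion that $H^\bullet(\bm^{-1}(0))$ is a direct summand of $H^\bullet(\mathcal Z)$.
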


\begin{proof}
  It suffices to prove that the natural morphism
  \begin{multline*}
  \Ext_{D^b_{\SO(N-1,\bO)}(\Gr_{\SO_N})}(E_0,\IC^{\lambda_\sfs}_{\lambda_\sfb})\\
  \to\Hom_{H^\bullet_{\SO(N-1,\bO)}(\pt)}\left(H^\bullet_{\SO(N-1,\bO)}(\Gr_{\SO_N},E_0),
  H^\bullet_{\SO(N-1,\bO)}(\Gr_{\SO_N},\IC^{\lambda_\sfs}_{\lambda_\sfb})\right)
  \end{multline*}
  (in the RHS we take Hom over the equivariant cohomology of the point) is injective.
  As in the proof of~Lemma~\ref{formality}, it suffices to check the injectivity for
  the iterated convolution $\IC_{\varpi_{N-1}}*\cdots*\IC_{\varpi_{N-1}}*\IC_{\varpi_N}*\cdots*\IC_{\varpi_N}$
  in place of $\IC^{\lambda_\sfs}_{\lambda_\sfb}$. Due to purity established in {\em loc.\ \!cit.}\ (= the
  proof of~Lemma~\ref{formality}),
  the LHS is a free $H^\bullet_{\SO(N-1,\bO)}(\pt)$-module with the space of generators isomorphic to
  the costalk of the above convolution at the base point $0\in\Gr_{\SO_N}$, that is to
  $H^\bullet(\bm^{-1}(0))$ (notations of {\em loc.\ cit.}).
  The RHS is also a free
  $H^\bullet_{\SO(N-1,\bO)}(\pt)$-module with the space of generators isomorphic to
  $H^\bullet\left(\Gr^{\varpi_{N-1}}_{\SO_{N-1}}\wtimes\cdots\wtimes\Gr^{\varpi_{N-1}}_{\SO_{N-1}}
  \wtimes\Gr^{\varpi_N}_{\SO_N}\wtimes\cdots\wtimes\Gr^{\varpi_N}_{\SO_N}\right)$.
  It contains $H^\bullet(\bm^{-1}(0))$ as a direct summand since the convolution diagram has a
  cellular decomposition compatible with the one for $\bm^{-1}(0)$, see {\em loc.\ cit.}
\end{proof}

\subsection{Calculation of the Ext algebra}
\label{calcul ext}
Recall that \[\BC[T\Sigma_\sfs]\cong H^\bullet_{\SO(N-1,\bO)}(\Gr_{\SO_{N-1}}^\odd)
\cong H^\bullet_{\SO(N-1,\bO)}(\Gr_{\SO_{N-1}}^\even),\] and
\[\BC[T\Sigma_\sfb]\cong H^\bullet_{\SO(N,\bO)}(\Gr_{\SO_N}^\odd)
\cong H^\bullet_{\SO(N,\bO)}(\Gr_{\SO_N}^\even).\] Moreover, for
$\lambda_\sfs\in\Lambda^+_\sfs$ (resp.\ $\lambda_\sfb\in\Lambda^+_\sfb$) we have canonical isomorphisms
of $\BC[T\Sigma_\sfs]$-modules (resp.\ $\BC[T\Sigma_\sfb]$-modules)
$\kappa_\sfs(V_{\lambda_\sfs})\cong H^\bullet_{\SO(N-1,\bO)}(\Gr_{\SO_{N-1}},\IC_{\lambda_\sfs})$
(resp.\ $\kappa_\sfb(V_{\lambda_\sfb})\cong H^\bullet_{\SO(N,\bO)}(\Gr_{\SO_N},\IC_{\lambda_\sfb})$)
(for Kostant functors $\kappa$ see~\S\ref{setup}).
This is also a theorem of V.~Ginzburg~\cite{g1} (for a published account see
e.g.~\cite[Theorem 6 and Lemma 9]{bf}). It follows that we have a canonical isomorphism
of $\BC[\Sigma_\sfs\times_{\Sigma_\sfb}T\Sigma_\sfb]$-modules
$d\varPi_{\sfs\sfb}^*\kappa_\sfb(V_{\lambda_\sfb})\cong H^\bullet_{\SO(N-1,\bO)}(\Gr_{\SO_N},\IC_{\lambda_\sfb})$.

\begin{lem}
  \label{tensor}
  For $\lambda_\sfs\in\Lambda^+_\sfs,\ \lambda_\sfb\in\Lambda^+_\sfb$ we have a canonical
  isomorphism of $\BC[\Sigma_\sfs\times_{\Sigma_\sfb}T\Sigma_\sfb]$-modules
  \[\kappa_\sfs(V_{\lambda_\sfs})\otimes_{\BC[\Sigma_\sfs]}
  d\varPi_{\sfs\sfb}^*\kappa_\sfb(V_{\lambda_\sfb})\big)\iso
  H^\bullet_{\SO(N-1,\bO)}(\Gr_{\SO_N},\IC_{\lambda_\sfs}*\IC_{\lambda_\sfb}).\]
\end{lem}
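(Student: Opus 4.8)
The plan is to compute $H^\bullet_{\SO(N-1,\bO)}(\Gr_{\SO_N},\IC_{\lambda_\sfs}*\IC_{\lambda_\sfb})$ geometrically by means of the convolution diagram $\bm$ of~\S\ref{orb}, and then to rewrite the answer through the two Ginzburg-type identifications recalled just before the statement; the argument is uniform in the odd and even cases and runs parallel to the corresponding step of~\cite{bfgt}. First I would write $\IC_{\lambda_\sfs}*\IC_{\lambda_\sfb}=\bm_!(\IC_{\lambda_\sfs}\wtimes\IC_{\lambda_\sfb})$, where $\bm\colon\ol\Gr{}^{\lambda_\sfs}_{\SO_{N-1}}\wtimes\ol\Gr{}^{\lambda_\sfb}_{\SO_N}\to\Gr_{\SO_N}$ is the convolution morphism; here $\SO(N-1,\bF)$ acts on $\Gr_{\SO_N}$ through the block embedding $\SO_{N-1}\hookrightarrow\SO_N$, and only the $\SO(N-1,\bO)$-equivariant structure of $\IC_{\lambda_\sfb}$ is used. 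Since $\bm$ is proper, $H^\bullet_{\SO(N-1,\bO)}(\Gr_{\SO_N},\IC_{\lambda_\sfs}*\IC_{\lambda_\sfb})$ is computed by $R\Gamma_{\SO(N-1,\bO)}$ of the twisted product with coefficients in $\IC_{\lambda_\sfs}\wtimes\IC_{\lambda_\sfb}$. The twisted product is the fibre bundle over $\ol\Gr{}^{\lambda_\sfs}_{\SO_{N-1}}$ with fibre $\ol\Gr{}^{\lambda_\sfb}_{\SO_N}$ associated to the tautological $\SO(N-1,\bO)$-torsor; its structure group $\SO(N-1,\bO)$ is pro-connected and therefore acts trivially on the fibrewise cohomology, while $H^\bullet(\Gr_{\SO_N},\IC_{\lambda_\sfb})$ is pure (concentrated in even degrees, by geometric Satake), so the Leray spectral sequence of this bundle degenerates. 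The standard projection formula for the cohomology of such twisted products then supplies a canonical isomorphism
\begin{multline*}
H^\bullet_{\SO(N-1,\bO)}(\Gr_{\SO_N},\IC_{\lambda_\sfs}*\IC_{\lambda_\sfb})\iso\\
R\Gamma_{\SO(N-1,\bO)}(\Gr_{\SO_{N-1}},\IC_{\lambda_\sfs})\otimes^{{\mathbf L}}_{H^\bullet_{\SO(N-1,\bO)}(\pt)}R\Gamma_{\SO(N-1,\bO)}(\Gr_{\SO_N},\IC_{\lambda_\sfb}),
\end{multline*}
valid with $\IC_{\lambda_\sfb}$ replaced by any $\SO(N-1,\bO)$-equivariant complex on $\Gr_{\SO_N}$.

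Next I would drop the derived tensor product. By the very purity already invoked in the proof of~\lemref{formality}, $H^\bullet_{\SO(N-1,\bO)}(\Gr_{\SO_{N-1}},\IC_{\lambda_\sfs})$ is a \emph{free} module over $H^\bullet_{\SO(N-1,\bO)}(\pt)=\BC[\Sigma_\sfs]$ (of rank $\dim V_{\lambda_\sfs}$); hence all the equivariant cohomology complexes occurring here are formal and the derived tensor product over $\BC[\Sigma_\sfs]$ coincides with the ordinary one. Substituting the isomorphisms $H^\bullet_{\SO(N-1,\bO)}(\Gr_{\SO_{N-1}},\IC_{\lambda_\sfs})\cong\kappa_\sfs(V_{\lambda_\sfs})$ and $H^\bullet_{\SO(N-1,\bO)}(\Gr_{\SO_N},\IC_{\lambda_\sfb})\cong d\varPi_{\sfs\sfb}^*\kappa_\sfb(V_{\lambda_\sfb})$ recalled above then yields
\[\kappa_\sfs(V_{\lambda_\sfs})\otimes_{\BC[\Sigma_\sfs]}d\varPi_{\sfs\sfb}^*\kappa_\sfb(V_{\lambda_\sfb})\iso H^\bullet_{\SO(N-1,\bO)}(\Gr_{\SO_N},\IC_{\lambda_\sfs}*\IC_{\lambda_\sfb}).\]

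It remains to verify that this is an isomorphism of $\BC[\Sigma_\sfs\times_{\Sigma_\sfb}T\Sigma_\sfb]=H^\bullet_{\SO(N-1,\bO)}(\Gr_{\SO_N})$-modules. The ring $\BC[\Sigma_\sfs\times_{\Sigma_\sfb}T\Sigma_\sfb]$ is generated by $H^\bullet_{\SO(N-1,\bO)}(\pt)=\BC[\Sigma_\sfs]$ together with the image of $H^\bullet_{\SO(N,\bO)}(\Gr_{\SO_N})=\BC[T\Sigma_\sfb]$ under the equivariance-restriction map: the former acts as the base ring $\BC[\Sigma_\sfs]$ of the tensor product on the left-hand side, and the latter commutes past the left convolution by $\IC_{\lambda_\sfs}$ (the residual $\SO(N-1,\bF)$-action being trivial on $H^\bullet_{\SO(N,\bO)}(\Gr_{\SO_N})$), hence under the projection formula it acts on the tensor factor $d\varPi_{\sfs\sfb}^*\kappa_\sfb(V_{\lambda_\sfb})$; so both match the evident module structure on the left-hand side. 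The only genuinely substantive point is the first step, i.e.\ setting up the twisted-product projection formula in this mixed $\SO(N-1,\bO)$-equivariant situation — in particular identifying $H^\bullet_{\SO(N-1,\bO)}(\pt)$ as the correct coefficient ring and justifying the Leray degeneration; once this is in place, the lemma reduces to the freeness statement and a routine bookkeeping of module structures.
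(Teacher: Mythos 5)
Your route is genuinely different from the paper's: you compute the convolution directly on the twisted product $\ol\Gr{}^{\lambda_\sfs}_{\SO_{N-1}}\wtimes\ol\Gr{}^{\lambda_\sfb}_{\SO_N}$ via a Leray--Hirsch/Eilenberg--Moore degeneration, whereas the paper deforms $\IC_{\lambda_\sfs}*\IC_{\lambda_\sfb}$ to $\IC_{\lambda_\sfs}\boxtimes\IC_{\lambda_\sfb}$ on the nearby fibre $\Gr_{\SO_{N-1}}\times\Gr_{\SO_N}$ of a Beilinson--Drinfeld Grassmannian and reads off both the K\"unneth decomposition and the module structure from the cospecialization maps. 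Your first step (freeness over $\BC[\Sigma_\sfs]=H^\bullet_{\SO(N-1,\bO)}(\pt)$, degeneration by purity, hence an isomorphism of graded $\BC[\Sigma_\sfs]$-modules) is plausible, although the ``standard projection formula'' you invoke is not formal: it is precisely the Leray--Hirsch statement to be proved, and by itself a degenerate spectral sequence identifies only the associated graded of a filtration, so you still owe the multiplicative comparison map.

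The genuine gap is in your last paragraph, where you pin down the $\BC[\Sigma_\sfs\times_{\Sigma_\sfb}T\Sigma_\sfb]$-module structure. You assert that the image of $H^\bullet_{\SO(N,\bO)}(\Gr_{\SO_N})=\BC[T\Sigma_\sfb]$ acts only on the factor $d\varPi_{\sfs\sfb}^*\kappa_\sfb(V_{\lambda_\sfb})$; this is false, and the justification conflates the right convolution action of the spherical Hecke category with the cup-product action of $H^\bullet_{\SO(N-1,\bO)}(\Gr_{\SO_N})$ on global equivariant cohomology (the latter is the structure used in \lemref{inj} and \lemref{phiso}). A class $c\in H^\bullet_{\SO(N-1,\bO)}(\Gr_{\SO_N})$ acts through $\bm^*c$ on the twisted product, and $\bm^*c$ decomposes as a class from the base plus a class from the fibre: for instance $\bm^*c_1(\CL_N)$ restricts nontrivially to the base $\ol\Gr{}^{\lambda_\sfs}_{\SO_{N-1}}$, since $\CL_N|_{\ol\Gr{}^{\lambda_\sfs}_{\SO_{N-1}}}$ is ample. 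Concretely, for $\lambda_\sfb=0$ your recipe would make the fibre directions of $T\Sigma_\sfb$ act through the zero section (the augmentation), whereas they actually act on $H^\bullet_{\SO(N-1,\bO)}(\ol\Gr{}^{\lambda_\sfs}_{\SO_{N-1}},\IC_{\lambda_\sfs})\cong\kappa_\sfs(V_{\lambda_\sfs})$ through $d\varPi_{\sfs\sfb}$, and $\kappa_\sfs(V_{\lambda_\sfs})$ is not supported on the zero section once $\lambda_\sfs\ne0$. The correct structure is the ``sum'' one, encoded in the paper by the fibrewise addition $\on{add}\colon T\Sigma_\sfb\times_{\Sigma_\sfb}T\Sigma_\sfb\to T\Sigma_\sfb$; establishing it is the real content of the lemma (it is what makes the comparison with $\Hom_{\BC[T\Sigma_\sfs]}(\BC[T\Sigma_\sfs],-)$ in \lemref{phiso} work), and filling it in amounts to proving the paper's cospecialization diagrams or their convolution-diagram analogues.
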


\begin{proof}
  By the classical argument going back to Drinfeld,
  $\IC_{\lambda_\sfs}*\IC_{\lambda_\sfb}\cong\IC_{\lambda_\sfs}\star\IC_{\lambda_\sfb}$, where the fusion $\star$
  is defined by taking nearby cycles in the Beilinson-Drinfeld Grassmannian
  $\Gr_{\on{BD}}\stackrel{\pi}{\longrightarrow}\BA^1$. The fiber $\pi^{-1}(0)$ is $\Gr_{\SO_N}$,
  and for $x\ne0$, the fiber $\pi^{-1}(x)$ is $\Gr_{\SO_{N-1}}\times\Gr_{\SO_N}$. We have a tautological
  closed embedding $\Gr_{\on{BD}}\hookrightarrow\Gr_{\SO_N,\on{BD}}$ into the usual Beilinson-Drinfeld
  Grassmannian of $\SO_N$. The cospecialization morphism to the cohomology of a nearby fiber
  \begin{multline*}H^\bullet_{\SO(N-1,\bO)}(\Gr_{\SO_N},\IC_{\lambda_\sfs}\star\IC_{\lambda_\sfb})=
  H^\bullet_{\SO_{N-1}}(\Gr_{\SO_N},\IC_{\lambda_\sfs}\star\IC_{\lambda_\sfb})\\
  \to H^\bullet_{\SO_{N-1}}(\Gr_{\SO_{N-1}}\times\Gr_{\SO_N},\IC_{\lambda_\sfs}\boxtimes\IC_{\lambda_\sfb})
  \end{multline*}
  is an isomorphism (due to properness), and
  is compatible with the cospecialization morphism of the cohomology of ambient spaces
  $H^\bullet_{\SO_{N-1}}(\Gr_{\SO_N})\to H^\bullet_{\SO_{N-1}}(\Gr_{\SO_{N-1}}\times\Gr_{\SO_N})$, and the diagram
  formed by the cospecialization morphisms and restriction with respect to the above closed
  embedding of Beilinson-Drinfeld Grassmannians commutes:
  \begin{equation*}
    \begin{CD}
      H^\bullet_{\SO_{N-1}}(\Gr_{\SO_N}) @>>>  H^\bullet_{\SO_{N-1}}(\Gr_{\SO_{N-1}}\times\Gr_{\SO_N})\\
      @AAA @AAA\\
      H^\bullet_{\SO_N}(\Gr_{\SO_N}) @>>> H^\bullet_{\SO_N}(\Gr_{\SO_N}\times\Gr_{\SO_N}).
    \end{CD}
  \end{equation*}
  Finally, the following diagram commutes as well:
  \begin{equation*}
    \begin{CD}
      \BC[T\Sigma_\sfb] @>{\on{add}^*}>> \BC[T\Sigma_\sfb\times_{\Sigma_\sfb}T\Sigma_\sfb]\\
      @VV{\wr}V @VV{\wr}V\\
      H^\bullet_{\SO_N}(\Gr_{\SO_N}) @>>> H^\bullet_{\SO_N}(\Gr_{\SO_N}\times\Gr_{\SO_N}),
    \end{CD}
  \end{equation*}
  where $\on{add}\colon T\Sigma_\sfb\times_{\Sigma_\sfb}T\Sigma_\sfb\to T\Sigma_\sfb$ stands
  for the fiberwise addition morphism.
  The lemma follows.
\end{proof}

Now recall the minuscule closed orbits
$Q^{N-3}\cong\Gr^{\varpi_{N-1}}_{\SO_{N-1}}\subset\Gr^{\varpi_N}_{\SO_N}\cong Q^{N-2}$ (smooth quadrics).
For $N>3$ we have
\begin{multline*}
  \Ext_{D^b_{\SO(N-1,\bO)}(\Gr_{\SO_N})}^\bullet(E_0,\IC_{\varpi_{N-1}}*E_0*\IC_{\varpi_N})\\
=\Ext_{D^b_{\SO(N-1,\bO)}(\Gr_{\SO_N})}^\bullet(\IC_{\varpi_{N-1}}*E_0,E_0*\IC_{\varpi_N})\\
=\Ext_{D^b_{\SO(N-1,\bO)}(\Gr_{\SO_N})}^\bullet(\IC(Q^{N-3}),\IC(Q^{N-2})).
\end{multline*}
(In case $N=3$ we replace $\IC_{\varpi_{N-1}}$ with $\IC(Q^0)$ the same way as in the last
paragraph of the proof of Lemma~\ref{formality}.)
Since $Q^{N-3}\subset Q^{N-2}$ is a smooth divisor, we have a canonical element
\[h\in\Ext_{D^b_{\SO(N-1,\bO)}(\Gr_{\SO_N})}^1(\IC(Q^{N-3}),\IC(Q^{N-2})).\] Hence we obtain the subspace
\[h\otimes V_0^*\otimes V_1^*\cong h\otimes V_0\otimes V_1\subset\fE^1:=
\Ext_{D^\deeq_{\SO(N-1,\bO)}(\Gr_{\SO_N})}^1(E_0,E_0),\] cf.~(\ref{def deeq a}).
We will denote this subspace simply by $V_0\otimes V_1$. Thus we obtain a homomorphism from
the free tensor algebra \[\phi^\bullet\colon T(\Pi(V_0\otimes V_1)[-1])\to\fE^\bullet:=
\Ext_{D^\deeq_{\SO(N-1,\bO)}(\Gr_{\SO_N})}^\bullet(E_0,E_0).\]

\begin{lem}
  \label{phiso}
  The homomorphism $\phi^\bullet$ factors through the projection
  \[T(\Pi(V_0\otimes V_1)[-1])\twoheadrightarrow\Sym(\Pi(V_0\otimes V_1)[-1])=\fG^\bullet,\]
  and induces an isomorphism $\fG^\bullet\iso\fE^\bullet$.
\end{lem}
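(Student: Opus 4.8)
The plan is to prove Lemma~\ref{phiso} in two stages: first show that $\phi^\bullet$ kills the commutator ideal (so that it factors through $\fG^\bullet$), and then show the resulting map $\fG^\bullet\to\fE^\bullet$ is an isomorphism by comparing graded pieces degree by degree, using the computation of equivariant cohomology from \S\ref{equi coh} together with the injectivity of Lemma~\ref{inj} and Lemma~\ref{tensor}.

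First I would establish the factorization through the symmetric algebra. The key point is that for odd cohomological degree $1$ classes, the supercommutator is symmetric; since $\Pi(V_0\otimes V_1)$ sits in cohomological degree $1$ (hence is ``odd''), showing that $V_0\otimes V_1\subset\fE^1$ consists of mutually (super)commuting elements automatically forces $\phi^\bullet$ to factor through $\Sym(\Pi(V_0\otimes V_1)[-1])=\fG^\bullet$. Commutativity here should follow from the fact that the two ``free'' actions — the left convolution action of $\Perv_{\SO(N-1,\bO)}(\Gr_{\SO_{N-1}})$ and the right convolution action of $\Perv_{\SO(N,\bO)}(\Gr_{\SO_N})$ — commute with each other, together with the observation that the class $h\in\Ext^1(\IC(Q^{N-3}),\IC(Q^{N-2}))$ generates $V_0\otimes V_1\subset\fE^1$ by applying these two commuting convolution actions; products of the generators computed via (\ref{def deeq a}) and Lemma~\ref{tensor} are then symmetric because the fusion/convolution of minuscule sheaves is commutative (Drinfeld's argument, as recalled in the proof of Lemma~\ref{tensor}).

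Next, for the isomorphism, I would compare $\fG^\bullet$ and $\fE^\bullet$ as graded $\sG_{\bar0}$-modules. By Lemma~\ref{formality}, $\fE^\bullet=\Ext^\bullet_{D^\deeq}(E_0,E_0)$ is already the cohomology of a formal dg-algebra, and by (\ref{def deeq a}) its degree-$\ell$ part decomposes according to pairs $(\lambda_\sfs,\lambda_\sfb)$ via $\Ext^\ell_{D^b_{\SO(N-1,\bO)}(\Gr_{\SO_N})}(E_0,\IC_{\lambda_\sfs}*\IC_{\lambda_\sfb})\otimes V_{\lambda_\sfs}^*\otimes V_{\lambda_\sfb}^*$. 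Using Lemma~\ref{inj}, each such Ext-space embeds into $\Hom$ over the equivariant cohomology ring between $H^\bullet_{\SO(N-1,\bO)}(\Gr_{\SO_N},E_0)=\BC[\Sigma_\sfs\times_{\Sigma_\sfb}T\Sigma_\sfb]$ and $H^\bullet_{\SO(N-1,\bO)}(\Gr_{\SO_N},\IC_{\lambda_\sfs}*\IC_{\lambda_\sfb})$, the latter being identified in Lemma~\ref{tensor} with $\kappa_\sfs(V_{\lambda_\sfs})\otimes_{\BC[\Sigma_\sfs]}d\varPi_{\sfs\sfb}^*\kappa_\sfb(V_{\lambda_\sfb})$. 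On the other side, the graded pieces of $\fG^\bullet=\Sym(\Pi(V_0\otimes V_1)[-1])$ can be decomposed into irreducibles of $\SO(V_0)\times\Sp(V_1)$, and one matches the multiplicity spaces. The inclusion $\fG^\bullet\hookrightarrow\fE^\bullet$ is clear once the factorization is known and $\phi^\bullet$ is checked to be injective on $\fG^1=V_0\otimes V_1$ (the element $h$ is nonzero, being the class of a smooth divisor); surjectivity then follows by showing the dimensions of the graded pieces agree, which one reduces — via Lemma~\ref{inj} as an upper bound on $\fE^\bullet$ and the symmetric-algebra structure as a lower bound — to an equality of Hilbert series that can be read off from the Kostant-functor description and the structure of the universal centralizer.

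The main obstacle I expect is the surjectivity/dimension count: one must show that $\fE^\bullet$ is \emph{not bigger} than $\Sym(\Pi(V_0\otimes V_1)[-1])$, i.e.\ that there are no ``extra'' Ext classes beyond those generated by $h$. The tool for this is Lemma~\ref{inj}, which bounds $\fE^\bullet$ inside a Hom-space over equivariant cohomology; one then has to carefully identify that Hom-space using the explicit Kostant-functor descriptions $\kappa_\sfs, \kappa_\sfb$ and the fibered-product structure $\Sigma_\sfs\times_{\Sigma_\sfb}T\Sigma_\sfb$, and check that the image of $\Sym(\Pi(V_0\otimes V_1)[-1])$ already exhausts it. This is essentially a commutative-algebra computation about $\BC[T\Sigma]$-module homomorphisms, parallel to the one carried out in \cite{bfgt}, but the appearance of the two-fold cover $\varPi_{01}$ (odd case) versus the closed embedding $\varPi_{10}$ (even case) means the argument must be run in the uniform ``blinking'' notation and the fibered product $\Sigma_\sfs\times_{\Sigma_\sfb}T\Sigma_\sfb$ handled with care. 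I would expect to invoke the analogous statement from \cite[\S3.10 or similar]{bfgt} and adapt it, rather than redo it from scratch.
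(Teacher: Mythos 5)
Your overall architecture (sandwich $\fE^\bullet$ between the subalgebra generated by $h\otimes V_0\otimes V_1$ and the Hom-space of Lemma~\ref{inj}, rewritten via Lemma~\ref{tensor}) is the right one, but two steps as written do not hold up, and the ingredient that actually closes the argument is missing. First, the claim that ``the inclusion $\fG^\bullet\hookrightarrow\fE^\bullet$ is clear once $\phi^\bullet$ is checked to be injective on $\fG^1$'' is false: an algebra map out of $\Sym(\Pi(V_0\otimes V_1)[-1])$ can be injective on the generators without being injective (its kernel need not be generated in degree $1$). Second, your direct commutativity argument for the degree-one classes (via commutativity of the two Hecke actions and of fusion of minuscule objects) is only a gesture at a Yoneda-product computation that the paper is deliberately structured to avoid; as stated it does not establish that two arbitrary elements of $h\otimes V_0\otimes V_1$ supercommute.

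The missing ingredient is Proposition~\ref{kostant} (resting on the invariant theory of \S\ref{IT}, Lemmas~\ref{weyl} and~\ref{regul}): the target of the injection of Lemma~\ref{inj}, identified by Lemma~\ref{tensor} with $\bigoplus\Hom_{\BC[T\Sigma_\sfs]}\big(\BC[T\Sigma_\sfs],\kappa_\sfs(V_{\lambda_\sfs})\otimes_{\BC[\Sigma_\sfs]}d\varPi_{\sfs\sfb}^*\kappa_\sfb(V_{\lambda_\sfb})\big)\otimes V_{\lambda_\sfs}^*\otimes V_{\lambda_\sfb}^*$, is canonically the deequivariantized self-Ext algebra of $\fG^\bullet$, i.e.\ $\fG^\bullet$ itself. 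This is not a Hilbert-series count and not a quotable statement from \cite{bfgt}; it is a restriction-to-the-regular-locus/Weierstra\ss-section computation specific to $V_0\otimes V_1$ with its $\SO(V_0)\times\Sp(V_1)$-action. Once you have it, the efficient order of logic is the paper's: Lemma~\ref{inj} gives an injection $\fE^\bullet\hookrightarrow\fG^\bullet$ of algebras, whence $\fE^\bullet$ is commutative \emph{for free} (so $\phi^\bullet$ factors, with no fusion argument needed), and the composite $\fG^\bullet\xrightarrow{\bar\phi{}^\bullet}\fE^\bullet\hookrightarrow\fG^\bullet$ is an endomorphism of a free graded-commutative algebra equal to the identity on generators, hence the identity; this yields injectivity of $\bar\phi{}^\bullet$ and surjectivity of the embedding simultaneously, repairing both of your gaps at once.
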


\begin{proof}
  We have a tautological isomorphism
  \begin{multline*}\fG^\bullet\cong\Ext^\bullet_{D^{\sG_{\bar0},\deeq}_\perf(\fG^\bullet)}(\fG^\bullet,\fG^\bullet)\\
  =\bigoplus_{\lambda_\sfs\in\Lambda_\sfs^+,\ \lambda_\sfb\in\Lambda_\sfb^+}
  \Ext^\bullet_{D^{\sG_{\bar0}}_\perf(\fG^\bullet)}(\fG^\bullet,V_{\lambda_\sfs}\otimes\fG^\bullet\otimes
  V_{\lambda_\sfb})\otimes V_{\lambda_\sfs}^*\otimes V_{\lambda_\sfb}^*.
  \end{multline*}
  By Proposition~\ref{kostant} below, the Kostant functors induce an isomorphism
  \begin{multline*}\fG^\bullet\cong\bigoplus_{\lambda_\sfs\in\Lambda_\sfs^+,\ \lambda_\sfb\in\Lambda_\sfb^+}
    \Ext^\bullet_{D^{\sG_{\bar0}}_\perf(\fG^\bullet)}(\fG^\bullet,V_{\lambda_\sfs}\otimes\fG^\bullet\otimes
    V_{\lambda_\sfb})\otimes V_{\lambda_\sfs}^*\otimes V_{\lambda_\sfb}^*\\
    \iso\bigoplus_{\lambda_\sfs\in\Lambda_\sfs^+,\ \lambda_\sfb\in\Lambda_\sfb^+}
    \Hom_{\BC[T\Sigma_\sfs]}\big(\BC[\Sigma_\sfs],
  \kappa_\sfs(V_{\lambda_\sfs})\otimes_{\BC[\Sigma_\sfs]}d\varPi_{\sfs\sfb}^*\kappa_\sfb(V_{\lambda_\sfb})\big)
  \otimes V_{\lambda_\sfs}^*\otimes V_{\lambda_\sfb}^*.
  \end{multline*}
  Here we view $\Sigma_\sfs$ as the zero section of the tangent bundle $T\Sigma_\sfs$, so that
  $\BC[\Sigma_\sfs]$ acquires a structure of $\BC[T\Sigma_\sfs]$-module.
  Comparing with~Lemma~\ref{tensor}, by~Lemma~\ref{inj} we obtain an injective homomorphism from
  the topological Ext-algebra
  to the algebraic one: $\fE^\bullet\hookrightarrow\fG^\bullet$. Since $\fG^\bullet$ is commutative,
  we conclude that $\fE^\bullet$ is commutative as well, i.e.\ $\phi^\bullet$ does factor through
  $\bar\phi{}^\bullet\colon\Sym(\Pi(V_0\otimes V_1)[-1])=\fG^\bullet\to\fE^\bullet$.
  Finally, since the composition
  $\fG^\bullet\stackrel{\bar\phi{}^\bullet}{\longrightarrow}\fE^\bullet\hookrightarrow\fG^\bullet$
  is identity on the generators $\Pi(V_0\otimes V_1)$ of $\fG^\bullet$, we conclude that
  $\bar\phi{}^\bullet$ is an isomorphism.
\end{proof}

Now the existence of the desired equivalence $\Phi$ of~Theorem~\ref{main}(a) follows
from~Lemma~\ref{purity} and~Lemma~\ref{phiso}. Furthermore, the claims of~Theorem~\ref{main}(b,e)
are proved exactly as~\cite[Corollary 3.8.1(a,c)]{bfgt}.

\begin{cor}
  \label{small}
  We have $\IC_{\lambda_\sfs}*\IC_{\lambda_\sfb}
  =\IC^{\lambda_\sfs}_0\star\IC^0_{\lambda_\sfb}=\IC^{\lambda_\sfs}_{\lambda_\sfb}$.
\end{cor}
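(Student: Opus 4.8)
The plan is to transport the assertion across the equivalences $\Phi$ and $\varkappa$ and reduce it to the irreducibility of a single $\ul\sG$-module. The first equality $\IC_{\lambda_\sfs}*\IC_{\lambda_\sfb}=\IC^{\lambda_\sfs}_0\star\IC^0_{\lambda_\sfb}$ is the fusion interpretation of convolution already recorded in~\S\ref{deeq}, where it was also noted that $\IC_{\lambda_\sfs}*\IC_{\lambda_\sfb}$ is the direct sum of $\IC^{\lambda_\sfs}_{\lambda_\sfb}$ with IC-sheaves supported in the boundary of $\BO^{\lambda_\sfs}_{\lambda_\sfb}$. So it remains to show that this convolution is a \emph{simple} perverse sheaf.

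First I would identify the object of $D^{\sG_{\bar0}}_\perf(\fG^\bullet)$ attached by $\Phi^{-1}$ to $\IC_{\lambda_\sfs}*\IC_{\lambda_\sfb}=\IC_{\lambda_\sfs}*E_0*\IC_{\lambda_\sfb}$. Recall $\Phi(\fG^\bullet)=E_0$, as $\Phi$ is the composite of Lemma~\ref{phiso} and Lemma~\ref{purity}. By Theorem~\ref{main}(a), $\Phi$ intertwines left convolution by $\IC_{\lambda_\sfs}\in D^b_{\SO(N-1,\bO)}(\Gr_{\SO_{N-1}})$ with the action of $\boldsymbol\beta^{-1}(\IC_{\lambda_\sfs})$ via the induction functor $D^{G_\sfs}_\perf(\Sym(\fg_\sfs[-2]))\to D^{\sG_{\bar0}}_\perf(\fG^\bullet)$ (one of $\bq_0^*,\bq_1^*$ according to parity), and right convolution by $\IC_{\lambda_\sfb}\in D^b_{\SO(N,\bO)}(\Gr_{\SO_N})$ with the action of $\boldsymbol\beta^{-1}(\IC_{\lambda_\sfb})$ via the induction functor from $D^{G_\sfb}_\perf(\Sym(\fg_\sfb[-2]))$. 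Since $\boldsymbol\beta$ matches $\IC_{\lambda_\sfs}$ (resp.\ $\IC_{\lambda_\sfb}$) with the free $\Sym(\fg_\sfs[-2])$-module (resp.\ $\Sym(\fg_\sfb[-2])$-module) on $V_{\lambda_\sfs}$ (resp.\ $V_{\lambda_\sfb}$), and induction preserves freeness, these become the free $\fG^\bullet$-modules $V_{\lambda_\sfs}\otimes\fG^\bullet$ and $V_{\lambda_\sfb}\otimes\fG^\bullet$, with $\sG_{\bar0}=\SO(V_0)\times\Sp(V_1)$ acting on the fibers through the factor corresponding to $\sfs$ (resp.\ $\sfb$). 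Applying both operations to $\fG^\bullet=\Phi^{-1}(E_0)$ yields
\[\Phi^{-1}\big(\IC_{\lambda_\sfs}*E_0*\IC_{\lambda_\sfb}\big)=(V_{\lambda_\sfs}\otimes\fG^\bullet)\otimes_{\fG^\bullet}(V_{\lambda_\sfb}\otimes\fG^\bullet)=(V_{\lambda_\sfs}\boxtimes V_{\lambda_\sfb})\otimes\fG^\bullet,\]
the free $\fG^\bullet$-module whose fiber is the irreducible $\sG_{\bar0}$-module $V_{\lambda_\sfs}\boxtimes V_{\lambda_\sfb}$.

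Next I would observe that under the Koszul equivalence $\varkappa$ this free module corresponds to the $\ul\sG$-module $V_{\lambda_\sfs}\boxtimes V_{\lambda_\sfb}$ with the odd part $\bg_{\bar1}$ acting by zero: since $\varkappa(\CM)=K^\bullet\otimes_\Lambda\CM$, for $\CM=V_{\lambda_\sfs}\boxtimes V_{\lambda_\sfb}$ with $\Lambda_{>0}$ acting trivially one has $K^\bullet\otimes_\Lambda\CM=(K^\bullet\otimes_\Lambda\BC)\otimes\CM=\fG^\bullet\otimes\CM$. This $\ul\sG$-module is irreducible: in any finite-dimensional $\sG_{\bar0}\ltimes\Lambda$-module the annihilator of the nilpotent ideal $\Lambda_{>0}$ is a nonzero $\sG_{\bar0}$-stable submodule, so every simple object of the heart of $D^{\sG_{\bar0}}_\fid(\Lambda)$ is an irreducible $\sG_{\bar0}$-module with $\bg_{\bar1}$ acting by zero, and $V_{\lambda_\sfs}\boxtimes V_{\lambda_\sfb}$ is of this form. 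Since $\Phi\circ\varkappa$ is a $t$-exact equivalence by Theorem~\ref{main}(b), it carries this simple object of the heart to a simple object of $\Perv_{\SO(N-1,\bO)}(\Gr_{\SO_N})$, which by Lemma~\ref{connected} is of the form $\IC^{\mu_\sfs}_{\mu_\sfb}$. Hence $\IC_{\lambda_\sfs}*\IC_{\lambda_\sfb}$ is simple; as it contains $\IC^{\lambda_\sfs}_{\lambda_\sfb}$ as a direct summand (\S\ref{deeq}), it must equal $\IC^{\lambda_\sfs}_{\lambda_\sfb}$, and together with the first equality this gives the corollary.

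The step I expect to need the most care is the identification in the second paragraph --- matching left and right geometric convolution with $\otimes_{\fG^\bullet}$ against $\bq_0^*,\bq_1^*$ of the free modules $\boldsymbol\beta^{-1}(\IC_{\lambda_\sfs})$, $\boldsymbol\beta^{-1}(\IC_{\lambda_\sfb})$. This rests on the precise form of the Bezrukavnikov--Finkelberg equivalence $\boldsymbol\beta$ of~\cite[Theorem~5]{bf} (that $\IC_\lambda$ goes to the free module on $V_\lambda$) and on the compatibility in Theorem~\ref{main}(a) of $\Phi$ with the module structures built from the quadratic moment maps $\bq_0,\bq_1$. Everything after that identification is formal.
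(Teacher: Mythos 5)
Your proposal is correct, and it shares the paper's key step: identifying $\Phi^{-1}(\IC_{\lambda_\sfs}*E_0*\IC_{\lambda_\sfb})$ with the free module $V_{\lambda_\sfs}\otimes\fG^\bullet\otimes V_{\lambda_\sfb}$. Where you diverge is in how you conclude. The paper simply observes that $V_{\lambda_\sfs}\otimes\fG^\bullet\otimes V_{\lambda_\sfb}$ is \emph{indecomposable} (its degree-zero equivariant endomorphisms reduce to $\End_{\sG_{\bar0}}(V_{\lambda_\sfs}\boxtimes V_{\lambda_\sfb})=\BC$), so the convolution, already known from \S\ref{deeq} to be a direct sum of $\IC^{\lambda_\sfs}_{\lambda_\sfb}$ and boundary terms, must be exactly $\IC^{\lambda_\sfs}_{\lambda_\sfb}$; this uses only Theorem~\ref{main}(a). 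You instead pass through the Koszul equivalence $\varkappa$ to see the free module as the simple $\ul\sG$-module $V_{\lambda_\sfs}\boxtimes V_{\lambda_\sfb}$, and invoke the $t$-exactness of Theorem~\ref{main}(b) together with Lemma~\ref{connected} to conclude the convolution is a \emph{simple} perverse sheaf --- a stronger intermediate statement bought at the cost of extra machinery. Both routes are sound and non-circular (part (b) is established before the corollary and does not depend on it). One small economy you could make: the identification in your second paragraph does not require the full derived Hecke compatibility of Theorem~\ref{main}(a), which is only completed in \S\ref{compa}; it already holds ``by construction,'' since formula (\ref{def deeq a}) and Lemmas~\ref{purity}, \ref{phiso} match the object $\IC_{\lambda_\sfs}*E_0*\IC_{\lambda_\sfb}$ of the deequivariantized category directly with $V_{\lambda_\sfs}\otimes\fE^\bullet\otimes V_{\lambda_\sfb}\cong V_{\lambda_\sfs}\otimes\fG^\bullet\otimes V_{\lambda_\sfb}$.
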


\begin{proof}
  By construction, $\Phi(V_{\lambda_\sfs}\otimes\fG^\bullet\otimes V_{\lambda_\sfb})
  =\IC_{\lambda_\sfs}*\IC_{\lambda_\sfb}$. But $V_{\lambda_\sfs}\otimes\fG^\bullet\otimes V_{\lambda_\sfb}$ is
  an indecomposable object of $D^{\sG_{\bar0}}_\perf(\fG^\bullet)$, hence
  $\IC_{\lambda_\sfs}*\IC_{\lambda_\sfb}=\IC^{\lambda_\sfs}_0\star\IC^0_{\lambda_\sfb}$ must be indecomposable
  as well, i.e.\ it must coincide with $\IC^{\lambda_\sfs}_{\lambda_\sfb}$.
\end{proof}

\subsection{Compatibility with the spherical Hecke actions}
\label{compa}
To finish the proof of~Theorem~\ref{main}(a) it remains to check the compatibility with the
left and right convolution actions of the monoidal spherical Hecke categories. We check the
compatibility for the left action; the verification for the right action is similar.
Our argument is similar to the one in the proof of~\cite[Lemma 3.11.1]{bfgt}.
Namely, we already know the compatibility with the convolution action of the semisimple abelian
category $\Perv_{\SO(N-1,\bO)}(\Gr_{\SO_{N-1}})$. Hence we obtain a homomorphism from the deequivariantized
Ext-algebra of the unit object of $D^b_{\SO(N-1,\bO)}(\Gr_{\SO_{N-1}})$ to the deequivariantized
Ext-algebra of the unit object of $D^b_{\SO(N-1,\bO)}(\Gr_{\SO_N})$. The corresponding RHom-algebras
are formal, and by~\cite[Proposition 7, Remarks 2,3]{bf}
it suffices to check that the above homomorphism of graded commutative algebras
coincides with $\bq_\sfs^*$. We proceed to check the desired equality on generators.

Recall the element
$h\in\Ext_{D^b_{\SO(N-1,\bO)}(\Gr_{\SO_N})}^1(\IC(Q^{N-3}),\IC(Q^{N-2}))$ of~\S\ref{calcul ext}.
Dually, we have a canonical element $h^*\in\Ext_{D^b_{\SO(N-1,\bO)}(\Gr_{\SO_N})}^1(\IC(Q^{N-2}),\IC(Q^{N-3}))$.
The composition $h^*\circ h\in\Ext_{D^b_{\SO(N-1,\bO)}(\Gr_{\SO_N})}^2(\IC(Q^{N-3}),\IC(Q^{N-3}))$ is the
multiplication by the first Chern class of the normal line bundle $\CN_{Q^{N-3}/Q^{N-2}}$.
This normal bundle is isomorphic to the line bundle $\CO(1)$ restricted from $\BP^{N-1}$ under the
tautological embedding $Q^{N-3}\subset Q^{N-2}\subset\BP^{N-1}$.

If $N\ne4$, the Picard group of the connected component $\Gr^\odd_{\SO_N}$ containing $Q^{N-3}$
is isomorphic to $\BZ$. Its ample generator is denoted $\CL_N$, the determinant line bundle.
The restriction $\CL_N|_{Q^{N-3}}$ is also isomorphic to $\CO(1)\simeq\CN_{Q^{N-3}/Q^{N-2}}$. We conclude
that $h^*\circ h=c_1(\CL_N)$ (when $N\ne4$). On the other hand, in the equivariant derived Satake
category $D^b_{\SO(N-1,\bO)}(\Gr_{\SO_{N-1}})\cong D^{G_\sfs}_\perf\big(\Sym(\fg_\sfs[-2])\big)$ the
first Chern class
\[c_1(\CL_{N-1})\in\Ext^2_{D^b_{\SO(N-1,\bO)}(\Gr_{\SO_{N-1}})}(\IC_{\varpi_{N-1}},\IC_{\varpi_{N-1}})\subset
\fg_\sfs\otimes\End(V_\sfs)\] corresponds to the canonical `action' element
$\fg_\sfs^*\cong\fg_\sfs\hookrightarrow\End(V_\sfs)$. This completes the verification of the desired
compatibility with the left action in case $N\ne4$. The case $N=4$ is left as an exercise to the
interested reader.

Theorem~\ref{main}(a) is proved.

\subsection{Some Invariant Theory}
\label{IT}
Recall the blinking notation of~\S\ref{setup}.

\begin{lem}
  \label{weyl}
  \textup{(a)} 
  The morphism $\bq_\sfs$ induces an
  isomorphism of categorical quotients
  \[(V_\sfs\otimes V_\sfb)\dslash(G_\sfs\times G_\sfb)\iso\fg_\sfs^*\dslash G_\sfs\cong\Sigma_\sfs.\]


  \textup{(b)} The following diagram
  commutes:
  \begin{equation*}
    \begin{CD}
      (V_\sfs\otimes V_\sfb)\dslash(G_\sfs\times G_\sfb) @>{\bq_\sfb}>> \fg_\sfb^*\dslash G_\sfb\\
      @V{\bq_\sfs}V{\wr}V @|\\
      \Sigma_\sfs @>{\varPi_{\sfs\sfb}}>> \Sigma_\sfb.
    \end{CD}
  \end{equation*}

  Thus the image of the complete moment map
  \[(\bq_\sfs,\bq_\sfb)\colon (V_\sfs\otimes V_\sfb)\dslash(G_\sfs\times G_\sfb)\to
  \fg_\sfs^*\dslash G_\sfs\times\fg_\sfb^*\dslash G_\sfb\cong\Sigma_\sfs\times\Sigma_\sfb\]
  identifies $(V_\sfs\otimes V_\sfb)\dslash(G_\sfs\times G_\sfb)$ with the graph of
  $\varPi_{\sfs\sfb}$. 
\end{lem}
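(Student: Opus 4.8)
The plan is to reduce both statements to identities of rings of regular functions and then apply the first and second fundamental theorems of classical invariant theory together with an elementary surjectivity statement. All the categorical quotients in sight are $\Spec$ of the invariant subring of a polynomial ring, hence reduced affine schemes, so it suffices to work on coordinate rings. Recall that $\bq_\sfs$ sends $A$ to the composition acting on the \emph{smaller} space $V_\sfs$ (namely $A^tA\in\fso(V_0)$ in the odd case and $AA^t\in\fsp(V_1)$ in the even case), and that, under the standard identification of $\fg_\sfs\cong\fg_\sfs^*$ with the space of bilinear forms on $V_\sfs$ of the relevant symmetry type (skew in the odd case, symmetric in the even case), $\bq_\sfs(A)$ becomes the pullback to $V_\sfs$ of the invariant form of $V_\sfb$ along the appropriate linear map between $V_\sfs$ and $V_\sfb$.

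For part (a), I would first observe that $\bq_\sfs$ is surjective onto $\fg_\sfs^*$: since $\dim V_\sfb\geq\dim V_\sfs$, every skew (odd case), resp.\ symmetric (even case), bilinear form on $V_\sfs$---and these run over all of $\fg_\sfs^*$---has rank $\leq\dim V_\sfs\leq\dim V_\sfb$ and hence arises as such a pullback. Therefore $\bq_\sfs^*\colon\BC[\fg_\sfs^*]\to\BC[V_\sfs\otimes V_\sfb]$ is injective, and its image is the subalgebra generated by the matrix entries of $\bq_\sfs$. Now regard $V_\sfs\otimes V_\sfb$ as $\dim V_\sfs$ copies of $V_\sfb$ with its $G_\sfb$-action; in suitable coordinates the matrix entries of $\bq_\sfs$ are exactly the pairwise invariant pairings, i.e.\ the generators of $\BC[V_\sfs\otimes V_\sfb]^{G_\sfb}$ furnished by the first fundamental theorem. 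Since $\dim V_\sfs\leq\dim V_\sfb$, with strict inequality exactly when $G_\sfb$ is orthogonal (the even case, $2n-2<2n$), the second fundamental theorem gives that there are no relations among these generators, and moreover in the orthogonal case the $\SO(V_\sfb)$-invariants equal the $\on{O}(V_\sfb)$-invariants (the extra ``bracket'' invariants need $\dim V_\sfb$ distinct vectors). Hence $\bq_\sfs^*$ is an isomorphism $\BC[\fg_\sfs^*]\iso\BC[V_\sfs\otimes V_\sfb]^{G_\sfb}$, which is $G_\sfs$-equivariant because $\bq_\sfs$ is; taking $G_\sfs$-invariants yields $\BC[\Sigma_\sfs]=\BC[\fg_\sfs^*]^{G_\sfs}\iso\BC[V_\sfs\otimes V_\sfb]^{G_\sfs\times G_\sfb}$, which is (a).

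For part (b), $\bq_\sfb$ descends to $\bar\bq_\sfb\colon(V_\sfs\otimes V_\sfb)\dslash(G_\sfs\times G_\sfb)\to\Sigma_\sfb$; identifying the source with $\Sigma_\sfs$ via (a), I must check that the ring maps $\BC[\Sigma_\sfb]\rightrightarrows\BC[V_\sfs\otimes V_\sfb]^{G_\sfs\times G_\sfb}$ induced by $\bar\bq_\sfb$ and by $\varPi_{\sfs\sfb}$ agree on a set of generators of $\BC[\Sigma_\sfb]\cong\BC[\fg_\sfb^*]^{G_\sfb}$. Such a generating set is given by the power traces $X\mapsto\Tr(X^{2k})$, together with, in the even case (where $G_\sfb=\SO_{2n}$), the Pfaffian of degree $n$. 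Under $\varPi_{\sfs\sfb}$ the power traces pull back to the power traces on $\Sigma_\sfs$ (the map $\ft_\sfs^*\to\ft_\sfb^*$ defining $\varPi_{\sfs\sfb}$ matches $\eps_i\leftrightarrow\delta_i$), while in the even case the Pfaffian $\eps_1\cdots\eps_n$ restricts to $0$ on the hyperplane $\ft_\sfs^*=\ft_1^*\subset\ft_0^*$ and hence pulls back to $0$. On the other side, for $A\in V_\sfs\otimes V_\sfb$ the elements $\bq_\sfb(A)$ and $\bq_\sfs(A)$ are $A^tA$ and $AA^t$ in one order or the other, so $\Tr(\bq_\sfb(A)^k)=\Tr(\bq_\sfs(A)^k)$ for all $k\geq1$ by cyclicity of the trace; and in the even case $\on{Pf}(\bq_\sfb(A))=\on{Pf}(A^tA)=0$, because $\on{rank}(A^tA)\leq\on{rank}(A)\leq\dim V_1=2n-2<2n=\dim V_0$ makes $A^tA$ singular. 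Hence the two ring maps agree on generators, giving (b). Finally, by (a) and (b) the complete moment map on categorical quotients is $\bar\bq_\sfs$ followed by $(\on{id}_{\Sigma_\sfs},\varPi_{\sfs\sfb})\colon\Sigma_\sfs\hookrightarrow\Sigma_\sfs\times\Sigma_\sfb$, an isomorphism onto the graph of $\varPi_{\sfs\sfb}$.

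The step I expect to need the most care is the invariant-theoretic bookkeeping in the second fundamental theorem: one has to confirm that $\dim V_\sfs$ is small enough relative to $\dim V_\sfb$ for no syzygies to appear and---in the orthogonal (even) case---that the special orthogonal invariants do not exceed the orthogonal ones; the surjectivity of $\bq_\sfs$, which upgrades $\bq_\sfs^*$ from dominant to an isomorphism onto the $G_\sfb$-invariants, also needs a short verification. The remaining steps are formal manipulations with traces and Weyl-group invariants.
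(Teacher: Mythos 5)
Your proposal is correct and follows essentially the same route as the paper: injectivity of $\bq_\sfs^*$ from dominance/surjectivity of $\bq_\sfs$, surjectivity onto the $G_\sfb$-invariants via the first fundamental theorem (with the same observation that the $\SO(V_0)$- and $\on{O}(V_0)$-invariants coincide since $\dim V_1<\dim V_0$ in the even case), and part (b) by matching generators of $\BC[\fg_\sfb^*]^{G_\sfb}$ using the coincidence of the spectra of $A^tA$ and $AA^t$ and the vanishing of the Pfaffian. The only cosmetic differences are that you use power traces where the paper uses characteristic-polynomial coefficients, and your appeal to the second fundamental theorem is redundant given that you already deduced injectivity from surjectivity of $\bq_\sfs$.
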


\begin{proof}
  (a) In the odd case, the morphism $\bq_0$ is clearly dominant, so
  \[\bq_0^*\colon\BC[\fso(V_0)^*]^{\SO(V_0)}\to\BC[V_0\otimes V_1]^{\SO(V_0)\times\Sp(V_1)}\] is injective.
  It remains to prove the surjectivity of $\bq_0^*$. It is enough to prove the surjectivity of
  $\bq_0^*\colon\BC[\fso(V_0)^*]\to\BC[V_0\otimes V_1]^{\Sp(V_1)}$.
  According to the first fundamental theorem of
  the invariant theory for $\Sp(V_1)$~\cite{w}, the algebra
  $\BC[V_0\otimes V_1]^{\Sp(V_1)}$ is generated by the quadratic expressions
  $Q_{ij},\ 1\leq i<j\leq 2n$, of the following sort. We choose an orthonormal basis
  $e_1,\ldots,e_{2n}$ in $V_0$ and denote by $p_i,\ 1\leq i\leq 2n$, the corresponding
  projections $V_0\otimes V_1\to V_1$. Then
  \[Q_{ij}(v_0\otimes v_1,v'_0\otimes v'_1):=\langle p_i(v_0\otimes v_1),p_j(v'_0\otimes v'_1)\rangle.\]
  Now $\fso(V_0)$ is formed by all the skew-symmetric matrices in the above basis.
  We denote by $E_{ij}\in\fso(V_0)^*,\ 1\leq i<j\leq 2n$, the corresponding matrix element.
  Then $\bq_0^*(E_{ij})=Q_{ij}$. This proves the desired surjectivity claim.

  The argument in the even case is entirely similar.
  Note only that according to the first fundamental theorem of
  the invariant theory for $\SO(V_0)$~\cite{w}, the algebra
  $\BC[V_0\otimes V_1]^{\SO(V_0)}$ is generated by certain quadratic expressions along with
  degree $2n$ expressions (coming from determinants). But since $\dim V_1=2n-2<2n$, these
  determinants vanish identically (so that
  $\BC[V_0\otimes V_1]^{\SO(V_0)}=\BC[V_0\otimes V_1]^{\on{O}(V_0)}$).

  (b) The ring of invariant functions on $\fso(V_0)\cong\fso(V_0)^*$ is generated by the
  coefficients of the characteristic polynomial
  $\on{Char}_D(z)\nolinebreak=\nolinebreak z^{2n}\nolinebreak+\nolinebreak\sum_{i=1}^na_i(D)z^{2n-2i},$
  $D\nolinebreak\in\nolinebreak\fso(V_0)$,
  along with the Pfaffian $\on{Pfaff}(D)$. In terms of the identification
  $\BC[\fso(V_0)]^{\SO(V_0)}\cong\BC[\ft_0]^{W_0},\ a_i$
  is the $i$-th elementary symmetric polynomial in
  $\varepsilon_1^2,\ldots,\varepsilon_n^2$ (see~\S\ref{setup}),
  and $\on{Pfaff}=\varepsilon_1\cdots\varepsilon_n$.
  The ring of invariant functions on $\fsp(V_1)\cong\fsp(V_1)^*$ is generated by the
  coefficients of the characteristic polynomial
  $\on{Char}_C(z)=z^{\dim V_1}+\sum_{i=1}^{\dim V_1/2}b_i(C)z^{\dim V_1-2i},\ C\in\fsp(V_1)$.
  In terms of the identification
  $\BC[\fsp(V_1)]^{\Sp(V_1)}\cong\BC[\ft_1]^{W_1},\ b_i$
  is the $i$-th elementary symmetric polynomial in
  $\delta_1^2,\ldots,\delta_{\dim V_1/2}^2$.
  In the odd (resp.\ even) case, for $A\in\Hom(V_0,V_1)$
  we have $\on{Char}_{A^tA}(z)=\on{Char}_{AA^t}(z)$ (resp.\ $\on{Char}_{A^tA}(z)=z^2\on{Char}_{AA^t}(z)$).
  Also, in the even case $\on{Pfaff}(A^tA)=\sqrt{\det(A^tA)}=0$. The claim (b) follows.

  This completes the proof of the lemma.
\end{proof}

We will call $A\in V_0\otimes V_1$ {\em regular} if the Lie algebra
$\mathfrak{stab}_{\fso(V_0)\oplus\fsp(V_1)}(A)$ of its stabilizer
$\on{Stab}_{\SO(V_0)\times\Sp(V_1)}(A)$ has minimal possible dimension $n$ (both in even and odd cases).
Such elements form an open subset $(V_0\otimes V_1)^\reg \subset V_0\otimes V_1$.

\begin{lem}
  \label{regul}
  \textup{(a)} For $A\in V_\sfs\otimes V_\sfb$
  the following implications hold true: \[\bq_\sfb(A)\in\fg_\sfb^{*\reg}\ \Longrightarrow
  A\in(V_\sfs\otimes V_\sfb)^\reg\ \Longrightarrow \bq_\sfs(A)\in\fg_\sfs^{*\reg}.\]


  \textup{(b)} For $A\in(V_\sfs\otimes V_\sfb)^\reg$ such that $\bq_\sfb(A)$ is regular,
  we have \[\mathfrak{stab}_{\fg_\sfs}(\bq_\sfs(A))
  \xleftarrow[\on{pr}_\sfs]{}\mathfrak{stab}_{\fg_\sfs\oplus\fg_\sfb}(A)
  \xrightarrow[\on{pr}_\sfb]{\sim}\mathfrak{stab}_{\fg_\sfb}(\bq_\sfb(A)).\]
  Thus in view of Lemma~\ref{weyl}, passing to the images in categorical quotients
   we obtain a morphism
   $\pr_\sfs\pr_\sfb^{-1}\colon\varPi_{\sfs\sfb}^*\fz_\sfb\to\fz_\sfs$ of abelian Lie algebras bundles
   over $\Sigma_\sfs$.


  \textup{(c)} In view of identifications $\fz_\sfs\cong T^*\Sigma_\sfs,\ \fz_\sfb\cong T^*\Sigma_\sfb$
  of~\S\ref{setup}, the following diagram commutes:
  \begin{equation*}
    \begin{CD}
      \varPi_{\sfs\sfb}^*\fz_\sfb @>{\pr_\sfs\pr_\sfb^{-1}}>> \fz_\sfs\\
      @| @|\\
      \varPi_{\sfs\sfb}^*T^*\Sigma_\sfb @>{d^*\varPi_{\sfs\sfb}}>> T^*\Sigma_\sfs.
    \end{CD}
    \end{equation*}
\end{lem}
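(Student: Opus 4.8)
The plan is to study the stabilizer $\mathfrak{stab}(A):=\mathfrak{stab}_{\fso(V_0)\oplus\fsp(V_1)}(A)$ head on. Regarding $A$ as an element of $\Hom(V_0,V_1)$, one has $\mathfrak{stab}(A)=\{(D_0,D_1):D_1A=AD_0\}$, with $\pr_\sfs,\pr_\sfb$ the two coordinate projections. The first step is the \emph{adjoint identity}: taking adjoints of $D_1A=AD_0$ with respect to the two forms and using $D_0^t=-D_0$, $D_1^t=-D_1$, one gets $A^tD_1=D_0A^t$, hence $D_0$ commutes with $A^tA=\bq_0(A)$ and $D_1$ commutes with $AA^t=\bq_1(A)$. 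Under $\fg_\sfs\cong\fg_\sfs^*$, $\fg_\sfb\cong\fg_\sfb^*$ this shows that $\pr_\sfs$ and $\pr_\sfb$ take values in $\mathfrak{stab}_{\fg_\sfs}(\bq_\sfs(A))$ and $\mathfrak{stab}_{\fg_\sfb}(\bq_\sfb(A))$, so the maps of (b) make sense once regularity is in place.

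For (a), I would first treat the implication $\bq_\sfb(A)\in\fg_\sfb^{*\reg}\Rightarrow A\in(V_\sfs\otimes V_\sfb)^\reg$: then $\dim\mathfrak{stab}_{\fg_\sfb}(\bq_\sfb(A))=n$, and the kernel of $\pr_\sfb|_{\mathfrak{stab}(A)}$ is the set of operators with image in $\ker A$. In the odd case a regular element of $\fsp_{2n}$ has a $1$-dimensional kernel, so $\dim\ker A\le1$, and a form-skew operator of rank $\le1$ vanishes; in the even case a regular element of $\fso_{2n}$ has a $\le2$-dimensional kernel, which (since $\dim V_1=2n-2$) forces $A$ surjective; either way $\ker\pr_\sfb=0$, so $\dim\mathfrak{stab}(A)\le n$, and by upper semicontinuity (the minimal value being $n$ by definition of ``regular'') $\dim\mathfrak{stab}(A)=n$. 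The same computations, together with the facts that regular elements of $\fsp_{2n}$ are cyclic in $\fgl_{2n}$ (odd) and that $\ker A=\ker(A^tA)$ when $A^tA$ is regular (even), upgrade $\pr_\sfb$ to an isomorphism onto $\mathfrak{stab}_{\fg_\sfb}(\bq_\sfb(A))$, with $\pr_\sfb^{-1}$ given in the odd case by $p(AA^t)\mapsto\bigl(p(A^tA),p(AA^t)\bigr)$ ($p$ an odd polynomial) and in the even case by $D\mapsto(D,C)$ with $C(Av):=ADv$; this is used in (b). For the second implication $A\in(V_\sfs\otimes V_\sfb)^\reg\Rightarrow\bq_\sfs(A)\in\fg_\sfs^{*\reg}$, I would use Lemma~\ref{weyl}(a): the composite $q\colon V_\sfs\otimes V_\sfb\xrightarrow{\bq_\sfs}\fg_\sfs^*\to\fg_\sfs^*\dslash G_\sfs=\Sigma_\sfs$ is the GIT quotient, and since this representation is a $\theta$-representation — equivalently, since $\CN_{\bar1}$ has the expected dimension — $q$ is flat, so all its fibres have dimension $\dim(V_\sfs\otimes V_\sfb)-n$. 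For regular $A$ the $G_\sfs\times G_\sfb$-orbit has exactly this dimension, hence is dense in $q^{-1}(q(A))=\bq_\sfs^{-1}\bigl(\pi^{-1}(q(A))\bigr)$, where $\pi^{-1}(q(A))$ is the closure of the unique regular $G_\sfs$-orbit with invariants $q(A)$; applying the dominant map $\bq_\sfs$, $G_\sfs\cdot\bq_\sfs(A)$ is dense there, so it must be the regular orbit, i.e.\ $\bq_\sfs(A)$ is regular.

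For (b), on $U:=\bq_\sfb^{-1}(\fg_\sfb^{*\reg})$ — contained in the regular locus and mapped into $\fg_\sfs^{*\reg}$ by (a) — the three terms of $\mathfrak{stab}_{\fg_\sfs}(\bq_\sfs(A))\xleftarrow{\pr_\sfs}\mathfrak{stab}(A)\xrightarrow[\pr_\sfb]{\sim}\mathfrak{stab}_{\fg_\sfb}(\bq_\sfb(A))$ are the fibres of the vector bundles $q^*\fz_\sfs$, $\underline{\mathfrak{stab}}$ and $q^*\varPi_{\sfs\sfb}^*\fz_\sfb$ (for the last one I use Lemma~\ref{weyl}(b), which says the GIT-level map $\bq_\sfb$ equals $\varPi_{\sfs\sfb}\circ\bq_\sfs$, and the canonical identification of centralizers of regular elements within one adjoint-quotient fibre). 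This diagram is $(G_\sfs\times G_\sfb)$-equivariant and functorial, so descending along $q|_U$ (and extending over the remaining points of $\Sigma_\sfs$, where $\fz_\sfs,\fz_\sfb$ remain vector bundles) produces the desired morphism $\pr_\sfs\pr_\sfb^{-1}\colon\varPi_{\sfs\sfb}^*\fz_\sfb\to\fz_\sfs$ of abelian Lie-algebra bundles. For (c), both $\pr_\sfs\pr_\sfb^{-1}$ and $d^*\varPi_{\sfs\sfb}$ are morphisms of vector bundles over $\Sigma_\sfs$, so it is enough to compare them over a regular semisimple point; there, under the identifications $\fz\cong T^*\Sigma$ of~\cite[\S\S2.1,2.6]{bf}, the Kostant slices become Cartan subalgebras, $\varPi_{\sfs\sfb}$ becomes the linear map $\ft_\sfs^*\to\ft_\sfb^*$ of~\S\ref{setup} (an isomorphism, resp.\ the hyperplane inclusion), and both $\pr_\sfs\pr_\sfb^{-1}$ (via the explicit $\pr_\sfb^{-1}$ above) and $d^*\varPi_{\sfs\sfb}$ become its transpose — the matching being the computation underlying Lemma~\ref{weyl}(b).

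The hard part is the second implication of~(a): no naive dimension count suffices, because $\pr_\sfs$ need not be surjective for non-regular $A$, so one genuinely needs the flatness of the GIT quotient $\bq_\sfs$ — i.e.\ that $\CN_{\bar1}$ has the expected dimension — plus Kostant's description of the adjoint-quotient fibres. An alternative, more computational route would match the characteristic polynomials of $\bq_\sfs(A)$ and $\bq_\sfb(A)$ via Lemma~\ref{weyl}(b), use Flanders' theorem to control the nilpotent parts at $0$, and use the $\fso$/$\fsp$ constraint to pin down the $0$-part as the regular nilpotent — but carrying that out uniformly over all regular $A$ is considerably more delicate.
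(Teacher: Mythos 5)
Your overall strategy is sound, and it is genuinely different from the paper's in the places where it is most explicit. The paper disposes of the first implication of (a) by citing the classification of $G_\sfs\times G_\sfb$-orbits in $V_\sfs\otimes V_\sfb$ from Gruson--Leidwanger, and proves (b),(c) abstractly: for a Hamiltonian action the cokernel of the differential of the moment map at $x$ is dual to $\mathfrak{stab}_\fg(x)$, applied to the four-term exact sequence at a point of the Weierstra\ss\ section, together with Lemma~\ref{weyl}. Your direct computation of $\mathfrak{stab}(A)=\{(D_0,D_1):D_1A=AD_0\}$, the injectivity of $\pr_\sfb$ via the rank estimates on $\ker A$, and the explicit inverse $\pr_\sfb^{-1}$ (odd polynomials in $AA^t$ in the odd case, $C(Av):=ADv$ in the even case) replace both ingredients by elementary linear algebra; this buys a self-contained argument and an explicit formula that makes the verification of (c) at regular semisimple points concrete, at the cost of a case analysis. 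One small slip: a regular element of $\fsp_{2n}$ has kernel of dimension \emph{at most} $1$ (regular semisimple elements are invertible); you only use the upper bound, so nothing breaks.

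The genuine weak point is your second implication of (a). Granting flatness of $q=\pi\circ\bq_\sfs$ (which does hold, since $\CN_{\bar1}=q^{-1}(0)$ has the expected dimension by Gruson--Leidwanger --- though calling the odd part of $\osp(V_0|V_1)$ a $\theta$-representation is not literally accurate), your dimension count shows only that $G_\sfs\times G_\sfb\cdot A$ is open in \emph{some} irreducible component of $q^{-1}(q(A))$. To conclude that $\bq_\sfs(A)$ lands in the dense regular orbit of the Kostant fibre $\pi^{-1}(q(A))$ you need that this particular component dominates $\pi^{-1}(q(A))$ under $\bq_\sfs$, which does not follow from equidimensionality if the fibre is reducible. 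The paper's one-line appeal to the Weierstra\ss\ section has the same pressure point, and is ultimately backed by the orbit classification of \cite[Proposition 4]{gl}, which shows that the regular elements in each fibre of $q$ form a single orbit (necessarily the orbit of the section point, whose image under $\bq_\sfs$ is the Kostant section point and hence regular). The cleanest repair of your argument is to invoke that classification at this one step --- or to establish irreducibility of the fibres of $q$ --- rather than relying on the density count alone. With that patched, parts (b) and (c) as you set them up (descent of the equivariant stabilizer bundles over $\bq_\sfb^{-1}(\fg_\sfb^{*\reg})$ and comparison at regular semisimple points) go through.
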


\begin{proof}
  (a) The first implication follows from the classification of $G_\sfs\times G_\sfb$-orbits
  in $V_\sfs\otimes V_\sfb$, see~\cite[Theorem~6.5]{kp}
  and~\cite[Proposition~4]{gl}.\footnote{We learned the argument
    from A.~Berezhnoy, cf.~\cite[Theorems 1,2,7]{b}.}
  The second implication follows from the existence of a Weierstra\ss\ section~\cite[\S8.8]{pv},
  see e.g.~\cite[Proposition 3.1.1]{mo},
  \[(V_\sfs\otimes V_\sfb)\dslash(G_\sfs\times G_\sfb)
  =\Sigma_\sfs\hookrightarrow(V_\sfs\otimes V_\sfb)^\reg.\]


  Further, if a symplectic variety $X$ is equipped with a hamiltonian action of a Lie group $G$
  with Lie algebra $\fg$ and with a moment map $\bmu\colon X\to\fg^*$, then for a point $x\in X$,
  the cokernel of the differential $d\bmu\colon T_xX\to\fg^*$ is dual to $\mathfrak{stab}_\fg(x)$.
  For (b) we may assume that $A$ lies in the image of a Weierstra\ss\ section
  $\Sigma_\sfs\hookrightarrow(V_\sfs\otimes V_\sfb)^\reg$. Then we have an exact sequence
  \[0\to\mathfrak{stab}_{\fg_\sfs\oplus\fg_\sfb}(A)\to\fg_\sfs\oplus\fg_\sfb\to
  T_A(V_\sfs\otimes V_\sfb)\to T_A\Sigma_\sfs\to0,\]
  and (b) follows from~Lemma~\ref{weyl}(b) since the differential $d\bq_\sfs$ identifies
  $T_A\Sigma_\sfs$ with $T_{\bq_\sfs(A)}\Sigma_\sfs$.


  (c) again follows from $\mathfrak{stab}_\fg(x)^*=\on{Coker}(d\bmu)$ and the last claim
    of~Lemma~\ref{weyl}.
\end{proof}

\begin{prop}
  \label{kostant}
  Given a $G_\sfs$-module $V$ and a $G_\sfb$-module $V'$, the Kostant functors of
  restriction to Kostant slices (notation of~\S\ref{setup}) induce an isomorphism

  \begin{multline*}
    \Hom_{G_\sfs\times G_\sfb\ltimes\BC[V_\sfs\otimes V_\sfb]}(\BC[V_\sfs\otimes V_\sfb],
  V\otimes\BC[V_\sfs\otimes V_\sfb]\otimes V')\\
  \iso \Hom_{T\Sigma_\sfs}(\CO_{\Sigma_\sfs},\kappa_\sfs(V)\otimes_{\BC[\Sigma_\sfs]}
  d\varPi_{\sfs\sfb}^*\kappa_\sfb(V')).
  \end{multline*}
  Here we view $\Sigma_\sfs$ as the zero section of the tangent bundle $T\Sigma_\sfs$, so that
  $\CO_{\Sigma_\sfs}$ acquires a structure of $\CO_{T\Sigma_\sfs}$-module.
\end{prop}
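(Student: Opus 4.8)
The plan is to recognize both sides as outcomes of Kostant--Whittaker reduction of the Hamiltonian $G_\sfs\times G_\sfb$-space $V_\sfs\otimes V_\sfb$ (with moment map $(\bq_\sfs,\bq_\sfb)$), performed first with respect to $G_\sfb$ and then with respect to $G_\sfs$. On the left, since $\sG_{\bar0}=G_\sfs\times G_\sfb$ is reductive and $\fG^\bullet=\BC[V_\sfs\otimes V_\sfb]$ once the grading is disregarded, the Hom space is the invariant space $(V\otimes V'\otimes\BC[V_\sfs\otimes V_\sfb])^{G_\sfs\times G_\sfb}$, which I would compute by taking $G_\sfb$-invariants and then $G_\sfs$-invariants. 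On the right, since $T\Sigma_\sfs$ is affine, the Hom is the module of global sections on $T\Sigma_\sfs$ of $\kappa_\sfs(V)\otimes_{\BC[\Sigma_\sfs]}d\varPi_{\sfs\sfb}^*\kappa_\sfb(V')$, and by construction of the Kostant functors (\cite{bf}) together with the external-product description in the style of \lemref{tensor}, this is exactly the corresponding double reduction. Both sides moreover carry a $\BC[T\Sigma_\sfs]=U(\fz_\sfs)$-action --- the left one produced by the reduction along $\bq_\sfs$, the right one tautological.

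The first step is then to invoke the formalism of \cite{bf}: Kostant--Whittaker reduction of a Hamiltonian $H$-space agrees with the Kostant functor and is compatible with the relevant module structures, so that reduction of $V\otimes\BC[V_\sfs\otimes V_\sfb]\otimes V'$ with respect to $G_\sfb$ produces $\kappa_\sfb(V')$ tensored over $\BC[\Sigma_\sfb]$ with the $G_\sfs$-equivariant reduction of $V\otimes\BC[V_\sfs\otimes V_\sfb]$, and a further reduction with respect to $G_\sfs$ peels off $\kappa_\sfs(V)$. This reduces the whole statement to the case $V=V'=\BC$, i.e.\ to identifying the double reduction of $\BC[V_\sfs\otimes V_\sfb]$ itself with $d\varPi_{\sfs\sfb}^*\BC[T\Sigma_\sfb]=d\varPi_{\sfs\sfb}^*\kappa_\sfb(\BC)$ as a $\BC[T\Sigma_\sfs]$-module. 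This is precisely what \S\ref{IT} delivers: \lemref{weyl}(a) identifies $(V_\sfs\otimes V_\sfb)\dslash(G_\sfs\times G_\sfb)$ with $\Sigma_\sfs$ and \lemref{weyl}(b) identifies the complete moment map with the graph of $\varPi_{\sfs\sfb}$; over the open locus where $\bq_\sfb$ is regular, \lemref{regul}(a) forces the relevant fibers to consist of regular elements, so that the reduction locus is a torsor under the universal centralizer, and \lemref{regul}(b) identifies this centralizer $\mathfrak z_\sigma$ compatibly with $\mathfrak{stab}_{\fg_\sfs}(\bq_\sfs(A))$ and $\mathfrak{stab}_{\fg_\sfb}(\bq_\sfb(A))$, while \lemref{regul}(c) identifies the resulting $U(\fz_\sfs)$-structure with the pullback along $d^*\varPi_{\sfs\sfb}$. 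This gives the desired identification over the regular locus.

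To upgrade this to all of $\Sigma_\sfs$ I would combine injectivity --- which is clear, since restriction of equivariant sections along a Weierstra\ss\ section is injective: the $G_\sfs\times G_\sfb$-saturation of its image is dense in the irreducible variety $V_\sfs\otimes V_\sfb$, and a section of the free sheaf $\underline{V\otimes V'}$ vanishing on a dense subset is zero --- with a count of dimensions (or Hilbert series, using the grading on $\fG^\bullet$) showing the two sides have the same size, or with an $S_2$/normality argument exploiting that $V_\sfs\otimes V_\sfb$ is smooth affine. The odd and even cases \ref{OSE}(a,b) are handled uniformly in the blinking notation, the even-case features --- vanishing of the $\SO(V_0)$-determinants, so that $\SO(V_0)$- and $\on{O}(V_0)$-invariants coincide, and $\varPi_{\sfs\sfb}$ being a closed embedding rather than a double cover --- being accommodated exactly as in the proofs of Lemmas~\ref{weyl} and~\ref{regul}.

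I expect the main obstacle to be precisely this passage from the regular locus to all of $\Sigma_\sfs$: the Kostant-functor sheaves $\kappa_\sfs(V),\kappa_\sfb(V')$ need not be locally free, so one must either verify that the discrepancy locus has codimension $\geq2$ (not obvious, since the discriminant is a divisor) or replace the normality argument by a precise size comparison. A secondary point requiring care is the exact matching of the $U(\fz_\sfs)$-action on the left-hand side produced by Kostant--Whittaker reduction with the tautological one on the right, for which \lemref{regul}(c) is the decisive input, together with the verification that the ``double reduction'' of \cite{bf} is associative in the sense used above.
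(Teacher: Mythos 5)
Your overall strategy is the right one and one of the branches you list is in fact the paper's argument, but you leave the decisive step unresolved, and the way you hedge suggests you are looking at the wrong open locus. The paper's proof is two lines: the complement of $(V_\sfs\otimes V_\sfb)^\reg$ \emph{inside the linear space} $V_\sfs\otimes V_\sfb$ has codimension at least $2$, so the left-hand side --- a space of equivariant sections of a free sheaf on the smooth affine variety $V_\sfs\otimes V_\sfb$ --- is unchanged by restriction to $(V_\sfs\otimes V_\sfb)^\reg$; on that locus \lemref{regul}(c) identifies it with the right-hand side. Here ``regular'' means minimal stabilizer dimension, as defined just before \lemref{regul}, and the codimension-$\ge 2$ assertion is about that locus, not about the preimage of the regular locus of $\fg_\sfb^*$.

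This is where your worry goes astray. You set up the computation over the locus where $\bq_\sfb$ is regular; the complement of that locus is (the preimage of) a discriminant divisor, so you are right that normality alone does not let you cross it, and you are left choosing between an unverified codimension claim and an unspecified ``size comparison.'' But by \lemref{regul}(a) the set $(V_\sfs\otimes V_\sfb)^\reg$ is in general strictly larger than $\bq_\sfb^{-1}(\fg_\sfb^{*\reg})$, and it is for \emph{its} complement that the codimension bound holds and is used. Once you work on $(V_\sfs\otimes V_\sfb)^\reg$, the Hartogs/normality extension is automatic (the sheaf being extended is the free sheaf $V\otimes V'\otimes\CO$ on a smooth variety, so local freeness of $\kappa_\sfs(V)$, $\kappa_\sfb(V')$ is not an issue --- the Kostant-functor sheaves only appear after the identification, on the target side), and the identification of stabilizer bundles with $d\varPi_{\sfs\sfb}^*$ of the universal centralizers is exactly \lemref{regul}(b,c), first on the dense sublocus where $\bq_\sfb$ is regular and then by continuity of morphisms of vector bundles over $\Sigma_\sfs$. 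Your preliminary reduction to $V=V'=\BC$ via iterated Kostant--Whittaker reduction is not needed (the paper treats $V\otimes\BC[V_\sfs\otimes V_\sfb]\otimes V'$ all at once), though it is not wrong; the genuine gap is that you never establish, or even commit to, the codimension-$\ge 2$ statement on which the whole argument rests.
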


\begin{proof}
  Since the codimension of the complement $(V_\sfs\otimes V_\sfb)\setminus(V_\sfs\otimes V_\sfb)^\reg$
  in $V_\sfs\otimes V_\sfb$ is at least 2, the LHS can be computed after restriction to
  $(V_\sfs\otimes V_\sfb)^\reg$, and then it coincides with the RHS by~Lemma~\ref{regul}(c).
\end{proof}

\subsection{Nilpotent support and compactness}
\label{nilpotent}
We prove Theorem~\ref{main}(f). The argument repeats the proof of~\cite[Theorem 12.5.3]{aga}.
Namely, $D^{\on{comp}}_{\SO_{N-1}}(\Gr_{\SO_N})$ is generated by
$D^b_{\SO(N-1,\bO)}(\Gr_{\SO_{N-1}})*\widetilde{E}_0*D^b_{\SO(N,\bO)}(\Gr_{\SO_N})$ by the argument of
{\em loc.\ cit.} Here $\widetilde{E}_0$ stands for the averaging $\on{Av}_{\SO(N-1,\bO),!}E_0$.
Again by {\em loc.\ cit.} $\widetilde{E}_0$ is isomorphic (up to a shift) to
$\Phi(\fG^\bullet\otimes_{\Sym(\fg_\sfs[-2])^{G_\sfs}}\BC)$
(we use the homomorphism $\bq_\sfs^*\colon \Sym(\fg_\sfs[-2])\to\fG^\bullet$).
Also,~Lemma~\ref{weyl}(a) implies $\BC[V_\sfs\otimes V_\sfb]\otimes_{\BC[\fg_\sfs]^{G_\sfs}}\BC=\BC[\CN_{\bar1}]$.
Now the desired equivalence follows by the compatibility with the spherical Hecke actions.

Recall the Weierstra\ss\ section $\Sigma_\sfs\hookrightarrow V_\sfs\otimes V_\sfb$ of the proof
of~Lemma~\ref{regul}(a). For $\CA\in D_\perf^{\sG_{\bar0}}(\fG^\bullet)$ we have a canonical
isomorphism $\Gamma(\Sigma_\sfs,\CA|_{\Sigma_\sfs})\cong H^\bullet_{\SO(N-1,\bO)}(\Gr_{\SO_N},\Phi(\CA))$.
The intersection $\Sigma_\sfs\cap\CN_{\bar1}$ is just one point (a regular nilpotent element
$A\in\Hom(V_0,V_1)$), so the nilpotent support condition implies
$\dim\Gamma(\Sigma_\sfs,\CA|_{\Sigma_\sfs})<\infty$. Conversely, since the support of $\CA$ is
invariant with respect to dilations, the condition $\dim\Gamma(\Sigma_\sfs,\CA|_{\Sigma_\sfs})<\infty$
implies $\on{supp}\CA\subset\CN_{\bar1}$.

This completes the proof of Theorem~\ref{main}(f).

\subsection{The monoidal property of $\Phi$}
\label{monoidal}
The argument is similar to that of~\cite[\S3.16]{bfgt}.
The monoidal structure $\otimes_{\fG^\bullet}$ on $D^{\sG_{\bar0}}_\perf(\fG^\bullet)$ is defined via
the kernel $\BC[\Delta]^\bullet$: the diagonal $\sG_{\bar0}$-equivariant dg-$\fG^\bullet$-trimodule.
The fusion monoidal structure $\star$ on $D^b_{\SO_{N-1}}(\Gr_{\SO_N})$ transferred to
$D^{\sG_{\bar0}}_\perf(\fG^\bullet)$ via the equivalence $\Phi$ is also defined via a kernel $\sK^\bullet$
(a $\sG_{\bar0}$-equivariant dg-$\fG^\bullet$-trimodule). We have to construct an isomorphism of
$\sG_{\bar0}$-equivariant dg-$\fG^\bullet$-trimodules $\BC[\Delta]^\bullet\iso\sK^\bullet$.

The purity of $\star$ implies the formality of $\sK^\bullet$, and it suffices to identify
$\BC[\Delta]^\bullet\iso\sK^\bullet$ as trimodules over the commutative graded algebra $\fG^\bullet$.
We know that the deequivariantized category $D^\deeq_{\SO_{N-1}}(\Gr_{\SO_N})$ is generated by $E_0$.
Furthermore, in the induced monoidal structure $\star$ of $D^\deeq_{\SO_{N-1}}(\Gr_{\SO_N})$ we have
$E_0\star E_0=E_0$, and finally, $\Ext_{D^\deeq_{\SO_{N-1}}(\Gr_{\SO_N})}(E_0,E_0)=\fG^\bullet$. The desired
isomorphism follows.

This completes the proof of the monoidal property of $\Phi$ along with~Theorem~\ref{main}.

\section{Complements}
\label{3}

\subsection{Loop rotation and quantization}
\label{loop}
We have $H_{\BG_m}^\bullet(\pt)=\BC[\hbar]$. We consider the ``graded Weyl algebra'' $\fD^\bullet$
of $V_0\otimes V_1$: a $\BC[\hbar]$-algebra generated by $V_0\otimes V_1$ with relations
$[v_0\otimes v_1,v'_0\otimes v'_1]=(v_0,v'_0)\cdot\langle v_1,v'_1\rangle\cdot\hbar$ (notation
of~\S\ref{setup}). It is equipped with the grading $\deg(v_0\otimes v_1)=1,\ \deg\hbar=2$.
We will view it as a dg-algebra with trivial differential, equipped with a natural action
of $\sG_{\bar0}=\SO(V_0)\times\Sp(V_1)$.

\begin{thm}
  \label{quantum}
  There exists an equivalence of triangulated categories
  $\Phi_\hbar\colon D^{\sG_{\bar0}}_\perf(\fD^\bullet)\iso D^b_{\SO(N-1,\bO)\rtimes\BG_m}(\Gr_{\SO_N})$
  commuting with the actions of the monoidal spherical Hecke categories
  $\Perv_{\SO(N-1,\bO)\rtimes\BG_m}(\Gr_{\SO_{N-1}})$ and
  $\Perv_{\SO(N,\bO)\rtimes\BG_m}(\Gr_{\SO_N})$ by the left and right convolutions.
\end{thm}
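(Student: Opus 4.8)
The plan is to rerun the proof of Theorem~\ref{main}(a) of \S\S\ref{deeq}--\ref{calcul ext} with the loop rotation torus $\BG_m$ adjoined throughout, so that $H^\bullet_{\BG_m}(\pt)=\BC[\hbar]$ becomes the ground ring. First I would deequivariantize the left and right convolution actions of the loop-rotation-equivariant spherical Hecke categories $\Perv_{\SO(N-1,\bO)\rtimes\BG_m}(\Gr_{\SO_{N-1}})\cong\Rep(G_\sfs)$ and $\Perv_{\SO(N,\bO)\rtimes\BG_m}(\Gr_{\SO_N})\cong\Rep(G_\sfb)$ on $D^b_{\SO(N-1,\bO)\rtimes\BG_m}(\Gr_{\SO_N})$ (these categories are still semisimple, the equivariant IC-sheaves being canonically $\BG_m$-equivariant). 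The $\RHom$-algebra of $E_0$ in the resulting deequivariantized category is again formal: the only input is purity together with the cellularity of the smooth convolution diagrams $\Gr^{\varpi_{N-1}}_{\SO_{N-1}}\wtimes\cdots\wtimes\Gr^{\varpi_N}_{\SO_N}\to\Gr_{\SO_N}$ and of the fibres $\bm^{-1}(0)$, which is unaffected by the extra $\BG_m$ (cf.\ Lemma~\ref{formality}). Write $\fE^\bullet_\hbar$ for the resulting graded $\BC[\hbar]$-algebra with its $\sG_{\bar0}$-action; by purity it is flat over $\BC[\hbar]$ with $\fE^\bullet_\hbar/\hbar\fE^\bullet_\hbar\cong\fE^\bullet=\fG^\bullet$, and the Ginzburg-type argument of Lemma~\ref{purity}, in the loop-rotation-equivariant form of \cite{bf}, gives a canonical equivalence $D^{\sG_{\bar0}}_\perf(\fE^\bullet_\hbar)\iso D^b_{\SO(N-1,\bO)\rtimes\BG_m}(\Gr_{\SO_N})$.

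The core point is then the identification $\fE^\bullet_\hbar\cong\fD^\bullet$. Here $\fE^\bullet_\hbar$ is a $\sG_{\bar0}$-equivariant flat graded $\BC[\hbar]$-deformation of $\fG^\bullet$ generated by its lowest-degree piece, so the bracket of two generators is $\sG_{\bar0}$-equivariant and skew, hence $\lambda\hbar$ times the unique invariant pairing $(v_0,v_0')\langle v_1,v_1'\rangle$; thus $\fE^\bullet_\hbar$ is either the trivial deformation or, after rescaling, exactly $\fD^\bullet$. To pin this down I would run the loop-rotation-equivariant versions of the ingredients of \S\S\ref{calcul ext}--\ref{IT}, all of which are routine upgrades: the equivariant Ginzburg theorem of \cite{g1,bf} provides $H^\bullet_{\SO(N,\bO)\rtimes\BG_m}(\Gr_{\SO_N})$ and, with IC-coefficients, the quantized Kostant functors $\kappa_{\sfb,\hbar}$, whence $H^\bullet_{\SO(N-1,\bO)\rtimes\BG_m}(\Gr_{\SO_N},\IC_{\lambda_\sfb})\cong d\varPi_{\sfs\sfb}^*\kappa_{\sfb,\hbar}(V_{\lambda_\sfb})$; the nearby-cycles argument of Lemma~\ref{tensor} in the Beilinson--Drinfeld Grassmannian is $\BG_m$-equivariant verbatim (properness gives the cospecialization isomorphism, and the two commuting squares of Lemma~\ref{tensor} persist, with the fibrewise addition on $T\Sigma_\sfb$ replaced by its $\hbar$-deformation), yielding $H^\bullet_{\SO(N-1,\bO)\rtimes\BG_m}(\Gr_{\SO_N},\IC_{\lambda_\sfs}*\IC_{\lambda_\sfb})\cong\kappa_{\sfs,\hbar}(V_{\lambda_\sfs})\otimes d\varPi_{\sfs\sfb}^*\kappa_{\sfb,\hbar}(V_{\lambda_\sfb})$; the injectivity of Lemma~\ref{inj} persists by the same freeness-and-cellularity argument, now over $\BC[\hbar]\otimes H^\bullet_{\SO(N-1,\bO)}(\pt)$; and the Invariant-Theory statements of \S\ref{IT} survive because the Weierstra\ss\ section $\Sigma_\sfs\hookrightarrow V_\sfs\otimes V_\sfb$, the codimension $\ge2$ of the non-regular locus and the stabilizer identifications of Lemmas~\ref{weyl}--\ref{regul} are $\hbar$-independent, so the quantum Kostant--Whittaker reduction of the Weyl algebra $\fD^\bullet=\mathcal W_\hbar(V_\sfs\otimes V_\sfb)$ by $\sG_{\bar0}$ along its quadratic quantized moment map yields the quantum analogue of Proposition~\ref{kostant}. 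Combining these exactly as in Lemma~\ref{phiso} produces an embedding of graded $\BC[\hbar]$-algebras $\fE^\bullet_\hbar\hookrightarrow\fD^\bullet$ reducing to the identity of $\fG^\bullet$ modulo $\hbar$; since both sides are flat over $\BC[\hbar]$, it is an isomorphism. Feeding $\fE^\bullet_\hbar\cong\fD^\bullet$ into the Ginzburg-type equivalence above yields $\Phi_\hbar$.

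Compatibility of $\Phi_\hbar$ with the convolution actions of the loop-rotation-equivariant spherical Hecke categories is then checked as in \S\ref{compa}: after deequivariantizing over the respective semisimple subcategories the relevant $\RHom$-algebras are formal by purity, so the induced map of graded $\BC[\hbar]$-algebras is determined by its restriction to generators, and one identifies it with the quantized comoment map $U_\hbar(\fg_\sfs)\to\fD^\bullet$ (resp.\ $U_\hbar(\fg_\sfb)\to\fD^\bullet$) of the Hamiltonian $\sG_{\bar0}$-action on $\fD^\bullet$; the only new feature relative to \S\ref{compa} is the $\hbar$-correction to the $\BG_m$-equivariant first Chern class $c_1^{\BG_m}(\CL_N)$ entering through the composition $h_\hbar^*\circ h_\hbar$ (this also certifies that the deformation above is non-trivial), with the case $N=4$ again left to the interested reader.

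I expect the main obstacle to be the quantum Kostant--Whittaker reduction underlying the quantum analogue of Proposition~\ref{kostant}, and, closely related, the correct identification of the $\hbar$-deformed ``coproduct'' in the loop-rotation-equivariant form of Lemma~\ref{tensor}: one must check that the nearby-cycles cospecialization is compatible with the quantization of the fibrewise addition on $T\Sigma_\sfb$, i.e.\ with the factorization structure on $\BG_m$-equivariant cohomology. Once these are in hand, everything else is a mechanical loop-rotation-equivariant upgrade of the arguments of \S\ref{2}.
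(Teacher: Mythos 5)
Your proposal is correct in outline and its skeleton coincides with the paper's: deequivariantize over the loop‑rotation‑equivariant spherical Hecke categories, establish formality of $\RHom(E_0,E_0)$ by the same purity/cellularity argument, obtain the Ginzburg‑type equivalence $D^{\sG_{\bar0}}_\perf(\fE_\hbar^\bullet)\iso D^b_{\SO(N-1,\bO)\rtimes\BG_m}(\Gr_{\SO_N})$, and then identify $\fE_\hbar^\bullet$ with $\fD^\bullet$ by observing that a $\sG_{\bar0}$‑invariant degree $-1$ bracket on $\fG^\bullet$ is unique up to a scalar. The divergence is in how that scalar is pinned down. The paper does \emph{not} rerun the quantized versions of Lemma~\ref{tensor}, Lemma~\ref{inj} and Proposition~\ref{kostant}: instead it forgets the grading, notes that the constant is determined by requiring the comoment map $\bq_0^*\colon\BC[\fso(V_0)^*]\to\fE$ to be Poisson, and deduces this from the commutative square of \S\ref{compa} relating $\bq_0^*$ to the convolution functor $\Upsilon$ together with \cite[Theorem 5]{bf}, which says that the loop‑rotation deformation of the spherical Hecke category induces the \emph{standard} Poisson structure on $\fso(V_0)^*$. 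This one‑paragraph argument entirely sidesteps what you correctly identify as the main obstacle in your route --- the quantum Kostant--Whittaker reduction of the Weyl algebra and the $\hbar$‑deformed version of the cospecialization/addition compatibility in Lemma~\ref{tensor} --- none of which is needed. Your secondary remark, that the $\hbar$‑correction to $h_\hbar^*\circ h_\hbar=c_1^{\BG_m}(\CL_N)$ certifies non‑triviality of the deformation, is in fact much closer in spirit to what the paper actually does (it is essentially the generator‑level manifestation of the Poisson property of $\bq_0^*$), and if you developed that remark you would recover the paper's proof; as written, though, your primary route carries a substantial burden of unproved quantized statements that the paper's shortcut renders unnecessary.
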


\begin{proof}
  We essentially repeat the argument of~\cite[\S5.2]{bfgt}. We set
  $\fE_\hbar^\bullet:=\Ext^\bullet_{D_{\SO(N-1,\bO)\rtimes\BG_m}^\deeq(\Gr_{\SO_N})}(E_0,E_0)$. Since it is an
  Ext-algebra in the deequivariantized category, it is automatically equipped with an action of
  $\SO(V_0)\times\Sp(V_1)=\sG_{\bar0}$, and we can consider the corresponding triangulated category
  $D^{\sG_{\bar0}}(\fE_\hbar^\bullet)$. Similarly to~Lemma~\ref{purity}, there is a canonical
  equivalence $D^{\sG_{\bar0}}(\fE_\hbar^\bullet)\iso D^b_{\SO(N-1,\bO)\rtimes\BG_m}(\Gr_{\SO_N})$.
  It remains to construct an isomorphism $\phi_\hbar^\bullet\colon\fD^\bullet\iso\fE_\hbar^\bullet$.

  Note that $\fE_\hbar^\bullet$ is a $\BC[\hbar]$-algebra, and
  \[\fE_\hbar^\bullet/(\hbar=0)=\fE^\bullet\cong\fG^\bullet=\Sym(\Pi(V_0\otimes V_1)[-1]),\] so that
  $\fE^\bullet$ acquires a Poisson bracket from this deformation. We claim that this Poisson bracket
  arises from the symplectic form $(,)\otimes\langle,\rangle$ on $V_0\otimes V_1$.
  Indeed, by construction, this Poisson bracket is $\SO(V_0)\times\Sp(V_1)$-invariant of degree
  $-1$. There is a unique such bracket up to a multiplicative constant, and we just have to
  determine this constant. We may and will forget the grading. The desired constant is determined
  by the condition that the moment map
  \[\bq_0^*\colon\BC[\fso(V_0)^*]\to\BC[V_0\otimes V_1]\cong\Sym(V_0\otimes V_1)\cong\fE\] is Poisson
  (where $\fE$ stands for $\fE^\bullet$ with grading forgotten). The verification of this
  condition is identical in the odd and even cases, and we consider the odd case only.
  We have functors
  \[\Upsilon_\hbar\colon D^b_{\SO(N-1,\bO)\rtimes\BG_m}(\Gr_{\SO_{N-1}})\to
    D^b_{\SO(N-1,\bO)\rtimes\BG_m}(\Gr_{\SO_N}),\ \CF\mapsto\CF*E_0;\]
  \[\Upsilon\colon D^b_{\SO(N-1,\bO)}(\Gr_{\SO_{N-1}})\to
    D^b_{\SO(N-1,\bO)}(\Gr_{\SO_N}),\ \CF\mapsto\CF*E_0.\]
  By the argument of~\S\ref{compa}, the diagram
  \begin{equation}
    \begin{CD}
      D^{\SO_{N-1}}_\perf\big(\Sym(\fso(V_0)[-2])\big) @>{\bq_0^*}>>
      D^{\sG_{\bar0}}_\perf\big(\Sym(\Pi(V_0\otimes V_1)[-1])\big)\\
      @V{\boldsymbol\beta}V{\wr}V @V{\Phi}V{\wr}V\\
      D^b_{\SO(N-1,\bO)}(\Gr_{\SO_{N-1}}) @>\Upsilon>>  D^b_{\SO(N-1,\bO)}(\Gr_{\SO_N})
    \end{CD}
  \end{equation}
  commutes, where $\boldsymbol\beta$ stands for the second equivalence of~\cite[Theorem 5]{bf}.
  But by the same~\cite[Theorem 5]{bf}, the deformation
  $D^b_{\SO(N-1,\bO)\rtimes\BG_m}(\Gr_{\SO_{N-1}})$ of $D^b_{\SO(N-1,\bO)}(\Gr_{\SO_{N-1}})$ induces the
  standard Poisson structure on $\fso(V_0)^*$. It follows that
  $\bq_0^*\colon\BC[\fso(V_0)^*]\to\fE$ is Poisson.

  Finally, $\fD^\bullet$ is a unique graded $\BC[\hbar]$-algebra with
  $\fD^\bullet/(\hbar=0)=\Sym(\Pi(V_0\otimes V_1)[-1])$ such that the corresponding Poisson
  bracket on $\Sym(\Pi(V_0\otimes V_1)[-1])$ is the standard one. Thus the desired isomorphism
  $\phi_\hbar$ is constructed along with equivalence $\Phi_\hbar$.
\end{proof}

\subsection{Gaiotto conjectures}
\label{gaiotto}
We recall the setup and notation of~\cite[\S2]{bfgt}. Given a nonnegative integer $m$ such that
$2m+1\leq N$ we set $M=N-1-2m$ and consider an orthogonal decomposition
$\BC^N=\BC^{2m}\oplus\BC^{M+1}$. Furthermore, we choose an anisotropic vector $v\in\BC^{M+1}$,
and set $\BC^M=(\BC v)^\perp$. It gives rise to an embedding
$\SO_M\hookrightarrow\SO_{M+1}\hookrightarrow\SO_N$.
We choose a complete self-orthogonal flag
\[0\subset\sL^1\subset\sL^2\subset\ldots\subset\sL^{2m-1}\subset\BC^{2m},\ \sL^i=(\sL^{2m-i})^\perp.\]
We consider the following partial flag in $\BC^N$:
\[0\subset\sL^1\subset\ldots\subset\sL^m\subset\sL^m\oplus\BC^{M+1}\subset
\sL^{m+1}\oplus\BC^{M+1}\subset\ldots\subset\sL^{2m-1}\oplus\BC^{M+1}\subset\BC^N.\]
We consider a unipotent subgroup $U_{M,N}\subset\SO_N$ with Lie algebra $\fu_{M,N}\subset\fso_N$
formed by all the endomorphisms preserving the above partial flag and inducing the zero
endomorphism of the associated graded space. The composition with orthogonal projection
$\BC^N\twoheadrightarrow\BC^{2m}$ gives rise to a morphism
$U_{M,N}\twoheadrightarrow U_{2m}$ onto the upper triangular unipotent subgroup of $\SO_{2m}$.
Note that this morphism is {\em not} a homomorphism. Nevertheless,
composing this morphism with a regular character $U_{2m}\to\BG_a$ we obtain a character
$\chi'_{M,N}\colon U_{M,N}\to\BG_a$. Furthermore, we choose a vector $\ell\in\sL^m\setminus\sL^{m-1}$.
Then the matrix coefficient $u\mapsto(uv,\ell)$ defines a character $\fu_{M,N}\to\BC$.
The corresponding character $U_{M,N}\to\BG_a$ will be denoted $\chi''_{M,N}$. Finally, we set
$\chi^0_{M,N}:=\chi'_{M,N}+\chi''_{M,N}\colon U_{M,N}\to\BG_a$.
Note that the pair $(U_{M,N},\chi^0_{M,N})$ is invariant under the conjugation action of
$\SO_M\subset\SO_N$.

We extend scalars to the Laurent series field $\bF$ to obtain the same named character of
$U_{M,N}(\bF)$. We define \[\chi_{M,N}:=\Res_{t=0}\chi^0_{M,N}\colon U_{M,N}(\bF)\to\BG_a.\]
Let $\kappa_N$ stand for the bilinear form $\frac12\on{Tr}(X\cdot Y)$ on $\fso_N$.
It corresponds to the determinant line bundle on $\Gr_{\SO_N}$ (the ample generator of the
Picard group). Given $c\in\BC^\times$ we consider the derived category
$D_{c^{-1}}^{\SO(M,\bO)\ltimes U_{M,N}(\bF),\chi_{M,N}}(\Gr_{\SO_N})$ of
$(\SO(M,\bO)\ltimes U_{M,N}(\bF),\chi_{M,N})$-equivariant $D$-modules on $\Gr_{\SO_N}$ twisted
by $c^{-1}\kappa_N$.

\bigskip

On the dual side, in the odd case~\S\ref{OSE}(a), we consider the Lie superalgebra
$\osp(2n-2m|2n)$. In the even case~\S\ref{OSE}(b), we consider the Lie superalgebra
$\osp(2n|2n-2m-2)$. The Killing form $\Killing_{2n-2m|2n}$ (resp.\ $\Killing_{2n|2n-2m-2}$)
is proportional to the supertrace form of the defining representation
$\kappa_{2n-2m|2n}(X,Y)=\on{sTr}(X\cdot Y)$:
\[\Killing_{2n-2m|2n}=(-2m-2)\kappa_{2n-2m|2n}\ (\on{resp.}\ \Killing_{2n|2n-2m-2}=2m\kappa_{2n|2n-2m-2}),\]
see~\cite[2.7.7.(c)]{m}. For $c\in\BC$ we consider the derived Kazhdan-Lusztig category
$\on{KL}_c(\widehat\osp(2n-2m|2n))$ of $\SO(2n-2m,\bO)\times\Sp(2n,\bO)$-equivariant objects
in $\widehat\osp(2n-2m|2n)$-mod at central charge $c\cdot\kappa_{2n-2m|2n}-\frac12\Killing_{2n-2m|2n}$
(resp.\ the derived category $\on{KL}_c(\widehat\osp(2n|2n-2m-2))$ of
$\SO(2n,\bO)\times\Sp(2n-2m-2,\bO)$-equivariant objects
in $\widehat\osp(2n|2n-2m-2)$-mod at central charge
$c\cdot\kappa_{2n|2n-2m-2}-\frac12\Killing_{2n|2n-2m-2}$).

\begin{conj}
  \label{davide}
  \textup{(a)} In the odd case~\ref{OSE}(a), for $c\in\BC^\times$ the categories
  $D_{c^{-1}}^{\SO(M,\bO)\ltimes U_{M,N}(\bF),\chi_{M,N}}(\Gr_{\SO_N})$ and
  $\on{KL}_c(\widehat\osp(2n-2m|2n))$ are equivalent as factorization categories.

  \textup{(b)} In the even case~\ref{OSE}(b), for $c\in\BC^\times$ the categories
  $D_{c^{-1}}^{\SO(M,\bO)\ltimes U_{M,N}(\bF),\chi_{M,N}}(\Gr_{\SO_N})$ and
  $\on{KL}_c(\widehat\osp(2n|2n-2m-2))$ are equivalent as factorization categories.
\end{conj}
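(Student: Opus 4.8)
The plan is to treat \conjref{davide} as a quantization of \thmref{main} and \thmref{quantum}: deform the equivalence $\Phi_\hbar$ of \thmref{quantum} along the twisting parameter $c$, and reduce the general Whittaker case $m>0$ to the case $m=0$ by a Hamiltonian-reduction argument. Both sides of the asserted equivalence are factorization categories over the Ran space, so the strategy has three layers: (i) prove the $m=0$ statement first, a ``quantum orthosymplectic Satake'' refining \thmref{quantum}; (ii) realize, on the geometric side, the passage from $m=0$ to general $m$ as the Whittaker averaging functor $\on{Av}^{U_{M,N}(\bF),\chi_{M,N}}_{!}$, and on the representation-theoretic side as the quantum Drinfeld--Sokolov/Hamiltonian reduction attached to the principal nilpotent of the size-$2m$ orthogonal (resp.\ symplectic) block, built into $\chi'_{M,N}$, together with the rank-one datum $\chi''_{M,N}$ coming from the anisotropic vector; and (iii) check that both the $m=0$ equivalence and the two reduction functors are factorization functors, so that their composition produces the required equivalence, with unitality pinning down the identification.

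For step (i), the $c\to\infty$ specialization of $\on{KL}_c(\widehat\osp(2n|2n))$ (resp.\ $\widehat\osp(2n|2n-2)$) is $\Rep(\ul\sG)$, and the corresponding specialization of $D_{c^{-1}}^{\SO(N-1,\bO)}(\Gr_{\SO_N})$ is $SD^b_{\SO(N-1,\bO)}(\Gr_{\SO_N})$, so \thmref{main} (via the Koszul equivalence $\varkappa$) supplies the limiting statement; the task is to deform it flatly. I would run the argument of~\S\ref{deeq}--\S\ref{calcul ext} in the $c^{-1}\kappa_N$-twisted, loop-rotation-equivariant setting: form $\fE^\bullet_c$, the deformation of $\fE^\bullet=\Ext^\bullet_{D^{\deeq}}(E_0,E_0)$ over the twisting line, establish purity (the twisted analogue of \lemref{formality}) and hence formality, deduce a twisted analogue of \lemref{purity} giving $D^{\sG_{\bar0}}_\perf(\fE^\bullet_c)\iso D_{c^{-1}}^{\SO(N-1,\bO)\rtimes\BG_m}(\Gr_{\SO_N})$, and finally identify $\fE^\bullet_c$ on the nose with a Koszul-dual description $\fD^\bullet_c$ of $\on{KL}_c(\widehat\osp)$. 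The last identification has two inputs: a Kazhdan--Lusztig/localization statement presenting $\on{KL}_c(\widehat\osp(2n|2n))$, after Koszul duality, by the single generator $h$ of~\S\ref{calcul ext} subject to a commutation relation deforming $\fG^\bullet\iso\fE^\bullet$; and the Killing-form normalization $\Killing_{2n|2n}=-2\kappa_{2n|2n}$ of~\cite[2.7.7.(c)]{m}, which matches the level shift by $-\frac12\Killing$ with the $q$-monodromic twist along $\CL^\bullet_N$. The low-rank exceptions $N=3,4$ (and the extra twisting freedom in the $\Gr_{\SO_4}$ case, with its link to $D(2,1;\alpha)^{(1)}$ of~\remref{d21}) would be treated separately as in the proof of \lemref{formality}.

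For step (ii), on the representation side one invokes the expected compatibility of affine Kazhdan--Lusztig categories with quantum Hamiltonian reduction from the Gaiotto--Witten picture: the reduction functor is t-exact up to shift on the relevant subcategory and respects the Killing-form normalizations recorded in~\S\ref{gaiotto}. On the geometric side, $\on{Av}^{U_{M,N}(\bF),\chi_{M,N}}_{!}$ applied to the $m=0$ equivalence yields the twisted $(\SO(M,\bO)\ltimes U_{M,N}(\bF),\chi_{M,N})$-equivariant category; one checks, following~\cite[\S2]{bfgt}, that this Whittaker averaging is the geometric avatar of the same reduction, using that $\chi'_{M,N}$ is assembled from the principal character of $U_{2m}$ and $\chi''_{M,N}$ from the matrix coefficient of $v$. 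Step (iii) is then formal factorization bookkeeping.

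The main obstacle is step (i): the quantum orthosymplectic Satake at $m=0$, equivalently the determination of the deformed deequivariantized Ext-algebra $\fE^\bullet_c$ and its identification with $\fD^\bullet_c$. As the Remark after \thmref{main} already emphasizes, there is \emph{no} cellular decomposition of $\Gr_{\SO_N}$ along which all $\SO(N-1,\bO)$-equivariant IC-sheaves are smooth, so Ginzburg's theorem is available only a posteriori; here one must reprove purity in the twisted, loop-rotation-equivariant setting (via the Bialynicki--Birula argument of \lemref{formality}) and then show that the resulting graded $\BC[\hbar,c]$-algebra admits no relations beyond the expected deformation of $\fG^\bullet$. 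Compounding this, $\on{KL}_c(\widehat\osp(2n|2n))$ is a markedly non-semisimple category over an affine Lie \emph{super}algebra, so neither side is formal for weight-theoretic reasons; and the precise matching of the $q$-monodromic structure on $\CL^\bullet_N$ with the $-\frac12\Killing$ shift, plus the exceptional configurations at $N=3,4$, will require care.
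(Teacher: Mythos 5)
The statement you are addressing is Conjecture~\ref{davide}: the paper offers no proof of it, and indeed presents it (together with the whole of~\S\ref{gaiotto}) as an open conjecture of Gaiotto type. Your text is therefore not comparable to a proof in the paper; it can only be judged as a self-contained argument, and as such it is a strategy outline rather than a proof. Essentially every load-bearing step is phrased as an expectation or a task to be carried out: ``the task is to deform it flatly,'' ``one invokes the expected compatibility of affine Kazhdan--Lusztig categories with quantum Hamiltonian reduction,'' ``will require care.'' None of these is established.

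Concretely, the gaps are these. First, your step (i) requires a presentation of $\on{KL}_c(\widehat\osp(2n|2n))$ (resp.\ $\widehat\osp(2n|2n-2)$) after Koszul duality by a single generator subject to a deformed commutation relation; no such localization or Kazhdan--Lusztig-type theorem for affine orthosymplectic Lie superalgebras is available, and the paper itself only says that the relation $U_q(\osp(2k|2l))\on{-mod}\cong\on{KL}_c(\widehat\osp(2k|2l))$ ``is expected.'' Your claim that the $c\to\infty$ specialization of $\on{KL}_c$ is $\Rep(\ul\sG)$, and that Theorem~\ref{main} can then be deformed flatly in $c$, is precisely the nondegenerate-versus-degenerate chasm the conjecture is about: Theorem~\ref{main} and Theorem~\ref{quantum} live entirely on the degenerate side ($\ul\sG$, respectively its Weyl-algebra quantization $\fD^\bullet$ in the loop-rotation direction $\hbar$), whereas the conjecture concerns the nondegenerate affine superalgebra and the monodromic twist direction $c$; flatness of the two families over the twisting line and the matching of their special fibers is exactly what would have to be proved, and you give no argument. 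Second, in step (ii) the asserted compatibility of the Whittaker averaging $\on{Av}^{U_{M,N}(\bF),\chi_{M,N}}_{!}$ with quantum Drinfeld--Sokolov reduction on the $\widehat\osp$ side is again part of the conjectural Gaiotto--Witten picture, not a theorem one can invoke. What you have written is a plausible and well-informed plan of attack, consistent with the heuristics of~\cite[\S2]{bfgt}, but it does not prove Conjecture~\ref{davide}, and you should present it as a programme rather than a proof.
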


\begin{rem}
  \label{d21}
  Let $N=4,\ M=3$. Then $\SO_4\cong(\on{SL}_2\times\on{SL}_2)/\{\pm1\}$ (quotient by
  the diagonal central subgroup), so each connected component of $\Gr_{\SO_4}$ is isomorphic to
  $\Gr_{\on{SL}_2}\times\Gr_{\on{SL}_2}$. Hence the Picard group of each connected component has
  rank $2$, and we have a {\em 2-parametric} family of twistings of $D$-modules on $\Gr_{\SO_4}$.
  On the dual side, we have a family $D(2,1;\alpha)$ of deformations of $\osp(4|2)$.
  It is expected that the categories of twisted $\SO_3$-equivariant $D$-modules on $\Gr_{\SO_4}$
  are equivalent to the corresponding Kazhdan-Lusztig categories for the affine Lie superalgebras
  $D(2,1;\alpha)^{(1)}$.
  \end{rem}

\subsection{Orthosymplectic Kostka polynomials}
\label{kostka}
We will use notation and results of~\cite[Chapter 3]{m}. Recall that Borel subalgebras of
$\osp(V_0|V_1)$ containing $\fb_0\oplus\fb_1\subset\fso(V_0)\oplus\fsp(V_1)$ (notation
of~\S\ref{setup}) are parametrized by {\em shuffles}~\cite[\S3.3]{m} (certain permutations
of the set $\{1,2,\ldots,2n\}$ in the odd case~\ref{OSE}(a) (resp.\ of the set
$\{1,2,\ldots,2n-1\}$ in the even case~\ref{OSE}(b))). We will need a shuffle
\[\sigma^N=(n+1,1,n+2,2,\ldots,2n-1,n-1,2n,n)\] in the odd case and
\[\sigma^N=(1,n+1,2,n+2,\ldots,n-1,2n-1,n)\] in the even case. Note that
$\sigma^N$ is of type $D$~\cite[page 35]{m}.
The corresponding Borel subalgebra of $\osp(V_0|V_1)$ will be denoted $\fb^N$.
This is the so called {\em mixed} Borel subalgebra of~\cite[\S4]{gl}.
Its radical will be denoted by $\fn^N$.
According to~\cite[Lemma 3.3.7(c)]{m}, the odd part $\fn^N_{\bar1}$ has Cartan
eigenvalues
\begin{equation}
  \label{n odd}
  R^{N+}_{\bar1}=\{\varepsilon_i+\delta_j\}_{1\leq i,j\leq n}\cup
\{\varepsilon_i-\delta_j\}_{1\leq i<j\leq n}\cup
\{\delta_i-\varepsilon_j\}_{1\leq i\leq j\leq n}
\end{equation}
in the odd case, and
\begin{equation}
  \label{n even}
R^{N+}_{\bar1}=\{\varepsilon_i+\delta_j\}_{1\leq i\leq n}^{1\leq j<n}\cup
\{\varepsilon_i-\delta_j\}_{1\leq i\leq j<n}\cup
\{\delta_i-\varepsilon_j\}_{1\leq i<j\leq n}
\end{equation}
in the even case.
Thus $\fn^N_{\bar1}$ is a Lagrangian subspace in $V_0\otimes V_1$.
The set of simple roots of $\fn^N_{\bar1}$ is
\[\{\delta_1-\varepsilon_1,\varepsilon_1-\delta_2,\delta_2-\varepsilon_2,\ldots,
\delta_{n-1}-\varepsilon_{n-1},\varepsilon_{n-1}-\delta_n,\delta_n-\varepsilon_n,
\delta_n+\varepsilon_n\},\]
in the odd case, and
\[\{\varepsilon_1-\delta_1,\delta_1-\varepsilon_2,\varepsilon_2-\delta_2,\ldots,
\delta_{n-2}-\varepsilon_{n-1},\varepsilon_{n-1}-\delta_{n-1},\delta_{n-1}-\varepsilon_n,
\delta_{n-1}+\varepsilon_n\},\]
in the even case, cf.~\cite[Lemma 3.4.3(e)]{m}. All the simple roots are odd isotropic.

\bigskip

Given $\alpha\in\ft_0^*\oplus\ft_1^*$ we define a polynomial $L_\alpha^N(q)$
as follows: $L_\alpha^N(q):=\sum p_d^N q^d$ where
$p_d^N$ is the number of (unordered) partitions of $\alpha$ into a sum of $d$
elements of $R^{N+}_{\bar1}$.

\begin{defn}
  \label{kost}
  \textup{(a)} Given
  $\lambda_0,\mu_0\in\Lambda_0^+,\ \lambda_1,\mu_1\in\Lambda_1^+$, we define the
  orthosymplectic Kostka polynomial $K^N_{(\lambda_0,\lambda_1),(\mu_0,\mu_1)}(q)$ by the
  following Lusztig-Kato formula (cf.~\cite[(9.4)]{l},~\cite[Theorem 1.3]{k} and~\cite[(2.1)]{p}):
  \[K^N_{(\lambda_0,\lambda_1),(\mu_0,\mu_1)}(q)=\sum_{w_0\in W_0,\ w_1\in W_1}(-1)^{w_0}(-1)^{w_1}
  L^N_{(w_0(\lambda_0+\rho_0)-\rho_0-\mu_0,w_1(\lambda_1+\rho_1)-\rho_1-\mu_1)}(q),\]
  notation of~\S\ref{setup}.

  \textup{(b)} We say that
  $(\lambda_0,\lambda_1)\geq(\mu_0,\mu_1)$ if
  $(\lambda_0,\lambda_1)-(\mu_0,\mu_1)\in\BN\langle R^{N+}_{\bar1}\rangle$.
\end{defn}

In more concrete terms, recall that $\lambda_0$ is a collection of integers
$(\lambda_0^{(1)},\ldots,\lambda_0^{(n)})$ such that
$\lambda_0^{(1)}\geq\lambda_0^{(2)}\geq\ldots\geq\lambda_0^{(n-1)}\geq|\lambda_0^{(n)}|$, while
$\lambda_1$ is a partition of length $n$
$\lambda_1^{(1)}\geq\lambda_1^{(2)}\geq\ldots\geq\lambda_1^{(n)}$
in the odd case (resp.\ of length $n-1$ in the even case).
In the odd case $(\lambda_0,\lambda_1)\geq(\mu_0,\mu_1)$ if
\begin{multline}
  \label{odd order}
\lambda_1^{(1)}\geq\mu_1^{(1)},\ \lambda_1^{(1)}+\lambda_0^{(1)}\geq\mu_1^{(1)}+\mu_0^{(1)},\ldots,\\
\lambda_1^{(1)}+\lambda_0^{(1)}+\ldots+\lambda_0^{(n-1)}+\lambda_1^{(n)}\geq
\mu_1^{(1)}+\mu_0^{(1)}+\ldots+\mu_0^{(n-1)}+\mu_1^{(n)},\\
\lambda_1^{(1)}+\lambda_0^{(1)}+\ldots+\lambda_1^{(n)}+\lambda_0^{(n)}\in2\BN+
\mu_1^{(1)}+\mu_0^{(1)}+\ldots+\mu_1^{(n)}+\mu_0^{(n)},\\
\lambda_1^{(1)}+\lambda_0^{(1)}+\ldots+\lambda_1^{(n)}-\lambda_0^{(n)}\geq
\mu_1^{(1)}+\mu_0^{(1)}+\ldots+\mu_1^{(n)}-\mu_0^{(n)}.
\end{multline}
In the even case $(\lambda_0,\lambda_1)\geq(\mu_0,\mu_1)$ if
\begin{multline}
  \label{even order}
  \lambda_0^{(1)}\geq\mu_0^{(1)},\ \lambda_0^{(1)}+\lambda_1^{(1)}\geq\mu_0^{(1)}+\mu_1^{(1)},\ldots,\\
\lambda_0^{(1)}+\lambda_1^{(1)}+\ldots+\lambda_0^{(n-1)}+\lambda_1^{(n-1)}\geq
\mu_0^{(1)}+\mu_1^{(1)}+\ldots+\mu_0^{(n-1)}+\mu_1^{(n-1)},\\
\lambda_0^{(1)}+\lambda_1^{(1)}+\ldots+\lambda_1^{(n-1)}+\lambda_0^{(n)}\in2\BN+
\mu_0^{(1)}+\mu_1^{(1)}+\ldots+\mu_1^{(n-1)}+\mu_0^{(n)},\\
\lambda_0^{(1)}+\lambda_1^{(1)}+\ldots+\lambda_1^{(n-1)}-\lambda_0^{(n)}\geq
\mu_0^{(1)}+\mu_1^{(1)}+\ldots+\mu_1^{(n)}-\mu_0^{(n)}.
\end{multline}
(In both cases, the first three lines compare partial sums of the shuffled sequences
$(\la_1^{(1)},\la_0^{(1)},\dots,\la_1^{(n)},\la_0^{(n)})$ and
$(\mu_1^{(1)},\mu_0^{(1)},\dots,\mu_1^{(n)},\mu_0^{(n)})$ in the odd case, resp.\
$(\la_0^{(1)},\la_1^{(1)},\dots,\la_0^{(n-1)},\la_1^{(n-1)},\la_0^{(n)})$ and
$(\mu_0^{(1)},\mu_1^{(1)},\dots,\mu_0^{(n-1)},\mu_1^{(n-1)},\mu_0^{(n)})$ in the even case.)

\bigskip

Recall that $\fn^N_{\bar1}$ is a $B_0\times B_1$-module for the
adjoint action (here $B_0\subset\SO(V_0)$ and $B_1\subset\Sp(V_1)$ are the Borel subgroups
with Lie algebras $\fb_0\subset\fso(V_0),\ \fb_1\subset\fsp(V_1)$ respectively,
see~\S\ref{setup}). We denote by $\widetilde\CN{}^N_{\bar1}$ the associated
vector bundle over the flag variety $\CB_0\times\CB_1:=\SO(V_0)/B_0\times\Sp(V_1)/B_1$.

To a pair $(\mu_0,\mu_1)\in\Lambda_0^+\oplus\Lambda_1^+$ we associate the
$\SO(V_0)\times\Sp(V_1)$-equivariant line bundle $\CO(\mu_0,\mu_1)$ on the flag
variety $\CB_0\times\CB_1$: the action of $B_0\times B_1$ on its fiber over the point
$(B_0,B_1)\in\CB_0\times\CB_1$ is via the character $(-\mu_0,-\mu_1)$. Its global sections
$\Gamma(\CB_0\times\CB_1,\CO(\mu_0,\mu_1))$ is the irreducible $\SO(V_0)\times\Sp(V_1)$-module
$V_{\mu_0^*}\otimes V_{\mu_1^*}$ with lowest weight $(-\mu_0,-\mu_1)$. The character of
$V_{\mu_0^*}\otimes V_{\mu_1^*}$ will be denoted by $\chi_{(\mu_0^*,\mu_1^*)}$.

The pullback of $\CO(\mu_0,\mu_1)$ to $\widetilde\CN{}^N_{\bar1}$ will be also
denoted $\CO(\mu_0,\mu_1)$. We consider the graded equivariant Euler characteristics
\[\chi(\widetilde\CN{}^N_{\bar1},\CO(\mu_0,\mu_1))=
\chi(\CB_0\times\CB_1,\Sym{}\!^\bullet\ul\fn{}_{\bar1}^{N\vee}\otimes\CO(\mu_0,\mu_1)):\]
formal Taylor power series in $q$ with coefficients in the character ring of
$\SO(V_0)\times\Sp(V_1)$. Here $\ul\fn{}_{\bar1}^N$ is the sheaf of
sections of the $\SO(V_0)\times\Sp(V_1)$-equivariant vector bundle over $\CB_0\times\CB_1$
associated to the $B_0\times B_1$-module $\fn_{\bar1}^N$.
In other words, $\ul\fn{}_{\bar1}^N$ is the sheaf of sections of $\widetilde\CN{}^N_{\bar1}$
viewed as a vector bundle over $\CB_0\times\CB_1$.

\begin{prop}[D.~Panyushev]
  \label{bryl}
  We have
  \[\chi(\CB_0\times\CB_1,\Sym{}\!^\bullet\ul\fn{}_{\bar1}^{N\vee}\otimes\CO(\mu_0,\mu_1))=
  \sum_{(\lambda_0,\lambda_1)\geq(\mu_0,\mu_1)}K^N_{(\lambda_0,\lambda_1),(\mu_0,\mu_1)}(q)
  \chi_{(\lambda_0^*,\lambda_1^*)}.\]
\end{prop}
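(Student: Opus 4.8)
The plan is to evaluate the left-hand side by Borel--Weil--Bott, weight by weight, and to recognise the resulting expansion in irreducible characters as the Lusztig--Kato sum of Definition~\ref{kost}(a).

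First I would record the $B_0\times B_1$-module structure of $\Sym{}\!^\bullet\ul\fn^{N\vee}_{\bar1}$. By construction the fibre of $\widetilde\CN{}^N_{\bar1}$ over $(B_0,B_1)\in\CB_0\times\CB_1$ is the $B_0\times B_1$-module $\fn^N_{\bar1}$, whose $\ft_0\oplus\ft_1$-weights are precisely the elements of $R^{N+}_{\bar1}$ from~(\ref{n odd}) (resp.~(\ref{n even})), each occurring once (so that $\fn^N_{\bar1}$ is the Lagrangian of the paragraph preceding Definition~\ref{kost}). Since the nilradicals $\fn_0\oplus\fn_1$ act locally nilpotently on the graded algebra $\Sym{}\!^\bullet\fn^{N\vee}_{\bar1}$, in the Grothendieck group of graded $B_0\times B_1$-modules this algebra is a direct sum of one-dimensional pieces, the weight $-\alpha$ occurring with $q$-graded multiplicity $L^N_\alpha(q)$ --- which is exactly the statement that $\sum_d p^N_d(\alpha)q^d$ counts unordered ways of writing $\alpha$ as a sum of $d$ elements of $R^{N+}_{\bar1}$. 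Filtering the associated vector bundle accordingly and using additivity of the equivariant Euler characteristic along filtrations, I obtain the identity of formal power series in $q$ (finite in each $q$-degree, with coefficients in the character ring of $\SO(V_0)\times\Sp(V_1)$)
\[\chi(\CB_0\times\CB_1,\Sym{}\!^\bullet\ul\fn^{N\vee}_{\bar1}\otimes\CO(\mu_0,\mu_1))
=\sum_{\alpha\in\ft_0^*\oplus\ft_1^*}L^N_\alpha(q)\cdot\chi\big(\CB_0\times\CB_1,\CO(\mu_0+\alpha_0,\mu_1+\alpha_1)\big),\]
where for an arbitrary weight $(\eta_0,\eta_1)$ the symbol $\CO(\eta_0,\eta_1)$ denotes the equivariant line bundle on which $B_0\times B_1$ acts in the fibre over $(B_0,B_1)$ by $(-\eta_0,-\eta_1)$, extending the notation used before the Proposition.

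Next I would apply Borel--Weil--Bott on the product flag variety (equivalently, on each factor, together with the K\"unneth formula): $\chi(\CB_0\times\CB_1,\CO(\eta_0,\eta_1))$ vanishes if $\eta_0+\rho_0$ or $\eta_1+\rho_1$ lies on a wall, and otherwise equals $(-1)^{w_0}(-1)^{w_1}\chi_{(\lambda_0^*,\lambda_1^*)}$, where $w_i\in W_i$ is the unique element with $w_i(\eta_i+\rho_i)=\lambda_i+\rho_i$ dominant (consistently with $\chi(\CB_0\times\CB_1,\CO(\mu_0,\mu_1))=\chi_{(\mu_0^*,\mu_1^*)}$ for dominant $\mu_i$). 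Because each $W_i$ permutes the regular chambers simply transitively, the weights $\alpha$ for which $\mu+\alpha+\rho$ is regular are in bijection with triples $\big(w_0,w_1,(\lambda_0,\lambda_1)\big)$ via $\alpha_i=w_i^{-1}(\lambda_i+\rho_i)-\rho_i-\mu_i$, while singular $\alpha$ contribute $0$. Substituting into the displayed identity, extracting the coefficient of $\chi_{(\lambda_0^*,\lambda_1^*)}$, and relabelling each $w_i$ by $w_i^{-1}$ (which preserves the sign) yields
\[\sum_{w_0\in W_0,\ w_1\in W_1}(-1)^{w_0}(-1)^{w_1}
L^N_{(w_0(\lambda_0+\rho_0)-\rho_0-\mu_0,\ w_1(\lambda_1+\rho_1)-\rho_1-\mu_1)}(q)
=K^N_{(\lambda_0,\lambda_1),(\mu_0,\mu_1)}(q),\]
precisely the formula of Definition~\ref{kost}(a). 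Finally, $L^N_\alpha(q)=0$ unless $\alpha\in\BN\langle R^{N+}_{\bar1}\rangle$, and the $w_0=w_1=e$ summand is $L^N_{(\lambda_0-\mu_0,\lambda_1-\mu_1)}(q)$; together with the triangularity of these generalised Kostka polynomials (cf.~\cite{p}) this shows the coefficient vanishes unless $(\lambda_0,\lambda_1)\geq(\mu_0,\mu_1)$ in the sense of Definition~\ref{kost}(b), so the sum may be restricted to such $(\lambda_0,\lambda_1)$ --- which is the assertion.

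I expect the only real work to be bookkeeping: threading the dualities (the stars in $\chi_{(\lambda_0^*,\lambda_1^*)}$) through Borel--Weil--Bott so as to match the paper's normalisation $\Gamma(\CO(\mu_0,\mu_1))=V_{\mu_0^*}\otimes V_{\mu_1^*}$, and carrying the $\rho$-shifts and the two independent Weyl-group actions through the odd and even cases at once. The one input beyond Borel--Weil--Bott and elementary representation theory is the explicit weight set $R^{N+}_{\bar1}$ of the odd part $\fn^N_{\bar1}$ of the nilradical of the mixed Borel, which is exactly~\cite[Lemmas 3.3.7(c), 3.4.3(e)]{m} as recorded in~(\ref{n odd})--(\ref{n even}); everything else is formal.
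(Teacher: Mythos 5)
Your argument is correct, but it is not the route the paper takes: the paper's entire proof is the single sentence ``This is a particular case of \cite[Theorem 3.8]{p}'', i.e.\ a citation of Panyushev's general result on Euler characteristics of line bundles on homogeneous vector bundles. What you have written is essentially a self-contained re-derivation of that special case by the standard mechanism (Grothendieck-group decomposition of $\Sym{}\!^\bullet\ul\fn{}_{\bar1}^{N\vee}$ into weight lines with $q$-multiplicities $L^N_\alpha(q)$, then Borel--Weil--Bott on $\CB_0\times\CB_1$ and collection of the coefficient of $\chi_{(\lambda_0^*,\lambda_1^*)}$), which is also how Panyushev proves his theorem; so the mathematics agrees, but your version buys independence from the reference at the cost of redoing the bookkeeping. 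The only step I would ask you to make explicit is the final restriction of the sum to $(\lambda_0,\lambda_1)\geq(\mu_0,\mu_1)$: ``triangularity'' is exactly what is being proved there, so you should argue directly that every nonzero summand forces $\lambda-\mu\in\BN\langle R^{N+}_{\bar1}\rangle$. This follows because $L^N_\beta\neq0$ requires $\beta\in\BN\langle R^{N+}_{\bar1}\rangle$, while $(\lambda+\rho)-w(\lambda+\rho)$ is an $\BN$-combination of positive even roots, and every positive (even) root of $\fb_0\oplus\fb_1$ is an $\BN$-combination of the simple roots of the mixed Borel $\fb^N$, all of which are odd and lie in $R^{N+}_{\bar1}$ (the lists displayed after~(\ref{n odd})--(\ref{n even})); adding the two memberships gives $\lambda-\mu\in\BN\langle R^{N+}_{\bar1}\rangle$, i.e.\ $(\lambda_0,\lambda_1)\geq(\mu_0,\mu_1)$ in the sense of Definition~\ref{kost}(b). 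With that inserted, the proof is complete; the remaining duality and sign conventions you flag do work out as you expect (in particular $\ell(w)=\ell(w^{-1})$, and the type $D$ factor genuinely requires keeping the stars since $-1\notin W_0$ for odd $n$).
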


\begin{proof}
  This is a particular case of~\cite[Theorem 3.8]{p}.
\end{proof}

\begin{cor}
  \label{pan}
  For any $(\lambda_0,\lambda_1)\geq(\mu_0,\mu_1)$ we have
  \[K^N_{(\lambda_0,\lambda_1),(\mu_0,\mu_1)}(q)\in\BN[q].\]
\end{cor}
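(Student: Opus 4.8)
The statement asserts positivity of the orthosymplectic Kostka polynomials $K^N_{(\lambda_0,\lambda_1),(\mu_0,\mu_1)}(q)$, and the natural strategy is to extract it as a direct consequence of Proposition~\ref{bryl}, which expresses the graded character $\chi(\CB_0\times\CB_1,\Sym^\bullet\ul\fn_{\bar1}^{N\vee}\otimes\CO(\mu_0,\mu_1))$ as $\sum_{(\lambda_0,\lambda_1)\geq(\mu_0,\mu_1)}K^N_{(\lambda_0,\lambda_1),(\mu_0,\mu_1)}(q)\,\chi_{(\lambda_0^*,\lambda_1^*)}$. So the plan is: first establish that the left-hand side of Proposition~\ref{bryl} has, for each fixed power of $q$, a character that is a \emph{nonnegative} combination of irreducible characters $\chi_{(\lambda_0^*,\lambda_1^*)}$; then conclude that each coefficient $K^N_{(\lambda_0,\lambda_1),(\mu_0,\mu_1)}(q)$ — being, by linear independence of irreducible characters, precisely the multiplicity of $\chi_{(\lambda_0^*,\lambda_1^*)}$ in that graded character — lies in $\BN[q]$.

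The first step is the substantive one. The graded Euler characteristic $\chi(\CB_0\times\CB_1,\Sym^\bullet\ul\fn_{\bar1}^{N\vee}\otimes\CO(\mu_0,\mu_1))$ is an \emph{alternating} sum of cohomology groups, so a priori its character need not be a genuine nonnegative combination. What I would invoke is the relevant vanishing theorem from Panyushev's framework~\cite{p}: since $\fn_{\bar1}^N$ is (by the last paragraph of~\S\ref{kostka}, via~\cite[Lemma 3.3.7]{m}) a nilpotent $B_0\times B_1$-submodule of $V_0\otimes V_1$ with only nonnegative Cartan eigenvalues $R^{N+}_{\bar1}$, the total space $\widetilde\CN_{\bar1}^N$ is a vector bundle over the flag variety on which the line bundle $\CO(\mu_0,\mu_1)$ (pulled back from a dominant-type twist) has no higher cohomology; more precisely, $\Sym^d\ul\fn_{\bar1}^{N\vee}\otimes\CO(\mu_0,\mu_1)$ has vanishing higher cohomology on $\CB_0\times\CB_1$ for every $d$, because each $\Sym^d$ of the (dually weighted) bundle has a filtration with line-bundle subquotients $\CO(\mu_0+\sum,\mu_1+\sum)$ that remain in the dominant (Kempf-vanishing) chamber. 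Hence the Euler characteristic equals $\bigoplus_d H^0(\CB_0\times\CB_1,\Sym^d\ul\fn_{\bar1}^{N\vee}\otimes\CO(\mu_0,\mu_1))\,q^d$, an honest graded $\SO(V_0)\times\Sp(V_1)$-module, and its character is manifestly a nonnegative combination of irreducibles with $\BN$-coefficients in each $q$-degree.

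The second step is then routine bookkeeping: the irreducible characters $\{\chi_{(\lambda_0^*,\lambda_1^*)}\}$ are linearly independent over $\BZ[[q]]$ in the completed character ring, so comparing Proposition~\ref{bryl} with the module decomposition of Step 1 identifies $K^N_{(\lambda_0,\lambda_1),(\mu_0,\mu_1)}(q)$ with the graded multiplicity $\sum_d\big[H^0(\CB_0\times\CB_1,\Sym^d\ul\fn_{\bar1}^{N\vee}\otimes\CO(\mu_0,\mu_1)) : V_{\lambda_0^*}\otimes V_{\lambda_1^*}\big]\,q^d$, which is a polynomial (finitely many nonzero terms, by the standard grading/weight estimate) with nonnegative integer coefficients. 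The main obstacle is entirely in Step 1 — ensuring the cohomology-vanishing hypothesis of Panyushev's theorem genuinely applies here, i.e.\ that the mixed Borel $\fb^N$ is chosen so that $\fn^N_{\bar1}$ sits inside the nilradical direction and $\CO(\mu_0,\mu_1)$ is twisted by a dominant weight in the sense required; once that is in place (and it is, by the explicit description of $R^{N+}_{\bar1}$ and the simple roots given in~\S\ref{kostka}), the positivity is immediate. I would remark that this is the exact analogue, in the orthosymplectic setting, of the classical Kostka-Lusztig positivity via $\chi(\widetilde\calN,\CO(\mu))$, and cite~\cite[Theorem 3.8]{p} for the vanishing.
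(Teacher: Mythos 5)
Your overall strategy is the paper's: positivity is extracted from \propref{bryl} once one knows the higher cohomology vanishing $R^{>0}\Gamma(\widetilde\CN{}^N_{\bar1},\CO(\mu_0,\mu_1))=0$, which converts the alternating sum into an honest graded module. The second step of your plan (linear independence of irreducible characters, finiteness in each $q$-degree) is fine. The problem is entirely in your justification of the vanishing, and the mechanism you propose does not work.

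You claim that $\Sym^d\ul\fn{}_{\bar1}^{N\vee}\otimes\CO(\mu_0,\mu_1)$ has a line-bundle filtration whose subquotients ``remain in the dominant (Kempf-vanishing) chamber.'' This is false: the weights $R^{N+}_{\bar1}$ of $\fn^N_{\bar1}$ are positive odd roots of a \emph{mixed} Borel of $\osp(V_0|V_1)$, not sums of dominant weights of $\SO(V_0)\times\Sp(V_1)$. For instance $\delta_1-\varepsilon_1\in R^{N+}_{\bar1}$ contributes, already for $d=1$ and $\mu=0$, a subquotient of type $\CO(-\varepsilon_1,\delta_1)$, whose $\ft_0^*$-component $-\varepsilon_1$ is not dominant; similarly for $\varepsilon_i-\delta_j$. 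So there is no chamber argument available, and the vanishing for each $\Sym^d$ cannot be read off line bundle by line bundle (if it could, the whole discussion of the mixed Borel would be unnecessary). The actual proof has two inputs you are missing: (i) the canonical class of the total space $\widetilde\CN{}^N_{\bar1}$ is $\SO(V_0)\times\Sp(V_1)$-equivariantly trivial, because the sum of the elements of $R^{N+}_{\bar1}$ equals $2\rho_0+2\rho_1$ and cancels the sum of the negative roots coming from the tangent directions of $\CB_0\times\CB_1$ --- equivalently $2\rho=0$ for the mixed Borel, which holds precisely because all its simple roots are odd isotropic; and (ii) the collapsing $\widetilde\CN{}^N_{\bar1}\to V_0\otimes V_1$ is proper and birational onto the odd nilpotent cone $\CN^N_{\bar1}$ (Gruson--Leidwanger). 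Given (i) and (ii), the vanishing follows from the Kempf-collapsing argument of \cite[Theorem 3.1.(ii)]{p}; note that \cite[Theorem 3.8]{p}, which you cite ``for the vanishing,'' is the Euler-characteristic formula already recorded as \propref{bryl} and contains no vanishing statement.
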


\begin{proof}
  The desired positivity follows from the higher cohomology vanishing
  $R^{>0}\Gamma(\widetilde\CN{}^N_{\bar1},\CO(\mu_0,\mu_1))=0$.
  Note that the canonical class of $\widetilde\CN{}^N_{\bar1}$
  is $\SO(V_0)\times\Sp(V_1)$-equivariantly trivial. Indeed, a straightforward calculation
  shows that the sum of all elements of $R^{N+}_{\bar1}$
  equals $2\rho_0+2\rho_1$. But the set of Cartan eigenvalues in the
  fiber of the tangent bundle $T_{(B_0,B_1)}\CB_0\times\CB_1$ coincides with the set of
  {\em negative} roots, and they sum up to $-2\rho_0-2\rho_1$. Note that in the language of Lie
  superalgebras, the canonical class vanishing is equivalent to the equality $2\rho=0$, where
  $2\rho$ is the sum of all even roots in a mixed Borel subgroup minus the sum of all odd roots
  in this Borel subgroup. The equality $2\rho=0$ follows from the fact that all
  the simple roots of a mixed Borel subgroup are odd isotropic~\cite[Corollary 8.5.4]{m}.

  We have a proper projection \[\widetilde\CN{}^N_{\bar1}\to V_0\otimes V_1=\Pi\osp(V_0|V_1)_{\bar1}\]
  birational onto its image
  (odd nilpotent cone $\CN_{\bar1}^N$, see~\cite[Th\'eor\`eme 1]{gl}
  and~\cite[Theorem 2.3.5]{mo}\footnote{In fact, this resolution
  of the orthosymplectic odd nilpotent cone is a particular case of a general construction~\cite{h}.
  We are grateful to A.~Elashvili for this observation.}). Now the desired cohomology
  vanishing follows by the Kempf collapsing as in the proof of~\cite[Theorem 3.1.(ii)]{p}.
  \end{proof}

\begin{rem}
  In~\cite[D\'efinition 5.1)]{gl} Gruson and Leidwanger define a mixed Borel subalgebra
  in $\osp(2n+1|2n)$ (in fact, they define mixed Borel subalgebras in arbitrary orthosymplectic
  Lie superalgebras). An obvious modification of~Definition~\ref{kost} produces Kostka
  polynomials in this case (and for mixed Borel subalgebras in arbitrary orthosymplectic
  Lie superalgebras). However, the proof of positivity Corollary~\ref{pan} fails since
  $\rho\ne0$ (not all the simple roots are isotropic), cf.~\cite[Proposition 4.0.1]{mo}.
  It would be interesting to know if the positivity still holds true in this case.
\end{rem}

\begin{thm}
  \label{ic stalk}
  \textup{(a)} In the odd case~\ref{OSE}(a), an $\SO(N-1,\bO)$-orbit
  $\BO^{\mu_0}_{\mu_1}\subset\Gr_{\SO_N}$ lies in the closure of
  $\BO^{\lambda_0}_{\lambda_1}$ iff $(\lambda_0,\lambda_1)\geq(\mu_0,\mu_1)$.

  \textup{(b)} In the even case~\ref{OSE}(b), an $\SO(N-1,\bO)$-orbit
  $\BO^{\mu_1}_{\mu_0}\subset\Gr_{\SO_N}$ lies in the closure of
  $\BO^{\lambda_1}_{\lambda_0}$ iff $(\lambda_0,\lambda_1)\geq(\mu_0,\mu_1)$.

  \textup{(c)} In the odd case we have
  \[q^{-\dim\BO^{\mu_0}_{\mu_1}}K^N_{(\lambda_0,\lambda_1),(\mu_0,\mu_1)}(q^{-1})=
  \sum_i\dim(\IC^{\lambda_0}_{\lambda_1})_{\BO^{\mu_0}_{\mu_1}}^{-i}q^{-i}.\]

  \textup{(d)} In the even case we have
  \[q^{-\dim\BO^{\mu_1}_{\mu_0}}K^N_{(\lambda_0,\lambda_1),(\mu_0,\mu_1)}(q^{-1})=
  \sum_i\dim(\IC^{\lambda_1}_{\lambda_0})_{\BO^{\mu_1}_{\mu_0}}^{-i}q^{-i},\]
  (the Poincar\'e polynomials of the $\IC^{\lambda_\sfs}_{\lambda_{\sfb}}$-stalks at the orbit
  $\BO^{\mu_\sfs}_{\mu_\sfb}$).
\end{thm}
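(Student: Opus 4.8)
The plan is to derive parts (a),(b) from the closure relation of the convolution images, and parts (c),(d) by transporting the $\IC$-stalk computation, through the equivalence $\Phi$ of Theorem~\ref{main}, to the character formula of Panyushev's Proposition~\ref{bryl}; the technical bridge for the latter is purity of the sheaves $\IC^{\lambda_\sfs}_{\lambda_\sfb}$.

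\emph{Parts (a),(b).} By Corollary~\ref{small}, $\IC_{\lambda_\sfs}*\IC_{\lambda_\sfb}=\IC^{\lambda_\sfs}_{\lambda_\sfb}$, so the proper convolution morphism $\bm\colon\ol\Gr{}^{\lambda_\sfs}_{\SO_{N-1}}\wtimes\ol\Gr{}^{\lambda_\sfb}_{\SO_N}\to\Gr_{\SO_N}$ has irreducible image with dense open subset $\BO^{\lambda_\sfs}_{\lambda_\sfb}$, hence $\on{Im}\bm=\ol\BO{}^{\lambda_\sfs}_{\lambda_\sfb}$ set-theoretically; thus $\BO^{\mu_\sfs}_{\mu_\sfb}\subseteq\ol\BO{}^{\lambda_\sfs}_{\lambda_\sfb}$ iff the orbit representative of $\BO^{\mu_\sfs}_{\mu_\sfb}$ (the pair $(L'_\bmu,L_\bnu)$ of Lemma~\ref{connected}, with $\bmu\leftrightarrow\mu_\sfs$, $\bnu\leftrightarrow\mu_\sfb$) lies in $\on{Im}\bm$. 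I would first settle this in the ambient $\Gr_{\GL_N}$, where $\on{Im}(\bm_{\GL_N})=\ol\BO_{\bmu,\bnu}$ is the closure of a single $\GL(N,\bF)$-orbit of $\Gr_{\GL_N}\times\Gr_{\GL_N}$ intersected with $\Gr_{\GL_{N-1}}\times\Gr_{\GL_N}$ (as recalled in the proof of Lemma~\ref{connected}), hence is cut out by the ordinary dominance order on the shuffled sequence of $(\bmu,\bnu)$; then I would restrict along the involution $\varsigma$ with $\SO_M=(\GL_M)^{\varsigma,0}$ of~\S\ref{orb}, keeping track of the two connected components of the fixed-point loci. The resulting conditions are precisely the comparisons of partial sums of the shuffled sequences together with the ``type $D$'' parity and sign-flipped last inequality, i.e.~(\ref{odd order}) (resp.~(\ref{even order})), which is the relation $(\lambda_0,\lambda_1)\geq(\mu_0,\mu_1)$ of Definition~\ref{kost}(b); the only real work is the bookkeeping of the two components (the ``sequences, not signatures'' of Lemma~\ref{connected}). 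Alternatively (a),(b) follow from (c),(d) together with the nonvanishing $K^N_{(\lambda_0,\lambda_1),(\mu_0,\mu_1)}\ne0$ for $(\lambda_0,\lambda_1)\geq(\mu_0,\mu_1)$, which one reads off from Proposition~\ref{bryl} and a leading-term computation.

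\emph{Parts (c),(d).} By Corollary~\ref{small} and the ``minuscule generates'' reduction used in the proof of Lemma~\ref{formality}, each $\IC^{\lambda_\sfs}_{\lambda_\sfb}$ is a direct summand of $\bm_*\underline\BC[\dim]$ for a proper $\bm$ with smooth projective source, hence pure; with connectedness of the orbit stabilizers (Lemma~\ref{connected}) this forces the restriction of $\IC^{\lambda_\sfs}_{\lambda_\sfb}$ to $\BO^{\mu_\sfs}_{\mu_\sfb}$ to be a direct sum of shifts of the constant sheaf, so it suffices to compute its Poincar\'e polynomial $\sum_i\dim(\IC^{\lambda_\sfs}_{\lambda_\sfb})^{-i}_{\BO^{\mu_\sfs}_{\mu_\sfb}}q^{-i}$. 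Under $\Phi\circ\varkappa$ the simple $\IC^{\mu_\sfs}_{\mu_\sfb}$ corresponds to the irreducible $\ul\sG$-module $V_{\mu_\sfs}\otimes V_{\mu_\sfb}$ (Corollary~\ref{small}), and for the mixed Borel $\fb^N$ of~\S\ref{kostka} the category $\Rep(\ul\sG)$ is a highest-weight category with order $\geq$ whose (co)standard characters are given by the Euler characteristic on the bundle $\widetilde\CN{}^N_{\bar1}\to\CB_0\times\CB_1$; by BGG reciprocity and Proposition~\ref{bryl} — whose higher cohomology vanishes by Corollary~\ref{pan}, so it gives honest graded characters — its graded decomposition numbers are $K^N_{(\lambda_0,\lambda_1),(\mu_0,\mu_1)}(q)$. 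Since the deequivariantized Ext-algebra is $\fE^\bullet\cong\fG^\bullet=\Sym(\Pi(V_0\otimes V_1)[-1])$ (Lemma~\ref{phiso}), a Koszul algebra, and the relevant $\RHom$'s are formal (Lemma~\ref{formality}), a Beilinson--Ginzburg--Soergel-type Koszul duality should identify these graded decomposition numbers with the $q$-stalk multiplicities of the $\IC^{\lambda_\sfs}_{\lambda_\sfb}$ along the orbit stratification, the substitution $q\mapsto q^{-1}$ and the factor $q^{-\dim\BO^{\mu_\sfs}_{\mu_\sfb}}$ in the statement accounting for the perverse shift of $\IC^{\lambda_\sfs}_{\lambda_\sfb}$ and the stalk-versus-costalk Verdier flip. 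The hard part is exactly this last identification: matching the cohomological (perverse) grading of the stalks with the $\Sym^\bullet$-grading of Panyushev's computation, equivalently showing that $\Phi$ intertwines ``stalk at $\BO^{\mu_\sfs}_{\mu_\sfb}$, shifted by $\dim\BO^{\mu_\sfs}_{\mu_\sfb}$'' with the correct section functor along the Kempf collapsing $\widetilde\CN{}^N_{\bar1}\to\CN^N_{\bar1}$ of the odd nilpotent cone (Lemma~\ref{weyl}(a)); I expect this to require a transverse-slice / equivariant-localization analysis of $\Gr_{\SO_N}$ along the orbits $\BO^{\mu_\sfs}_{\mu_\sfb}$, in the spirit of Lusztig's computation of $\IC$-stalks on affine Grassmannians, after which everything else is formal given Theorem~\ref{main} and Propositions~\ref{kostant} and~\ref{bryl}.
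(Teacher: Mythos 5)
Your outline for (a),(b) follows the paper's general strategy (reduce to the adjacency order of $\GL(N-1,\bO)$-orbits in $\Gr_{\GL_N}$ and restrict along the involution $\varsigma$), but it misses the one genuinely nontrivial point there. The embedding of an $\SO(N-1,\bO)$-orbit into a $\GL(N-1,\bO)$-orbit only remembers $|\mu_0^{(n)}|$ (resp.\ $|\nu_n|$) --- the sign of the last coordinate is lost --- so the $\GL_N$ dominance order yields only the first three lines of~(\ref{odd order})/(\ref{even order}) and cannot see the final, sign-sensitive inequality. ``Bookkeeping of the two components'' does not by itself recover it: the paper needs a separate trick, namely convolving both $\ol\BO{}^{\mu_0}_{\mu_1}\subset\ol\BO{}^{\lambda_0}_{\lambda_1}$ with $\ol\Gr{}^{\nu_0}_{\SO_{N-1}}$ for an \emph{arbitrary} dominant $\nu_0$ and observing that the first three lines for all the shifted pairs $(|\nu_0+\lambda_0|,\lambda_1),(|\nu_0+\mu_0|,\mu_1)$ force the last line for $(\lambda_0,\lambda_1),(\mu_0,\mu_1)$. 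Also note that this argument only gives the ``only if'' direction; the ``if'' direction is obtained in the paper from part (c) together with the nonvanishing $K^N_{(\lambda_0,\lambda_1),(\mu_0,\mu_1)}\ne0$, so your ``alternative'' route for (a),(b) is in fact the only route used for half of the statement --- and it presupposes (c).

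For (c),(d) your proposal is not yet a proof: you explicitly defer the central identification (matching the perverse grading of stalks with Panyushev's $\Sym^\bullet$-grading) to an unspecified ``transverse-slice / equivariant-localization analysis,'' and the paper's own remark after Theorem~\ref{main} warns that no cellular decomposition of $\Gr_{\SO_N}$ compatible with all the $\SO(N-1,\bO)$-equivariant IC-sheaves exists, so a Lusztig-style localization is exactly what is \emph{not} available here. The paper's actual mechanism is different and entirely on the coherent side: it introduces the costandard object $C^{\mu_0}_{\mu_1}=\bp_*\ul\BC{}_{\SO(N-1,\bO)}$, whose $\Hom^\bullet$ from $\IC^{\lambda_0}_{\lambda_1}$ is dual to the stalk in question, characterizes its isomorphism class by two properties (its $\Hom$'s out of simples, and the subcategory it generates), and then verifies these properties for $\CalC^{\mu_0}_{\mu_1}=\Gamma(\widetilde\CN{}^N_{\bar1},\CO(\mu_0,\mu_1))$, the key computation being $\on{Tor}^{\fG^\bullet}(\CalC^{\mu_0}_{\mu_1},\BC)\cong R\Gamma\bigl(\CB_0\times\CB_1,\Lambda^\bullet(\ul\fn{}_{\bar1}^N[1])\otimes\CO(\mu_0,\mu_1)\bigr)$ via base change along the Koszul resolution, using that $\ul\fn{}^N_{\bar1}$ is Lagrangian in $\CV_0\otimes\CV_1$. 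After that, Proposition~\ref{bryl} gives the stalk Poincar\'e polynomial directly; no highest-weight structure, BGG reciprocity, or Beilinson--Ginzburg--Soergel Koszul duality is invoked. If you want to complete your argument, the missing ingredient is precisely this characterization-and-matching of $C^{\mu_0}_{\mu_1}$ with $\CalC^{\mu_0}_{\mu_1}$ (note also that its verification uses the ``only if'' direction of (a), so the logical order only-if~(a) $\Rightarrow$ (c) $\Rightarrow$ if~(a) must be respected).
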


\begin{proof}
(a) We first prove that if $\BO^{\mu_0}_{\mu_1}\subset\ol\BO{}^{\lambda_0}_{\lambda_1}$ then
  $(\lambda_0,\lambda_1)\geq(\mu_0,\mu_1)$. We view $\BO^{\mu_0}_{\mu_1},\BO^{\lambda_0}_{\lambda_1}$
  as connected components of the fixed point sets of involution $\varsigma$ of the
  corresponding $\GL(N-1,\bO)$-orbits in $\Gr_{\GL_N}$ as in the proofs
  of~Lemmas~\ref{sake?},\ref{connected}. Recall that the set of $\GL(N-1,\bO)$-orbits in
  $\Gr_{\GL_N}$ is parametrized by bisignatures \[(\btheta_0,\btheta_1)=
  (\theta_0^{(1)}\geq\ldots\geq\theta_0^{(N-1)},\theta_1^{(1)}\geq\ldots\geq\theta_1^{(N)}),\]
  and the adjacency order on orbits is given on bisignatures by
  $(\btheta_0,\btheta_1)\geq(\bzeta_0,\bzeta_1)$ if
  \[\theta_1^{(1)}\geq\zeta_1^{(1)},\ \theta_1^{(1)}+\theta_0^{(1)}\geq\zeta_1^{(1)}+\zeta_0^{(1)},\
  \theta_1^{(1)}+\theta_0^{(1)}+\theta_1^{(2)}\geq\zeta_1^{(1)}+\zeta_0^{(1)}+\zeta_1^{(2)},\ldots,\]
  \[\theta_1^{(1)}+\theta_0^{(1)}+\ldots+\theta_1^{(N-1)}+\theta_0^{(N-1)}\geq
  \zeta_1^{(1)}+\zeta_0^{(1)}+\ldots+\zeta_1^{(N-1)}+\zeta_0^{(N-1)},\]
  \[\theta_1^{(1)}+\theta_0^{(1)}+\ldots+\theta_0^{(N-1)}+\theta_1^{(N)}=
  \zeta_1^{(1)}+\zeta_0^{(1)}+\ldots+\zeta_0^{(N-1)}+\zeta_1^{(N)}.\]
  Indeed, the similar description of the adjacency order on the set of $\GL(N,\bO)$-orbits in the
  mirabolic Grassmannian is given in~\cite[Proposition 12]{fgt} as a corollary
  of~\cite[Theorem~3.9]{ah}. The desired description of the adjacency order on the set of
  $\GL(N-1,\bO)$-orbits in $\Gr_{\GL_N}$ follows by the arguments of~\cite[\S4.4]{bfgt}.

  Now if $\BO^{\mu_0}_{\mu_1}\subset\ol\BO{}^{\lambda_0}_{\lambda_1}$, then the $\GL(N-1,\bO)$-orbit
  in $\Gr_{\GL_N}$ containing $\BO^{\mu_0}_{\mu_1}$ (note that it depends only on the bipartition
  $(|\mu_0|:=(\mu_0^{(1)}\geq\ldots\geq\mu_0^{(n-1)}\geq|\mu_0^{(n)}|),\ \mu_1)$)
  lies in the closure of the $\GL(N-1,\bO)$-orbit
  in $\Gr_{\GL_N}$ containing $\BO^{\lambda_0}_{\lambda_1}$.
  This implies the first three lines of inequalities~(\ref{odd order}) for $(|\la_0|,\la_1)$ and
  $(|\mu_0|,\mu_1)$. The following trick takes
  care of the last inequality of~(\ref{odd order}). Take any dominant coweight
  $\nu_0=(\nu_0^{(1)},\ldots,\nu_0^{(n)})$ (such that
  $\nu_0^{(1)}\geq\nu_0^{(2)}\geq\ldots\geq\nu_0^{(n-1)}\geq|\nu_0^{(n)}|$) of $\SO_{N-1}$.
  Consider the corresponding convolution
  $\bm(\ol\Gr{}_{\SO_{N-1}}^{\nu_0\mathstrut}\wtimes\ol\BO{}^{\lambda_0}_{\lambda_1})
  =\ol\BO{}^{\nu_0+\lambda_0}_{\lambda_1}$. Since
  $\ol\BO{}^{\mu_0}_{\mu_1}\subset\ol\BO{}^{\lambda_0}_{\lambda_1}$, applying convolution with
  $\ol\Gr{}_{\SO_{N-1}}^{\nu_0}$ to both sides, we deduce
  $\ol\BO{}^{\nu_0+\mu_0}_{\mu_1}\subset\ol\BO{}^{\nu_0+\lambda_0}_{\lambda_1}$. But the first three lines
  of~(\ref{odd order}) for $(|\nu_0+\lambda_0|,\lambda_1),(|\nu_0+\mu_0|,\mu_1)$ with arbitrary
  $\nu_0$ imply the last two (and thus all) lines of~(\ref{odd order}) for
  $(\lambda_0,\lambda_1),(\mu_0,\mu_1)$.

  This completes the proof of the `only if' direction of (a). The proof of the `only if' direction
  of (b) is entirely similar. We will return to the proof of the `if' directions of (a,b) after the
  proof of (c,d).

  \medskip

  (c) We choose a base point in an orbit $\BO^{\mu_0}_{\mu_1}$ (e.g.\ the one supplied
  in the proofs of~Lemmas~\ref{sake?},\ref{connected}) and denote by
  $\bp\colon\SO(N-1,\bO)\to\BO^{\mu_0}_{\mu_1}$ the corresponding action morphism. We denote by
  $C^{\mu_0}_{\mu_1}$ the direct image of the constant sheaf $\bp_*\ul\BC{}_{\SO(N-1,\bO)}$.
  Note that the action of $\SO(N-1,\bO)$ on $\BO^{\mu_0}_{\mu_1}$ factors through the quotient
  $\SO(N-1,\bO)/U$ by a normal unipotent subgroup of finite codimension, so that $\bp$
  factors through $\bp'\colon\SO(N-1,\bO)/U\to\BO^{\mu_0}_{\mu_1}$, and the rigorous definition
  of $C^{\mu_0}_{\mu_1}$ is $\bp'_*\ul\BC{}_{\SO(N-1,\bO)/U}$. It is canonically independent of the
  choice of $U$, hence our notation $\bp_*\ul\BC{}_{\SO(N-1,\bO)}$.

  Now $\Hom^\bullet_{D^b_{\SO(N-1,\bO)}(\Gr_{\SO_N})}(\IC^{\lambda_0}_{\lambda_1},C^{\mu_0}_{\mu_1})$ is canonically
  dual to the stalk of $\IC^{\lambda_0}_{\lambda_1}$ at the orbit $\BO^{\mu_0}_{\mu_1}$. So it suffices
  to prove that under the equivalence $\Phi$ of~Theorem~\ref{main} we have
  $\Phi(\CalC^{\mu_0}_{\mu_1})\simeq C^{\mu_0}_{\mu_1}$, where
  $\CalC^{\mu_0}_{\mu_1}\in D^{\sG_{\bar0}}_\perf(\fG^\bullet)$ is the following dg-module.
It is equal to the global sections $\Gamma(\widetilde\CN{}^N_{\bar1},\CO(\mu_0,\mu_1))$
(cf.\ the proof of~Corollary~\ref{pan}) equipped with the trivial differential and the grading
coming from the dilation action of $\BC^\times$ on $\widetilde\CN{}^N_{\bar1}$ and the natural
$\BC^\times$-equivariant structure of the line bundle $\CO(\mu_0,\mu_1)$ on
$\widetilde\CN{}^N_{\bar1}$. The $\fG^\bullet$-module structure comes from the natural
$\BC[V_0\otimes V_1]$-module structure on $\Gamma(\widetilde\CN{}^N_{\bar1},\CO(\mu_0,\mu_1))$
and the above grading.

The isomorphism class of $C^{\mu_0}_{\mu_1}$ is uniquely characterized by the following properties:

(i) If $\Hom^\bullet_{D^b_{\SO(N-1,\bO)}(\Gr_{\SO_N})}(\IC^{\lambda_0}_{\lambda_1},C^{\mu_0}_{\mu_1})\ne0$, then
$\BO^{\mu_0}_{\mu_1}\subset\ol\BO{}^{\lambda_0}_{\lambda_1}$, and
$\Hom^\bullet_{D^b_{\SO(N-1,\bO)}(\Gr_{\SO_N})}(\IC^{\mu_0}_{\mu_1},C^{\mu_0}_{\mu_1})=\BC[-\dim\BO^{\mu_0}_{\mu_1}]$;

(ii) $C^{\mu_0}_{\mu_1}$ lies in the triangulated subcategory of $D^b_{\SO(N-1,\bO)}(\Gr_{\SO_N})$
generated by $\{\IC^{\nu_0}_{\nu_1}\}$ for pairs $(\nu_0,\nu_1)$ such that
$\BO^{\nu_0}_{\nu_1}\subset\ol\BO{}^{\mu_0}_{\mu_1}$.

So we have to check the corresponding properties of $\CalC^{\mu_0}_{\mu_1}$. Due to the `only if'
direction of part (a) proved above, we may replace the closure relations by the inequalities
$(\lambda_0,\lambda_1)\geq(\mu_0,\mu_1)$ in (i) (resp.\ $(\nu_0,\nu_1)\leq(\mu_0,\mu_1)$ in (ii)).
To check (ii) we consider the $\sG_{\bar0}$-module $(\CalC^{\mu_0}_{\mu_1})_0$ equal to
$\on{Tor}^{\fG^\bullet}(\CalC^{\mu_0}_{\mu_1},\BC)$, where $\BC$ is the quotient of $\fG^\bullet$
modulo the augmentation ideal. We have to
verify that if an irreducible $\sG_{\bar0}$-module $V_{\nu_0}\otimes V_{\nu_1}$ enters
$(\CalC^{\mu_0}_{\mu_1})_0$ with nonzero
multiplicity, then $(\nu_0,\nu_1)\leq(\mu_0,\mu_1)$. We apply the base
change~\cite[Proposition 2.5.14]{dag} for the Cartesian square
\[\begin{CD}
\widetilde{\CB_0\times\CB_1} @>>> \widetilde\CN{}^N_{\bar1}\\
@VVV @VVV\\
0 @>>> V_0\otimes V_1.
\end{CD}\]
Here $\widetilde{\CB_0\times\CB_1}$ is a derived scheme supported at the zero section of
$\widetilde\CN{}^N_{\bar1}$ with the structure sheaf
$\Lambda^\bullet\left((\CV_0\otimes\CV_1)/\ul\fn{}_{\bar1}^N[-1]\right)^\vee$,
where $\CV_0\otimes\CV_1$ is the trivial vector bundle on $\CB_0\times\CB_1$ with
fiber $V_0\otimes V_1$.
It follows that the $\sG_{\bar0}$-module $(\CalC^{\mu_0}_{\mu_1})_0$ equals
$R\Gamma(\CB_0\times\CB_1,\Lambda^\bullet(\ul\fn{}_{\bar1}^N[1])\otimes\CO(\mu_0,\mu_1))$
(here the exterior algebra of the shifted vector bundle denotes a finite dimensional
algebra as opposed to the symmetric one).
Indeed, the vector bundle $\ul\fn{}_{\bar1}^N$ over $\CB_0\times\CB_1$ is by
construction embedded into the trivial vector bundle $\CV_0\otimes\CV_1$ as a
Lagrangian subbundle, so the quotient
$(\CV_0\otimes\CV_1)/\ul\fn{}_{\bar1}^N$ is canonically
identified with the dual vector bundle $\ul\fn{}_{\bar1}^{N\vee}$.

Now the verification of (i,ii) is the same as the one for steps (i,ii) of the proof
of~\cite[Theorem 5.4]{p}. This completes the proof of (c). The proof of (d) is entirely similar.
Finally, we return to the proof of the `if' direction of (a,b). The arguments in the odd and even
cases being similar, we consider the odd case only.

Since the stalks of $\IC^{\lambda_0}_{\lambda_1}$ do not vanish precisely at the orbits
$\BO^{\mu_0}_{\mu_1}$ lying in the closure of $\BO^{\lambda_0}_{\lambda_1}$, and the stalks are known
by (c), it remains to check that $K^N_{(\lambda_0,\lambda_1),(\mu_0,\mu_1)}\ne0$ if
$(\lambda_0,\lambda_1)\geq(\mu_0,\mu_1)$. From~Definition~\ref{kost} it is easy to see that
the summand of $L^N_{(\lambda_0-\mu_0,\lambda_1-\mu_1)}$ of highest degree (corresponding to the
decomposition of $(\lambda_0,\lambda_1)-(\mu_0,\mu_1)$ into the sum of simple roots) cannot be
cancelled by any other summands in the definition of $K^N_{(\lambda_0,\lambda_1),(\mu_0,\mu_1)}$.
Thus we conclude that $K^N_{(\lambda_0,\lambda_1),(\mu_0,\mu_1)}\ne0$.

The theorem is proved.
\end{proof}


\begin{thebibliography}{XXXXXX}

\bibitem[AH]{ah} P.~Achar, A.~Henderson, {\em Orbit closures in the enhanced nilpotent cone},
  Adv.\ Math.\ {\bf 219} (2008), no.~1, 27--62; {\em Corrigendum}, Adv.\ Math.\ {\bf 228}
  (2011), no.~5, 2984--2988.

\bibitem[AGa]{aga} D.~Arinkin, D.~Gaitsgory, {\em Singular support of coherent sheaves and the
  geometric Langlands conjecture}, Selecta Math.\ (N.S.) {\bf 21} (2015), no.~1, 1--199.

\bibitem[AG]{ag} S.~Arkhipov, D.~Gaitsgory, {\em Another realization of the category of
  modules over the small quantum group}, Adv.\ Math.\ {\bf 173} (2003), no.~1, 114--143.

\bibitem[B]{b} A.~Berezhnoy, {\em Classification of linear maps of spaces with a scalar
  product}, Izvestiya.\ Mathematics, to appear.

\bibitem[BF]{bf} R.~Bezrukavnikov, M.~Finkelberg, {\em Equivariant Satake category and
  Kostant-Whittaker reduction}, Moscow Math.\ Journal {\bf 8} (2008), no.~1, 39--72.

\bibitem[BP]{bp} P.~Bravi, G.~Pezzini, {\em The spherical systems of the wonderful
  reductive subgroups}, J.\ Lie Theory {\bf 25} (2015), no.~1, 105--123.

\bibitem[BrF]{br} A.~Braverman, M.~Finkelberg, {\em A quasi-coherent description of
  the category $D\operatorname{-mod}(\Gr_{\GL(n)})$},
  ``Representation Theory and Algebraic Geometry'',
Trends in Mathematics, Birkh\"auser (2022), 133--149.

\bibitem[BFGT]{bfgt} A.~Braverman, M.~Finkelberg, V.~Ginzburg, R.~Travkin,
  {\em Mirabolic Satake equivalence and supergroups},
Compos.\ Math.\ {\bf 157} (2021), no.~8, 1724--1765.

\bibitem[FGT]{fgt} M.~Finkelberg, V.~Ginzburg, R.~Travkin,
  {\em Mirabolic affine Grassmannian and character sheaves},
  Selecta Math.\ (N.S.) {\bf 14} (2009), no.~3-4, 607--628.


\bibitem[Ga]{1-aff} D.~Gaitsgory, {\em Sheaves of categories and the notion of 1-affineness},
  Contemp.\ Math.\ {\bf 643} (2015), 127--225.

\bibitem[G1]{g1} V.~Ginzburg, {\em Perverse sheaves on a loop group and Langlands duality},
  \arXiv{alg-geom/9511007}.

\bibitem[G2]{g2} V.~Ginsburg, {\em Perverse sheaves and $\BC^*$-actions},
  J.\ Amer.\ Math.\ Soc.\ {\bf 4} (1991), no.~3, 483--490.

\bibitem[GL]{gl} C.~Gruson, S.~Leidwanger, {\em C\^ones nilpotentes des super alg\`ebres
  de Lie orthosymplectiques}, Ann.\ Math.\ Blaise Pascal {\bf 17} (2010), no.~2, 303--326.

\bibitem[GN]{gn} D.~Gaitsgory, D.~Nadler, {\em Spherical varieties and Langlands duality},
  Mosc.\ Math.\ J.\ {\bf 10} (2010), no.~1, 65--137.

\bibitem[H]{h} W.~H.~Hesselink, {\em Desingularizations of Varieties of Nullforms},
  Invent.\ Math.\ {\bf 55} (1979), 141--163.

\bibitem[K]{k} S.-I.~Kato, {\em Spherical functions and a $q$-analogue of Kostant's weight
  multiplicity formula}, Inv.\ Math.\ {\bf 66} (1982), 461--468.

\bibitem[KP]{kp} H.~Kraft, C.~Procesi, {\em On the geometry of conjugacy classes in
  classical groups}, Comment.\ Math.\ Helv.\ {\bf 57} (1982), 539--602.
  
\bibitem[LV]{lv} D.~Luna, Th.~Vust, {\em Plongements d'espaces homog\`enes},
  Comment.\ Math.\ Helv.\ {\bf 58} (1983), no.~2, 186--245.

\bibitem[Lur]{dag} J.~Lurie, {\em Derived Algebraic Geometry VIII: Quasi-Coherent Sheaves and
  Tannaka Duality Theorems}, \href{https://www.math.ias.edu/~lurie/papers/DAG-VIII.pdf}{\tt https://www.math.ias.edu/\string~lurie/papers/DAG-VIII.pdf}

\bibitem[Lus]{l} G.~Lusztig, {\em Singularities, character formulas, and a $q$-analogue
  of weight multiplicities}, Ast\'erisque {\bf 101-102} (1983), 208--227.


\bibitem[Mo]{mo} I.~Motorin, {\em Resolution of singularities of the odd nilpotent cone
  of orthosymplectic Lie superalgebras}, \arXiv{2107.10007}.  
  
\bibitem[Mu]{m} I.~Musson, {\em Lie superalgebras and enveloping algebras},
  Graduate Studies in Mathematics {\bf 131}, AMS, Providence, RI (2012), xx+488pp.

\bibitem[P]{p} D.~Panyushev, {\em Generalised Kostka-Foulkes polynomials and cohomology
  of line bundles on homogeneous vector bundles}, Selecta Math.\ (N.S.) {\bf 16} (2010),
  no.~2, 315--342.

\bibitem[PV]{pv} V.~L.~Popov, E.~B.~Vinberg, {\em Invariant Theory},
  Encyclop{\ae}dia of Mathematical Sciences {\bf 55} Algebraic Geometry IV, Springer Verlag
  (1994), 123--278.

\bibitem[S]{sa} Y.~Sakellaridis, {\em Spherical functions on spherical varieties},
  Amer.\ J.\ Math.\ {\bf 135} (2013), no.~5, 1291--1381.

\bibitem[W]{w} H.~Weyl, {\em The classical groups, their invariants and representations},
  Princeton Landmarks in Mathematics, Princeton University Press, Princeton, NJ (1997),
  15th printing, Princeton Paperbacks.

  \end{thebibliography}
\end{document}